\urldef{\urluni}{\url}{http://www.mathematik.uni-kl.de/fuana/}
\urldef{\emailgrothaus}{\url}{grothaus@mathematik.uni-kl.de}
\urldef{\emailvosshall}{\url}{vosshall@mathematik.uni-kl.de}
\makeatletter\@addtoreset{equation}{section}\makeatother
\theoremstyle{plain}      \newtheorem{theorem}{Theorem}[section]
                          \newtheorem{corollary}[theorem]{Corollary}
                          \newtheorem{proposition}[theorem]{Proposition}
													\newtheorem{condition}[theorem]{Condition}
\theoremstyle{remark}     \newtheorem{remark}[theorem]{Remark}
                          \newtheorem{lemma}[theorem]{Lemma}
													\newtheorem{example}[theorem]{Example}
\theoremstyle{definition} \newtheorem{definition}[theorem]{Definition}
\begin{document} 

\newcommand{\grad}{\nabla}
\newcommand{\D}{\partial}
\newcommand{\E}{\mathcal{E}}
\newcommand{\N}{\mathbb{N}}
\newcommand{\R}{\mathbb{R}_{\scriptscriptstyle{\ge 0}}}
\newcommand{\dom}{\mathcal{D}}
\newcommand{\ess}{\operatorname{ess~inf}}
\newcommand{\cem}{\operatorname{\text{\ding{61}}}}
\newcommand{\supp}{\operatorname{\text{supp}}}
\newcommand{\ca}{\operatorname{\text{cap}}}

\setenumerate[1]{label=(\roman*)}       
\setenumerate[2]{label=(\alph*)}

\begin{titlepage}
\title{\Large Construction and analysis of sticky reflected diffusions}
\author{\normalsize\sc Martin Grothaus \footnote{University of Kaiserslautern, P.O.Box 3049, 67653
Kaiserslautern, Germany.} \thanks{\urluni}~\thanks{\emailgrothaus}
 \and \normalsize\sc Robert Voßhall \footnotemark[2]~\thanks{\emailvosshall}}
\date{\small \today}
\end{titlepage}
\maketitle

\pagestyle{headings}

\begin{abstract}
We give a Dirichlet form approach for the construction of distorted Brownian motion in a bounded domain $\Omega$ of $\mathbb{R}^d$, $d \geq 1$, with boundary $\Gamma$, where the behavior at the boundary is sticky. The construction covers the case of a static boundary behavior as well as the case of a diffusion on the hypersurface $\Gamma$ (for $d \geq 2)$. More precisely, we consider the state space $\overline{\Omega}=\Omega \stackrel{.}{\cup} \Gamma$, the process is a diffusion process inside $\Omega$, the occupation time of the process on the boundary $\Gamma$ is positive and the process may diffuse on $\Gamma$ as long as it sticks on the boundary. The problem is formulated in an $L^2$-setting and the construction is formulated under weak assumptions on the coefficients and $\Gamma$. In order to analyze the process we assume a $C^2$-boundary and some weak differentiability conditions. In this case, we deduce that the process is also a solution to a given SDE for quasi every starting point in $\overline{\Omega}$ with respect to the underyling Dirichlet form. Under the addtional condition that $\{ \varrho =0 \}$ is of capacity zero, we prove ergodicity of the constructed process and consequently, we verify that the boundary behavior is indeed sticky. Moreover, we show ($\mathcal{L}^p$-)strong Feller properties which allow to characterize the constructed process even for every starting point in $\overline{\Omega} \backslash \{ \varrho=0\}$.
\\\\
\thanks{\textbf{Mathematics Subject Classification 2010}. \textit{60J50, 60J60, 58J65, 31C25, 35J25}}\\
\thanks{\textbf{Keywords}: \textit{sticky reflected diffusions, boundary diffusions, Brownian motion on manifolds, general Wentzell boundary conditions, strong Feller properties}}
\end{abstract}

\section{Introduction}

We construct via Dirichlet form techniques diffusions on $\overline{\Omega}$ for bounded domains $\Omega$ of $\mathbb{R}^d$, $d \geq 1$, with boundary $\Gamma$ of Lebesgue measure zero, and identify them as weak solutions of
\begin{align}
d\mathbf{X}_t =& \mathbbm{1}_{\Omega} (\mathbf{X}_t) \Big( dB_t + \frac{1}{2} \nabla \ln \alpha (\mathbf{X}_t) dt \Big) - \mathbbm{1}_{\Gamma}(\mathbf{X}_t) \frac{\alpha}{\beta}(\mathbf{X}_t) ~n(\mathbf{X}_t) dt  \notag \\ 
&+ \delta ~ \mathbbm{1}_{\Gamma}(\mathbf{X}_t) \Big( dB_t^{\Gamma} + \frac{1}{2} \nabla_{\Gamma} \ln \beta (\mathbf{X}_t) dt \Big), \label{SDE} \\
dB_t^{\Gamma} =& P(\mathbf{X}_t) \circ dB_t, \notag \\
\mathbf{X}_0 =& x, \notag
\end{align}
for $x \in \overline{\Omega}$ under weak assumptions on the drifts given by $\alpha$ and $\beta$, where $n(y)$ is the outward normal at $y \in \Gamma$ and $\delta \in \{0,1\}$. In the case $\delta=1$ we additionally assume that $d \geq 2$.\\
A solution to (\ref{SDE}) can be charaterized as Brownian motion with drift inside $\Omega$ and if the process reaches $\Gamma$, Brownian motion with drift along $\Gamma$ may take place, while a further drift term is directed back into the interior of $\Omega$. In addition, the Brownian motion $B^{\Gamma}=(B_t^{\Gamma})_{t \geq 0}$ on $\Gamma$ is the projection of the $d$-dimensional Brownian motion $B=(B_t)_{t \geq 0}$ onto the Riemannian manifold $\Gamma$ (in the sense of a Stratonovich SDE). In this situation, the boundary behavior is called sticky and is connected to so-called Wentzell boundary conditions. In contrast to reflecting (Neumann) boundary conditions which provide an immediate reflection, Wentzell boundary conditions yield sojourn on $\Gamma$. The infinitesimal generator and semigroup associated to such kind of diffusions were first investigated in \cite{Fel52} and in \cite{Wen59} for more general domains. This kind of diffusion is also considered in \cite[Chap. IV, Sect. 7]{IW89} on the set ${\mathbb{R}^d_{+}:=\{ x \in \mathbb{R}^d : x_d \geq 0 \}}$, $d \geq 2$, with Lipschitz continuous drifts. However, in \cite{IW89} the diffusion on the hyperplance $\{ x \in \mathbb{R}^d: x_d=0\}$ is independent of the process inside the interior of $\mathbb{R}_{+}^d$. In \cite{Car09} the author uses a Dirichlet form approach in order to construct diffusions in a similar setting to ours with the essential difference that the boundary behavior is not sticky and also a drift does not occur. More precisely, the considered approach corresponds to ordinary reflecting boundary conditions (with the $d$-dimensional Lebesgue measure as reference measure) instead of a sticky boundary behavior. However, it is possible to switch between the setting considered in \cite{Car09} and the present setting (for $\delta=1$) using random time changes. On the other hand, in \cite{VV03} diffusion operators on $\overline{\Omega}$ with sticky boundary behavior are considered, but without introducing a boundary operator on $\Gamma$. This is in accordance with our setting for $\delta=0$. Nevertheless, we also construct and analyze the underlying dynamics. Moreover, neither in \cite{Car09} nor in \cite{VV03} Feller properties of the associated semigroup are investigated. \\
In \cite{Gra88} a sticky diffusion is constructed by probabilistic methods. The constructed process coincides with our setting in special cases, but the considered domain is determined by the zero set of a $C^2(\mathbb{R}^d)$-function and the drift is assumed to be Lipschitz continuous and bounded which is quite restrictive. The author uses the constructed diffusion to model a system of particles interacting at the boundary. This interacting particle system in turn is used to study the behavior of molecules in a chromatography tube. This application as well as the application given in \cite{FGV14} and \cite{GV14} motivates our present considerations.\\
Relating to the construction of sticky reflection the authors in \cite{VV03} remark the following: "If one wants to describe particles which may
temporarily concentrate on the boundary, the reference measure should offer this
possibility - meaning that the boundary should not be a null set". Accordingly, we consider the reference measure $\varrho~(\lambda + \sigma)$ in order to assign mass to the boundary $\Gamma$, where $\lambda$ denotes the Lebesgue measure on $\overline{\Omega}$ and $\sigma$ the surface measure on $\Gamma$. Additionally, we point out the connection of this approach to random time changes.\\
In \cite{EP12} the authors analyze Brownian motion on $[0, \infty)$ which is sticky in $0$. They show that strong solutions do not exist and that the sticky Brownian motion is the limit of time scaled reflected Brownian motions. This suggests that a strong solution in our framework also does not exist and hence, the solutions we construct in this paper are optimal in this sense. \\

In Section 2, notations are explained and some basic facts about manifolds and Brownian motion on manifolds are stated. In Section 3, we construct a diffusion process via a Dirichlet form approach on sets $\overline{\Omega}$ with Lipschitz boundary and with very singular drift terms. For a $C^2$-boundary and under additional assumptions on the density $\varrho$, this process is identified as solution of an associated martingale problem und finally, as solution of the SDE (\ref{SDE}) for all starting points (except a set of capacity zero with respect to the underlying Dirichlet form) in Section 4. Moreover, we prove an ergodic theorem in order to verify that the boundary is indeed sticky. In Section 5, strong Feller properties of the underlying resolvent are established and used in order to strenghten the preceding results such that the constructed process solves (\ref{SDE}) for every starting point.\\

The main results are formulated in Theorem \ref{thmsolSDE}, Theorem \ref{thmergo} and Theorem \ref{main}.

\section{Preliminaries} \label{sectprel}

\subsection{General notation}

Throughout this paper, $\Omega \subset \mathbb{R}^d$, $d \geq 1$, denotes a nonempty bounded domain such that its boundary $\Gamma:=\partial \Omega$ is of Lebesgue measure zero. In the case $\delta=1$ we assume that $d \geq 2$. The standard scalar product in $\mathbb{R}^d$ is given by $(\cdot,\cdot)$ and norms in $\mathbb{R}^d$ by $|\cdot|$ (in particular, for the modulus in $\mathbb{R}$; eventually labeled by a lower index in order to distinguish norms). Similarly, $\Vert \cdot \Vert$ denotes norms in function spaces. The metric on $\mathbb{R}^d$ induced by the euclidean metric is denoted by $d_{\text{euc}}$. \\

For smooth functions, we denote by $\nabla$ the gradient as well as the Jacobian in the case of vector valued functions. Let $\nabla_i=\partial_i$, $i=1,\dots,d$, be the partial derivatives with respect to cartesian coordinates. If we take partial derivatives and want to point out the underlying coordinates, we write for example $\frac{\partial}{\partial x_i}$. Moreover, $\nabla^2$ denotes the Hessian for functions mapping from subsets of $\mathbb{R}^d$ to $\mathbb{R}$ and $\Delta= \text{Tr}( \nabla^2 )$ the Laplacian. In the case of Sobolev functions we use the same notations in the weak sense.

\subsection{Submanifolds in the euclidean space}

We recall some facts about hypersurfaces of $\mathbb{R}^d$ and Riemannian geometry:

\begin{definition} \label{defbdry}
Let $\Omega \subset \mathbb{R}^d$ be a bounded domain. The boundary $\Gamma$ of $\Omega$ is said to be \mbox{\textbf{Lipschitz continuous}} (respectively \textbf{$C^k$-smooth}) if it is locally the graph of a Lipschitz continuous (respectively $C^k$-) function, i.e., for every $x \in \Gamma$ exists a neighborhood $V$ of $x$ in $\mathbb{R}^d$ and new orthogonal coordinates $(y_1,\dots,y_d)$ (given by an orthogonal map $T$) such that
\begin{enumerate}
\item $V$ is a hypercube in the new coordinates:
\[ V=\{(y_1,\dots,y_d) : -a_i < y_i < a_i,~ 1 \leq i \leq d \}; \]
for some $a_i >0$, $1 \leq i \leq d$.
\item there exists a Lipschitz continuous (respectively $C^k$-) function $\varphi$, defined on 
\[ V^{\prime}=\{y^{\prime}=(y_1,\dots, y_{d-1}) : -a_i < y_i < a_i , ~ 1 \leq i \leq d-1 \} \]
and such that 
\begin{align*}
|\varphi(y^{\prime})| \leq \frac{a_d}{2} \text{ for every } y^{\prime} \in V^{\prime},\\
\Omega \cap V = \{ y=(y^{\prime},y_d) \in V : y_d < \phi(y^{\prime}) \},\\
\Gamma \cap V = \{ y=(y^{\prime},y_d) \in V : y_d = \phi(y^{\prime}) \}.
\end{align*}
\end{enumerate}
\end{definition}

So $\Gamma$ is Lipschitz continuous (respectively $C^k$-smooth) if $\Omega$ is locally below the graph of a Lipschitz continuous (respectively $C^k$-) function and the graph coincides with $\Gamma$. In this case, we also simply say that $\Gamma$ is Lipschitz (respectively $C^k$) or that $\Omega$ has Lipschitz boundary (respectively $C^k$-boundary).

\begin{remark} \label{remimpl}
Definition \ref{defbdry} makes $\Gamma$ a hypersurface of $\mathbb{R}^d$ and $\Gamma$ is in the coordinates $(y_1,\dots,y_d)$ locally  implicitly given by $F(y_1,\dots,y_d)=0$, where $F(y_1,\dots,y_d):= y_d - \varphi(y_1,\dots,y_{d-1})$ defined on $V$. Occasionally, we may refer to $y^{\prime}=(y_1,\dots,y_{d-1})$ as local coordinates of $\Gamma$. Note that in the case of a Lipschitz continuous boundary $\Gamma$, $\varphi$ is almost everywhere differentiable by Rademacher's theorem. Hence, $F$ is almost everywhere differentiable.
One can also consider $\overline{\Omega}$ as $d$-dimensional manifold with boundary.
\end{remark}


\begin{remark} \label{remriemann} 
The above definition makes $\Gamma$ a Riemannian manifold with induced Riemannian metric locally given by $G=(g_{ij})_{ij}$ where
\[ g_{ij}(y^{\prime})=\left\{\begin{array}{cl} \frac{\partial \varphi}{\partial y_i}(y^{\prime})~ \frac{\partial \varphi}{\partial y_j}(y^{\prime}) & \mbox{if } i \neq j\\
1+ \big(\frac{\partial \varphi}{\partial y_i}(y^{\prime})\big)^2 & \mbox{if } i=j \end{array}\right. \]
for $1 \leq i,j \leq d-1$. Denote by $g$ the determinant and by $(g^{ij})_{ij}$ the inverse $G^{-1}$ of $G$. Note that the surface measure $\sigma$ on $\Gamma$ is given by the $(d-1)$-dimensional Hausdorff measure which can locally be written as
\[ d\sigma= \sqrt{g} ~dy_1 \cdots dy_{d-1}= \sqrt{g} ~dy^{\prime}. \]
\end{remark}

\begin{definition} \label{defnormal}
Let $\Omega$ be open and bounded with Lipschitz continuous boundary $\Gamma$ and let $F$ be as in Remark \ref{remimpl}. Then we define for $y=(y_1,\dots,y_d)$ in $ V$
\[ \tilde{n}(y):= \frac{\nabla F(y) }{|\nabla F(y) |} = \frac{(- \nabla \varphi(y^{'}), 1)}{\sqrt{|\nabla \varphi(y^{'})|^2}+1} \]
supposed that $\varphi$ is differentiable at $(y_1,\dots,y_{d-1})$. Let $x \in \Gamma$ and $T \in \mathbb{R}^{d \times d}$ be the orthogonal coordinate transformation from Definition \ref{defbdry}. Then define the (outward) \textbf{normal vector} at $x$ by
\[ n(x):=  T^{-1}~ \tilde{n}(Tx). \]
\end{definition}

\begin{remark}
Note that the definition of $n$ also makes sense in a neighborhood of $x$ and $n$ is differentiable near $x$ if $\Gamma$ is $C^2$. 
\end{remark}

\begin{definition} \label{defproj}
Let $x \in \Gamma$ be such that $n(x)$ exists in the sense of Definition \ref{defnormal}. Define
\[ P(x):= E- n(x)n(x)^t \in \mathbb{R}^{d \times d}, \]
where $E$ is the $d \times d$ identity matrix. We call $P(x)$ the \textbf{orthogonal projection on the tangent space} at $x$. Note that $P(x)z=z- (n(x),z)~ n(x)$ for $z \in \mathbb{R}^d$.
\end{definition}

\begin{definition}
Let $f \in C^1(\overline{\Omega})$ and $x \in \Gamma$. Then we define (whenever $\Gamma$ is sufficiently smooth at $x$) the \textbf{gradient} of $f$ at $x$ along $\Gamma$ by 
\[ \nabla_{\Gamma} f (x):= P(x) \nabla f(x) \]
and if $f \in C^2(\overline{\Omega})$ the \textbf{Laplace-Beltrami} of $f$ at $x$ by
\[ \Delta_{\Gamma} f(x) := \text{Tr} ( \nabla_{\Gamma}^2 f(x))= \text{div}_{\Gamma} \nabla_{\Gamma} f(x)= \text{Tr}(P(x) \nabla (P(x) \nabla f(x))), \]
where $\text{div}_{\Gamma} \Phi := \text{Tr}(P \nabla \Phi)$ for $\Phi=(\Phi_1,\dots,\Phi_d) \in C^1(\overline{\Omega};\mathbb{R}^d)$ with $\nabla \Phi=J \Phi=(\nabla \Phi_1|\dots| \nabla \Phi_d)$.
Analogously, we define higher derivatives of order $k \in \mathbb{N}$. In this way, let $C^k(\Gamma_0)$ be the space of continuously differentiable functions on $\Gamma_0$ obtained by restriction of $C^k(\overline{\Omega})$-functions, where $\Gamma_0$ is an open subset of $\Gamma$ in the subspace topology. As usual, set $C^{\infty}(\Gamma_0):=\cap_{k \in \mathbb{N}}~ C^k(\Gamma_0)$. Moreover, in the case that $n$ is differentiable at $x$ we define the \textbf{mean curvature} of $\Gamma$ at $x$ by 
\[ \kappa(x):= \text{div}_{\Gamma}~ n (x). \]
\end{definition}

\begin{remark}
\begin{enumerate}
\item For $f \in C^k(\overline{\Omega})$, $k \in \mathbb{N}$, the above definitions are in accordance with the ordinary definition on Riemannian manifolds for $f|_{\Gamma}$ using the inclusion $T_x \Gamma \hookrightarrow \mathbb{R}^d$, where $T_x \Gamma$ denotes the tangent space at $x \in \Gamma$. Conversely, if a function $f$, defined only on the Riemannian manifold $\Gamma$, is in $C^k(\Gamma)$, $k \in \mathbb{N}$, in the sense of manifolds, it is possible to extend the definition of $f$ to a $C^k$-function on an open set in $\mathbb{R}^d$ which contains $\Gamma$ and then it is feasible to use the definitions given above. Thus, $C^k(\Gamma)$, $k \in \mathbb{N}$, contains exactly the functions on $\Gamma$ obtained by restricting functions from $C^k(\overline{\Omega})$ to $\Gamma$.
\item By the Riemannian metric $G$ mentioned in Remark \ref{remriemann}, we have in local coordinates (using the Einstein summation convention) for the Riemannian gradient the representation
$ g^{ij} \frac{\partial f}{\partial y_i} \frac{\partial}{\partial y_j} $
and for the Laplace-Beltrami operator
$ \frac{1}{\sqrt{g}} \frac{\partial}{\partial y_i} \big( \sqrt{g} g^{ij} \frac{\partial f}{\partial y_j} \big). $
\item For smooth functions, we have the divergence theorem
\begin{align} \label{divergence} \int_{\Gamma} (\Phi , \nabla_{\Gamma} g)~ d\sigma = - \int_{\Gamma} \text{div}_{\Gamma} \Phi~ g~d\sigma,
\end{align}
where $\Phi$ is $\mathbb{R}^d$-valued (see e.g. \cite[Chap. 2, Proposition 2.2]{Tay11}).
\end{enumerate}
\end{remark}

\begin{lemma} \label{lemcurv}
Assume that $\Gamma$ is $C^2$-smooth. Then
\[ ( P \nabla )^t P = - \kappa n. \] 
\end{lemma}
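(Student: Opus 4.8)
The plan is to reduce the identity to a short computation in Cartesian components, exploiting the explicit form $P = E - n\,n^t$ from Definition \ref{defproj} together with the normalisation $|n| \equiv 1$. As a preliminary remark I would record that, since $\Gamma$ is $C^2$, the remark after Definition \ref{defnormal} guarantees that $n$ extends to a $C^1$-map on a neighbourhood of each boundary point (with $|n| = 1$ there, since $\tilde n$ takes values in the unit sphere and the transformation $T$ is orthogonal); hence all partial derivatives appearing below are well defined, and the asserted equality in fact makes sense and holds on that neighbourhood, not only pointwise on $\Gamma$.

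Next I would unwind the notation: viewing $P\nabla$ as the column vector of first-order operators whose $i$-th entry is $\sum_j P_{ij}\,\partial_j$, the symbol $(P\nabla)^t P$ denotes the vector whose $k$-th component is $\sum_{i,j} P_{ij}\,\partial_j P_{ik}$ — that is, $\text{div}_{\Gamma}$ applied column by column to the matrix $P$. Inserting $P_{ik} = \delta_{ik} - n_i n_k$ and differentiating, $\partial_j P_{ik} = -(\partial_j n_i)\,n_k - n_i\,(\partial_j n_k)$, so that
\[
\bigl((P\nabla)^t P\bigr)_k \;=\; \sum_{i,j} P_{ij}\,\partial_j P_{ik} \;=\; -\,n_k \sum_{i,j} P_{ij}\,\partial_j n_i \;-\; \sum_{j}\Bigl(\sum_{i} P_{ij}\,n_i\Bigr)\,\partial_j n_k .
\]
The first sum is $\text{Tr}(P\nabla n) = \text{div}_{\Gamma} n = \kappa$ by the definition of the mean curvature, contributing exactly $-\kappa\,n_k$. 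For the second sum, the inner contraction is the $j$-th entry of $P^t n = P n = n - n\,|n|^2 = 0$, using the symmetry of $P$ and $|n| = 1$; hence the second sum vanishes entirely, and $(P\nabla)^t P = -\kappa\,n$ follows.

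I do not expect any genuine obstacle here: the only real inputs are the explicit formula $P = E - n\,n^t$, the symmetry of $P$, the consequence $Pn = 0$ of $|n|=1$, and the definitions of $\text{div}_{\Gamma}$ and $\kappa$. The one point that deserves a little care is the meaning of the operator transpose $(P\nabla)^t$ and, correspondingly, the convention that in $(P\nabla)^t P$ the differential operators act on the entries of $P$ (and are not themselves differentiated further to the right); once this is pinned down, the three displayed lines above are the whole argument.
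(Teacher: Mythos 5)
Your proposal is correct and follows essentially the same route as the paper's proof: expand $P=E-nn^t$, compute the components of $(P\nabla)^tP$ directly, identify $\sum_{i,j}P_{ij}\partial_j n_i=\operatorname{Tr}(P\nabla n)=\kappa$, and kill the remaining term via $Pn=0$ (the paper phrases this last step as $(n,P\nabla n_i)=0$ because $P$ projects onto $\operatorname{span}(n)^{\perp}$, which is the same fact). Your preliminary remarks on the $C^1$-extension of $n$ with $|n|=1$ and on the operator convention in $(P\nabla)^tP$ are accurate and consistent with the paper's usage.
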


\begin{proof}
Fix $i \in \{1, \dots d \}$. It holds
\begin{align*}
 \big( (P \nabla)^t P \big)_i &=  \sum_{k,j} P_{jk} \nabla_j P_{ik} \\
&= - \sum_{k,j} P_{jk} \nabla_j (n_i n_k) \\
&= - \sum_{k,j} (1-n_j n_k) (\nabla_j n_i n_k + n_i \nabla_j n_k)\\
&= - \big( \sum_{k,j} (1-n_j n_k) \nabla_j n_k \big) n_i -  \sum_{k,j} (1-n_j n_k) \nabla_j n_i n_k \\
&= - \text{Tr} ( P \nabla n ) n_i - (n, P \nabla n_i) = - \kappa n_i - (n, P\nabla n_i).
\end{align*}
Using that $P$ is the orthogonal projection on $\big(\text{span}(n) \big)^{\perp}$, we get that $(n, P\nabla n_i)=0$ and therfore, the assertion holds true.
\end{proof}

\begin{definition}
Let $\Gamma_0$ be an open subset of $\Gamma$ in the subspace topology. The \textbf{Sobolev space} $H^{1,k}(\Gamma_0)$, $k \geq 1$, is defined by $\overline{C^1(\Gamma_0)}^{\Vert \cdot \Vert_{H^{1,k}(\Gamma_0)}} \subset L^k(\Gamma_0;\sigma)$, i.e., the closure $C^1(\Gamma_0)$ with respect to the norm
\[ \Vert \cdot \Vert_{H^{1,k}(\Gamma_0)} := \big( \Vert \cdot \Vert_{L^k(\Gamma_0;\sigma)}^k + \Vert \nabla_{\Gamma} \cdot \Vert_{L^k(\Gamma_0;\sigma)}^k \big)^{\frac{1}{k}}. \]
\end{definition}

\begin{remark}
$H^{1,k}(\Gamma_0)$ can also be charaterized as the space of functions which are in local coordinates in the corresponding Sobolev space.\\
 If $f \in H^{1,k}(\Gamma_0)$ and $(f_n)_{n \in \mathbb{N}}$ is an approximating sequence of smooth functions, Cauchy in $H^{1,k}(\Gamma_0)$, we call the $L^k(\Gamma_0;\sigma)$-limit of $(\nabla_{\Gamma} f_n)_{n \in \mathbb{N}}$ the weak gradient of $f$ and denote it by $\nabla_{\Gamma} f$. In the case $\Gamma_0=\Gamma$, (\ref{divergence}) transfers from $f_n$ to $f$ using a continuity argument provided that $\Phi \in L^{k^{\prime}}(\Gamma;\sigma)$ for $\frac{1}{k}+\frac{1}{k^{\prime}}=1$.
\end{remark}

\subsection{Brownian motion on manifolds}

We shortly recall some facts about Brownian motion on $\Gamma$. For details about stochastic analysis on manifolds, we refer to \cite{HT94}, \cite{Hsu02} and \cite{IW89}.\\

By definition, Brownian motion $(B_t^{\Gamma})_{t \geq 0}$ on $\Gamma$ is a $\Gamma$-valued stochastic process that is generated by $\frac{1}{2} \Delta_{\Gamma}$, in analogy to Brownian motion on $\mathbb{R}^d$, in the sense that $(B_t^{\Gamma})_{t \geq 0}$ solves the martingale problem for $(\frac{1}{2} \Delta_{\Gamma},C^{\infty}(\Gamma))$. We recall the following:

\begin{lemma} 
Let $\Gamma$ be a submanifold of $\mathbb{R}^d$ as in Definition \ref{defbdry}. Then a solution of the Stratonovich SDE
\[ d\mathbf{X}_t= P(\mathbf{X}_t) \circ dB_t, \ \ \mathbf{X}_0 \in \Gamma, \]
is a Brownian motion on $\Gamma$, where $(B_t)_{t \geq 0}$ is a Brownian motion in $\mathbb{R}^d$.
\end{lemma}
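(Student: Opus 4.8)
The plan is to verify directly that any solution $(\mathbf{X}_t)_{t\ge0}$ of the Stratonovich equation with $\mathbf{X}_0\in\Gamma$ satisfies the defining property of Brownian motion on $\Gamma$, i.e.\ that it solves the martingale problem for $(\tfrac12\Delta_\Gamma,C^\infty(\Gamma))$. Throughout I would (tacitly) assume $\Gamma$ to be $C^2$-smooth, so that $\Delta_\Gamma$ and Lemma \ref{lemcurv} are at our disposal. As preparation I would extend $n$ to a $C^1$ map on a neighbourhood of $\Gamma$ in $\mathbb{R}^d$ (possible since $\Gamma$ is $C^2$, cf.\ the remark after Definition \ref{defnormal}), put $P:=E-nn^t$ there, and extend a given test function $f\in C^\infty(\Gamma)$ to a $C^\infty$ function on a neighbourhood of $\Gamma$; here one uses that $\nabla_\Gamma f$ and $\Delta_\Gamma f$ coincide with the intrinsic Riemannian gradient and Laplace--Beltrami operator and are, in particular, independent of the chosen extension.

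First I would check that the solution stays on $\Gamma$. Applying the Stratonovich chain rule, which obeys the ordinary rules of calculus, to a local defining function $F$ of $\Gamma$ as in Remark \ref{remimpl} (or, globally, to the signed distance to $\Gamma$) gives
\[
 dF(\mathbf{X}_t)=\big(\nabla F(\mathbf{X}_t),\circ\, d\mathbf{X}_t\big)=\big(P(\mathbf{X}_t)\nabla F(\mathbf{X}_t),\circ\, dB_t\big)=0,
\]
since $\nabla F$ is normal to $\Gamma$ and $Pn=0$; hence $F(\mathbf{X}_t)\equiv F(\mathbf{X}_0)=0$, so $\mathbf{X}_t\in\Gamma$ for all $t$.

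Next I would pass to the Itô form of the equation. The Stratonovich correction term is $\tfrac12\sum_{j,k}P_{jk}\,\partial_j P_{ik}\,dt=\tfrac12\big((P\nabla)^tP\big)_i\,dt$, which by Lemma \ref{lemcurv} equals $-\tfrac12\kappa\,n_i\,dt$, so that $d\mathbf{X}_t=P(\mathbf{X}_t)\,dB_t-\tfrac12\kappa(\mathbf{X}_t)\,n(\mathbf{X}_t)\,dt$. Applying Itô's formula to $f(\mathbf{X}_t)$, the martingale part is $(\nabla f(\mathbf{X}_t))^tP(\mathbf{X}_t)\,dB_t=(\nabla_\Gamma f(\mathbf{X}_t))^t\,dB_t$ (using $P^t=P$), while, since the quadratic covariation matrix of $\mathbf{X}$ equals $PP^t=P$ because $P$ is a symmetric idempotent, the drift part is $\big(\tfrac12\,\text{Tr}(P\nabla^2 f)-\tfrac12\kappa\,(n,\nabla f)\big)(\mathbf{X}_t)\,dt$. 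It then remains to recognise this drift as $\tfrac12\Delta_\Gamma f$: expanding $\Delta_\Gamma f=\text{Tr}(P\nabla(P\nabla f))$ by the product rule produces $\text{Tr}(P\nabla^2 f)+\big((P\nabla)^tP,\nabla f\big)=\text{Tr}(P\nabla^2 f)-\kappa\,(n,\nabla f)$, again by Lemma \ref{lemcurv}. Since $\Gamma$ is compact and $f,\Delta_\Gamma f$ are continuous, the local martingale $f(\mathbf{X}_t)-f(\mathbf{X}_0)-\int_0^t\tfrac12\Delta_\Gamma f(\mathbf{X}_s)\,ds$ is in fact a true martingale, and as $f\in C^\infty(\Gamma)$ was arbitrary this is precisely the martingale problem for $(\tfrac12\Delta_\Gamma,C^\infty(\Gamma))$.

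The heart of the proof is the algebraic identification in the last step: the Itô drift, assembled from the Stratonovich correction and the second-order term of Itô's formula, must coincide with $\tfrac12\Delta_\Gamma f$, and this is exactly where Lemma \ref{lemcurv} (the relation $(P\nabla)^tP=-\kappa n$) enters twice and the index bookkeeping has to be carried out with care. A secondary point worth flagging is that the single terms $\text{Tr}(P\nabla^2 f)$ and $\kappa\,(n,\nabla f)$ do depend on the ambient extensions of $f$, $P$ and $n$, whereas only their combination $\Delta_\Gamma f$ is intrinsic to $\Gamma$; both the Stratonovich correction and the Itô second-order term conspire to deliver exactly this combination, so the conclusion is independent of all extension choices.
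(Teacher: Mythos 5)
Your argument is correct, but it is a genuinely different route from the paper, which offers no computation at all for this lemma and simply refers to \cite[Chap.~3, Sect.~2]{Hsu02}, where Brownian motion on an embedded submanifold is treated. What you do instead is a self-contained verification inside the paper's own framework: you check $\Gamma$-invariance via the Stratonovich chain rule applied to a defining function (with $n$, hence $P$, extended to a tubular neighborhood so that $P\nabla F=0$ holds off $\Gamma$ as well), convert to It{\^o} form using the paper's It{\^o}--Stratonovich transformation rule, and then identify the drift with $\tfrac12\Delta_{\Gamma}f$ by invoking Lemma \ref{lemcurv} twice — once for the Stratonovich correction $\tfrac12\big((P\nabla)^tP\big)=-\tfrac12\kappa n$ and once for the extrinsic identity $\Delta_{\Gamma}f=\mathrm{Tr}(P\nabla^2 f)-\kappa\,(n,\nabla f)$, which is exactly the identity the paper itself uses later in Section 4 when computing the generator. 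The index bookkeeping, the use of $PP^t=P$, and the passage from local to true martingale via compactness of $\Gamma$ are all sound, and your observation that only the combined drift is extension-independent is a worthwhile point. The one caveat, which you flag honestly, is the tacit $C^2$-assumption on $\Gamma$: the lemma as stated only requires Definition \ref{defbdry}, but $\kappa$, Lemma \ref{lemcurv} and the $C^1$-regularity of $P$ needed for the transformation rule all require $C^2$, so your proof covers precisely the regularity regime in which the paper actually applies the lemma (Condition \ref{condcont}). What your route buys is self-containedness and transparency of where Lemma \ref{lemcurv} enters; what the paper's citation buys is brevity and coverage by the general theory in \cite{Hsu02} without re-deriving the standard computation.
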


\begin{proof}
See \cite[Chap. 3, Sect. 2]{Hsu02}.
\end{proof}

\begin{remark}
Note that the dimension of the driving Brownian motion $(B_t)_{t \geq 0}$ is strictly larger than the dimension of the submanifold $\Gamma$ and hence, according to \cite{Hsu02} the driving Brownian motion contains some extra information beyond what is usually provided by a Brownian motion on $\Gamma$. Furthermore, a solution of the above SDE is naturally $\Gamma$-valued, since $P(x)z$ is tangential to $\Gamma$ at $x$ for every $x \in \Gamma$ and $z \in \mathbb{R}^d$. In our application, it is natural to construct a Brownian motion on $\Gamma$ by means of a $d$-dimensional Brownian motion, since a Brownian motion on $\mathbb{R}^d$ is involved anyway.
\end{remark}

We also need the following result:

\begin{theorem}[It{\^o}-Stratonovich transformation rule]
Consider a diffusion in $\mathbb{R}^d$ driven by a $d$-dimensional Brownian motion $B=(B_t)_{t \geq 0}$ via the Stratonovich SDE
\[ d\mathbf{X}_t = S(\mathbf{X}_t) \circ dB_t, \]
where $S:\mathbb{R}^d \mapsto \mathbb{R}^{d \times d}$ is $C^1$-smooth and symmetric. Then the It{\^o} form reads
\[ d\mathbf{X}_t = S(\mathbf{X}_t) dB_t + \frac{1}{2} \big((S \nabla)^t S \big)(\mathbf{X}_t) dt. \]
\end{theorem}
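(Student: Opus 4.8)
The plan is to convert each Stratonovich differential into its It\^o counterpart component by component, using the defining relation between the Stratonovich and It\^o integrals, namely that for continuous semimartingales $Y$ and $Z$ one has $\int_0^t Y_s \circ dZ_s = \int_0^t Y_s\, dZ_s + \tfrac12 \langle Y, Z\rangle_t$. Writing the SDE coordinatewise as $d\mathbf{X}^i_t = \sum_{k} S_{ik}(\mathbf{X}_t) \circ dB^k_t$, the integrand of the $k$-th term is $Y^{i,k}_t := S_{ik}(\mathbf{X}_t)$, so the correction term is $\tfrac12 \sum_k \langle S_{ik}(\mathbf{X}), B^k\rangle_t$. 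First I would compute this quadratic covariation: by It\^o's formula applied to the $C^1$ (in fact we only need enough smoothness that the chain rule for the finite-variation computation of brackets applies; $C^1$ suffices for the bracket since only first derivatives enter) function $S_{ik}$, the martingale part of $S_{ik}(\mathbf{X}_t)$ is $\sum_{j,\ell} \partial_j S_{ik}(\mathbf{X}_t)\, S_{j\ell}(\mathbf{X}_t)\, dB^\ell_t$, and only this part contributes to the bracket with $B^k$.

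Next I would evaluate $d\langle S_{ik}(\mathbf{X}), B^k\rangle_t = \sum_{j,\ell} \partial_j S_{ik}(\mathbf{X}_t) S_{j\ell}(\mathbf{X}_t)\, d\langle B^\ell, B^k\rangle_t = \sum_{j} \partial_j S_{ik}(\mathbf{X}_t) S_{jk}(\mathbf{X}_t)\, dt$, using $d\langle B^\ell, B^k\rangle_t = \delta_{\ell k}\, dt$. Summing over $k$ gives the $i$-th component of the drift correction as $\tfrac12 \sum_{j,k} S_{jk}(\mathbf{X}_t)\, \partial_j S_{ik}(\mathbf{X}_t)\, dt$. It remains to recognize this expression as the $i$-th component of $(S\nabla)^t S$. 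By the conventions in the paper, $(\nabla \Phi)_{jk} = (J\Phi)_{jk} = \partial_j \Phi_k$, so for $\Phi = S_{\cdot k}$ (the $k$-th column of $S$) we get $(S\nabla)^t$ acting appropriately; more directly, $\big((S\nabla)^t S\big)_i = \sum_{j,k} (S\nabla)_{?}$ — one checks by unwinding the $(P\nabla)^t P$ notation used in Lemma \ref{lemcurv}, where $\big((P\nabla)^t P\big)_i = \sum_{k,j} P_{jk} \nabla_j P_{ik}$, that with $S$ in place of $P$ the analogous identity reads $\big((S\nabla)^t S\big)_i = \sum_{k,j} S_{jk}\, \nabla_j S_{ik} = \sum_{j,k} S_{jk}(\mathbf{X}_t)\, \partial_j S_{ik}(\mathbf{X}_t)$, which matches exactly (the symmetry of $S$ is used to align indices if one prefers the transpose written on the other factor).

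The main obstacle is purely bookkeeping: making sure the index conventions for $\nabla$-as-Jacobian, for the transpose, and for the matrix product $(S\nabla)^t S$ all line up with the paper's notation (the same notation as in Lemma \ref{lemcurv}), and verifying that no extra cross terms survive in the bracket computation — in particular that the finite-variation part of $\mathbf{X}$ contributes nothing to $\langle S_{ik}(\mathbf{X}), B^k\rangle$ and that the hypothesis $S \in C^1$ is exactly what is needed to run the chain rule at the level of brackets. Once the indices are matched, assembling the coordinatewise identities back into the vector form $d\mathbf{X}_t = S(\mathbf{X}_t)\,dB_t + \tfrac12\big((S\nabla)^t S\big)(\mathbf{X}_t)\,dt$ completes the proof.
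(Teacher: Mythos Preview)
The paper does not provide a proof of this theorem; it is stated without proof as a standard result from stochastic analysis (in the same spirit as the preceding lemma, which is simply referenced to \cite{Hsu02}). Your proposal supplies exactly the standard argument: convert each Stratonovich integral to It\^o form using $Y\circ dZ = Y\,dZ + \tfrac12\,d\langle Y,Z\rangle$, compute the covariation via the martingale part of $S_{ik}(\mathbf{X})$, and then match the resulting index expression $\sum_{j,k} S_{jk}\,\partial_j S_{ik}$ with the paper's notation $\big((S\nabla)^t S\big)_i$ by comparison with the computation in Lemma~\ref{lemcurv}. This is correct and is the expected route; there is nothing to compare against in the paper itself.

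One small caveat worth tightening: invoking ``It\^o's formula'' for the $C^1$ function $S_{ik}$ is slightly informal, since the classical It\^o formula asks for $C^2$. What you actually need---and what you correctly isolate---is only the bracket $\langle S_{ik}(\mathbf{X}),B^k\rangle$, and for that it suffices that $S_{ik}(\mathbf{X})$ be a semimartingale whose martingale part has the claimed integrand. This follows either by a $C^2$-approximation of $S$ (the brackets pass to the limit) or directly from the definition of the Stratonovich integral, which already presupposes that $S(\mathbf{X})$ is a semimartingale. Making this step explicit would remove any appearance of circularity.
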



\section{The Dirichlet form and the associated Markov process}\label{sectprocess}

\begin{condition} \label{conddensity}
$\Gamma$ is Lipschitz continuous. Moreover, $\alpha \in L^1(\Omega; \lambda)$, $\alpha >0$ $\lambda$-a.e., and $\beta \in L^1(\Gamma; \sigma)$, $\beta >0$ $\sigma$-a.e..
\end{condition}

Define
\[ \varrho:= \mathbbm{1}_{\Omega} ~ \alpha + \mathbbm{1}_{\Gamma} ~ \beta \]
as well as
\[ \mu:= \varrho ~ (\lambda + \sigma) = \alpha \lambda + \beta  \sigma. \]
Note that the condition $\alpha \in L^1(\Omega; \lambda)$, $\alpha >0$ $\lambda$-a.e., and $\beta \in L^1(\Gamma; \sigma)$, $\beta >0$ $\sigma$-a.e. is equivalent to $\varrho \in L^1(\overline{\Omega};\lambda + \sigma)$, $\varrho >0$ $(\lambda + \sigma)$-a.e..

\begin{proposition}
Under Condition \ref{conddensity} we have that $C^{\infty}(\overline{\Omega})$ is dense in $L^2(\overline{\Omega};\mu)$.
\end{proposition}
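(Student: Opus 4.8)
The plan is to use the orthogonal decomposition
\[
L^2(\overline{\Omega};\mu)\;\cong\;L^2(\Omega;\alpha\lambda)\oplus L^2(\Gamma;\beta\sigma),
\]
which holds isometrically via $f\mapsto(f\,\mathbbm{1}_\Omega,\,f\,\mathbbm{1}_\Gamma)$ because $\lambda(\Gamma)=0$ and $\sigma$ is carried by $\Gamma$; under this identification $C^\infty(\overline{\Omega})$ maps to the set of pairs $(f|_\Omega,f|_\Gamma)$ with $f\in C^\infty(\overline{\Omega})$. I would then approximate the interior component and the boundary component essentially independently.

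For the boundary part: since $\Omega$ is bounded, $\overline{\Omega}$ is compact, hence $\Gamma$ is a compact metric space, and $\beta\sigma$ is a finite Borel (thus Radon) measure on $\Gamma$ because $\sigma(\Gamma)<\infty$ for a Lipschitz hypersurface and $\beta\in L^1(\Gamma;\sigma)$. Therefore $C(\Gamma)$ is dense in $L^2(\Gamma;\beta\sigma)$. By the Stone–Weierstrass theorem the restrictions to $\Gamma$ of polynomials are uniformly dense in $C(\Gamma)$, and such polynomials lie in $C^\infty(\overline{\Omega})$; since $\|g\|_{L^2(\Gamma;\beta\sigma)}^2\le\|g\|_\infty^2\,\|\beta\|_{L^1(\Gamma;\sigma)}$, uniform approximation upgrades to $L^2(\Gamma;\beta\sigma)$-approximation. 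Hence $\{f|_\Gamma:f\in C^\infty(\overline{\Omega})\}$ is dense in $L^2(\Gamma;\beta\sigma)$.

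For the interior part: $\alpha\lambda$ is a finite Radon measure on the locally compact space $\Omega$ (finite because $\alpha\in L^1(\Omega;\lambda)$), so $C_c(\Omega)$ is dense in $L^2(\Omega;\alpha\lambda)$. Given $\phi\in C_c(\Omega)$ with support $K\subset\Omega$, mollification $\phi_\delta=\phi\ast\eta_\delta$ gives, for $\delta<\operatorname{dist}(K,\Gamma)$, a function in $C_c^\infty(\Omega)$ converging to $\phi$ uniformly, with all supports inside a fixed compact $K'\subset\Omega$; since $\int_{K'}\alpha\,d\lambda\le\|\alpha\|_{L^1(\Omega;\lambda)}<\infty$, we obtain $\phi_\delta\to\phi$ in $L^2(\Omega;\alpha\lambda)$. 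Thus $C_c^\infty(\Omega)$, regarded as a subspace of $C^\infty(\overline{\Omega})$ by extension by zero, is dense in $L^2(\Omega;\alpha\lambda)$.

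Finally I would glue the two: given $(u,v)\in L^2(\Omega;\alpha\lambda)\oplus L^2(\Gamma;\beta\sigma)$ and $\varepsilon>0$, pick $g\in C^\infty(\overline{\Omega})$ with $\|g|_\Gamma-v\|_{L^2(\Gamma;\beta\sigma)}<\varepsilon$, then pick $h\in C_c^\infty(\Omega)$ with $\|h-(u-g|_\Omega)\|_{L^2(\Omega;\alpha\lambda)}<\varepsilon$; the function $f:=g+h\in C^\infty(\overline{\Omega})$ satisfies $f|_\Gamma=g|_\Gamma$ (as $h$ vanishes near $\Gamma$) and $f|_\Omega=g|_\Omega+h$, so $\|f-(u+v)\|_{L^2(\overline{\Omega};\mu)}<2\varepsilon$. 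The only step that requires genuine care is this decoupling: one must be sure that correcting the interior component does not perturb the already-fixed boundary component, which is precisely why one uses mollifications with compact support in $\Omega$; everything else is a routine reduction of weighted-$L^2$ density to density of continuous functions followed by smoothing.
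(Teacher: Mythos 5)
Your argument is correct, but it is organized quite differently from the paper's proof. The paper works with $\mu$ as a single measure on the compact space $\overline{\Omega}$: it first quotes a general result (Bauer) giving density of $C(\overline{\Omega})$ in $L^2(\overline{\Omega};\mu)$ for a Baire measure on a compact space, then uses Stone--Weierstrass to approximate continuous functions uniformly by $C^{\infty}(\overline{\Omega})$-functions, and finally uses $\mu(\overline{\Omega})<\infty$ to upgrade uniform to $L^2(\mu)$-convergence; no splitting of $\mu$ is needed. You instead exploit the mutual singularity of $\alpha\lambda$ and $\beta\sigma$ to decompose $L^2(\overline{\Omega};\mu)$ into an interior and a boundary component, approximate the boundary component by restrictions of polynomials (Stone--Weierstrass on the compact set $\Gamma$, plus finiteness of $\beta\sigma$) and the interior component by mollified $C_c(\Omega)$-functions, and then glue, using that the compactly supported correction vanishes near $\Gamma$ so it does not disturb the already-fixed boundary approximation. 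Both proofs rest on the same underlying ingredient (density of continuous, respectively compactly supported continuous, functions in $L^2$ of a finite Borel/Radon measure, then smoothing and finiteness of the measure), so yours is not more general, but it is more explicitly constructive and shows the slightly stronger fact that the interior and boundary traces can be prescribed and approximated independently; the paper's route is shorter because treating $\mu$ globally makes the gluing step, which is the only delicate point in your argument and which you handle correctly, unnecessary.
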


\begin{proof}
Let $f \in L^2(\overline{\Omega};\mu)$. By \cite[Corollary 7.5.5]{Bau81} we get that $C(\overline{\Omega})$ is dense in $L^2(\overline{\Omega};\mu)$, since $\mu$ is a Baire measure and $\overline{\Omega}$ is compact. Hence there exists a sequence $(g_i)_{i \in \mathbb{N}}$ in $C(\overline{\Omega})$ converging to $f$ with respect to $\Vert \cdot \Vert_{L^2(\overline{\Omega};\mu)}$. Due to the Stone-Weierstra{\ss} theorem  $C^{\infty}(\overline{\Omega})$ is dense in $C(\overline{\Omega})$ with respect to $\Vert \cdot \Vert_{\sup}$, where $\Vert h \Vert_{\sup} := \sup_{x \in \overline{\Omega}} |h(x)|$ for $h \in C(\overline{\Omega})$. Thus, for each $i \in \mathbb{N}$ exists a sequence $(f_j^i)_{j \in \mathbb{N}}$ in $C^{\infty}(\overline{\Omega})$ converging to $g_i$ with respect to $\Vert \cdot \Vert_{\sup}$. \\
Let $\varepsilon >0$. Then, by the previous considerations, there exists some $k \in \mathbb{N}$ such that \linebreak $\Vert f - g_k \Vert_{L^2(\overline{\Omega};\mu)} < \frac{\varepsilon}{2}$. Accordingly, there exists some $l \in \mathbb{N}$ such that $\Vert g_k - f_l^k \Vert_{\sup} < \frac{\varepsilon}{2 \sqrt{\mu (\overline{\Omega})}}$. Hence,
\begin{align*}
\Vert f - f_l^k \Vert^2_{L^2(\overline{\Omega};\mu)} &\leq 2 \Vert f-g_k \Vert^2_{L^2(\overline{\Omega};\mu)} + 2 \Vert g_k - f_l^k \Vert^2_{L^2(\overline{\Omega};\mu)} \\
&\leq \frac{\varepsilon^2}{2} + 2 \Vert g_k - f_l^k \Vert_{\sup}^2~ \mu(\overline{\Omega}) \\
&< \frac{\varepsilon^2}{2} + \frac{\varepsilon^2}{2 \mu(\overline{\Omega})}  \mu(\overline{\Omega}) < \varepsilon^2.
\end{align*}
Therefore, $C^{\infty}(\overline{\Omega})$ is dense in $L^2(\overline{\Omega};\mu)$.
\end{proof}


Let the symmetric and positive definite bilinear form $(\mathcal{E},\mathcal{D})$ be given by
\begin{align} \label{defform} \mathcal{E}(f,g):= \frac{1}{2} \int_{\Omega} (\nabla f, \nabla g)~ \alpha d\lambda + \frac{\delta}{2} \int_{\Gamma} (\nabla_{\Gamma} f,\nabla_{\Gamma} g) ~\beta d\sigma \ \text{ for } f,g \in \mathcal{D}:=C^1(\overline{\Omega}), \end{align}
where $(\cdot,\cdot)$ denotes the euclidean scalar product in $\mathbb{R}^d$ and $\delta \in \{0,1\}$. In addition, let 
\[ \mathcal{E}_{\Omega}(f,g):=  \frac{1}{2} \int_{\Omega} (\nabla f, \nabla g)~ \alpha d\lambda \ \text{ for } f,g \in \mathcal{D}_{\Omega}:=C^1(\overline{\Omega}) \]
as well as
\[ \mathcal{E}_{\Gamma}(f,g):= \frac{1}{2} \int_{\Gamma} (\nabla_{\Gamma} f,\nabla_{\Gamma} g) ~\beta d\sigma \ \text{ for } f,g \in \mathcal{D}_{\Gamma}:=C^1(\Gamma). \]
Note that $e(\mathcal{D})=e(\mathcal{D}_{\Omega})= \mathcal{D}_{\Gamma}$, where $e: C^1(\overline{\Omega}) \rightarrow C^1(\Gamma)$ is defined by the restriction of functions to $\Gamma$. In this terms, for $f,g \in \mathcal{D}$ we get 
\[ \mathcal{E}(f,g)= \mathcal{E}_{\Omega}(f,g) + \delta ~\mathcal{E}_{\Gamma}(f,g).\]

In order to prove closability of $(\mathcal{E},\mathcal{D})$, we need an additional assumption on the density $\varrho$. Define
\[ R_{\alpha}(\Omega) :=\{ x \in \Omega : \int_{\{ y \in \Omega : |x-y| < \epsilon \}} \alpha^{-1} d\lambda < \infty \ \text{ for some } \epsilon >0 \} \]
and analogously $R_{\beta}(\Gamma)$ with $\Omega$ replaced by $\Gamma$ and $\lambda$ replaced by $\sigma$. 

\begin{condition}[Hamza condition] \label{condhamza}
$\alpha =0$ $\lambda$-a.e. on $\Omega \backslash R_{\alpha}(\Omega)$ and additionally $\beta=0$ $\sigma$-a.e. on $\Gamma \backslash R_{\beta}(\Gamma)$ if $\delta=1$.
\end{condition}

\begin{lemma} \label{lemclosable}
Assume that Condition \ref{conddensity} and Condition \ref{condhamza} are fulfilled. Then the densely defined, symmetric bilinear forms $(\mathcal{E}_{\Omega},\mathcal{D}_{\Omega})$ and $(\mathcal{E}_{\Gamma},\mathcal{D}_{\Gamma})$ (if $\delta=1$) are closable on $L^2(\overline{\Omega};\alpha \lambda)$ and on $L^2(\Gamma; \beta \sigma)$ respectively. Moreover, the closures $(\mathcal{E}_{\Omega},D(\mathcal{E}_{\Omega}))$ and $(\mathcal{E}_{\Gamma},D(\mathcal{E}_{\Gamma}))$ are conservative, strongly local, regular, symmetric Dirichlet forms. 
\end{lemma}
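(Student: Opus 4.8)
The plan is to treat the two forms in parallel, since both are of the same type: a Dirichlet energy with respect to a weight ($\alpha$ on $\Omega$, $\beta$ on $\Gamma$) satisfying a Hamza-type condition. I will spell out the argument for $(\mathcal{E}_\Omega,\mathcal{D}_\Omega)$ and indicate the (cosmetic) modifications for $(\mathcal{E}_\Gamma,\mathcal{D}_\Gamma)$ needed when $\delta=1$.

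First, closability. The classical Hamza criterion (see e.g. Fukushima–Oshima–Takeda, or Ma–Röckner) states that for a weight $\alpha\ge 0$ on an open set $\Omega\subset\mathbb R^d$, the form $\tfrac12\int_\Omega(\nabla f,\nabla g)\,\alpha\,d\lambda$ with domain $C_0^\infty(\Omega)$ (or $C^1$) is closable on $L^2(\Omega;\alpha\lambda)$ precisely when $\alpha=0$ $\lambda$-a.e.\ on $\Omega\setminus R_\alpha(\Omega)$, which is exactly Condition~\ref{condhamza}. So the first step is just to invoke this criterion. One small point to check is that we are using $\mathcal{D}_\Omega=C^1(\overline\Omega)$ rather than $C_0^\infty(\Omega)$, and the reference measure is $\alpha\lambda$ on $\overline\Omega$ but $\lambda(\Gamma)=0$ so this is the same as $\alpha\lambda$ on $\Omega$; the Hamza argument is purely local and the enlargement of the test-function space does not affect the pointwise a.e.\ argument used to identify the null sequence, so closability carries over. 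For $\Gamma$ with $\delta=1$ one works in local charts: $C^1(\Gamma)$ functions are, in the coordinates of Remark~\ref{remimpl}, $C^1$ functions on $V'$, the intrinsic gradient norm $\lvert\nabla_\Gamma f\rvert^2=g^{ij}\partial_i f\,\partial_j f$ is comparable to the Euclidean gradient norm on each chart (the metric $G$ of Remark~\ref{remriemann} is continuous and, for a Lipschitz boundary, bounded and uniformly elliptic a.e.\ on relatively compact charts), and $d\sigma=\sqrt g\,dy'$ with $\sqrt g$ bounded above and below; hence the Hamza criterion applies chart-by-chart to $\beta\sqrt g$ against Lebesgue measure, and a partition of unity glues the local statements into global closability on $L^2(\Gamma;\beta\sigma)$.

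Next, the properties of the closures $(\mathcal{E}_\Omega,D(\mathcal{E}_\Omega))$ and $(\mathcal{E}_\Gamma,D(\mathcal{E}_\Gamma))$. Symmetry and positivity are inherited from the form on the core. The Markov (contraction) property: for $f\in\mathcal{D}_\Omega$ and the unit contraction $f\mapsto (0\vee f)\wedge 1$, one has the pointwise a.e.\ inequality $\lvert\nabla((0\vee f)\wedge 1)\rvert\le\lvert\nabla f\rvert$; approximating the contraction by smooth functions of $f$ and passing to the limit in the closed form shows the normal contraction operates, so $(\mathcal{E}_\Omega,D(\mathcal{E}_\Omega))$ is a Dirichlet form; the same works on charts for $\Gamma$. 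Conservativeness: the constant function $1$ lies in $\mathcal{D}_\Omega=C^1(\overline\Omega)$ with $\mathcal{E}_\Omega(1,1)=0$, and $\mu(\overline\Omega)=\mu(\Omega)<\infty$ since $\alpha\in L^1$; a finite reference measure together with $1$ belonging to the domain with zero energy gives conservativeness. Strong locality: if $f,g\in D(\mathcal{E}_\Omega)$ with $g$ constant on a neighbourhood of $\operatorname{supp}(f)$, then on the core $\nabla g=0$ there and $\mathcal{E}_\Omega(f,g)=0$; this passes to the closure by approximation, which is the standard definition of strong locality. Regularity: $\mathcal{D}_\Omega=C^1(\overline\Omega)$ is by construction a subset of $C(\overline\Omega)$ that is dense in $C(\overline\Omega)$ in the sup-norm (Stone–Weierstraß) and dense in $D(\mathcal{E}_\Omega)$ in the form norm (it is the defining core), so it is a core consisting of continuous functions — exactly regularity of the Dirichlet form on the state space $\overline\Omega$. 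For $\Gamma$ one uses that $C^1(\Gamma)$ is dense in $C(\Gamma)$ (again Stone–Weierstraß on the compact metric space $\Gamma$) and is the defining core of $(\mathcal{E}_\Gamma,D(\mathcal{E}_\Gamma))$.

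The main obstacle is entirely in the $\Gamma$-part, and only under $\delta=1$: one must make sure that "Hamza on $\Gamma$ with weight $\beta$ and measure $\sigma$" really is equivalent to "Hamza in charts with weight $\beta\sqrt g$ and Lebesgue measure," i.e.\ that the auxiliary set $R_\beta(\Gamma)$ defined intrinsically via $\sigma$-balls matches the chartwise set defined via Euclidean balls and $\beta\sqrt g$. This is where the Lipschitz regularity of $\Gamma$ is used: it guarantees that the metric $G$ (hence $\sqrt g$ and the intrinsic vs.\ Euclidean distance) is two-sided bounded on each relatively compact coordinate patch, so the two notions of "locally integrable reciprocal weight" coincide and the chartwise Hamza condition is equivalent to Condition~\ref{condhamza}; everything else is a routine transcription of the Euclidean argument through finitely many charts and a partition of unity. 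Once closability is secured, Dirichlet-form property, conservativeness, strong locality and regularity are all soft consequences of $\overline\Omega$ (resp.\ $\Gamma$) being compact, $\mu$ finite, and $1$ (resp.\ the given $C^1$ core) belonging to the domain.
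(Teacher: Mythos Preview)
Your proposal is correct and follows essentially the same approach as the paper. The paper's proof is extremely terse: for closability and the Dirichlet form property it simply cites Ma--R\"ockner, Chapter~2, Section~2, Example~a) together with Remark~2.3 (which is exactly the Hamza criterion you invoke), and for the remaining properties (regularity, strong locality, conservativeness) it defers to the subsequent propositions proving these for the combined form $(\mathcal{E},D(\mathcal{E}))$, whose arguments---Stone--Weierstra{\ss} for regularity, vanishing of the gradient on the core for strong locality, and $1\in\mathcal{D}$ with zero energy for recurrence---are precisely what you have written out. Your treatment is in fact more careful than the paper's on one point: you explicitly address why the Hamza criterion, stated for Euclidean domains, transfers to the manifold $\Gamma$ via charts using the two-sided boundedness of $\sqrt{g}$ under the Lipschitz assumption, whereas the paper absorbs this into the citation of \cite{MR92}.
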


\begin{proof}
The symmetric densely defined bilinear forms are closable and its closures are symmetric Dirichlet forms by \cite[Chap. 2, Sect. 2, Example a)]{MR92} (see in particular Remark 2.3 of the reference). The remaining properties follow exactly like in the following proofs for the closure of $(\mathcal{E},\mathcal{D})$.
\end{proof}

\begin{proposition}
Suppose that Condition \ref{conddensity} and Condition \ref{condhamza} are satisfied. Then $(\mathcal{E},\mathcal{D})$ is closable on $L^2(\overline{\Omega};\mu)$. We denote its closure by $(\mathcal{E},D(\mathcal{E}))$.
\end{proposition}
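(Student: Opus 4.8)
The plan is to reduce closability of $(\mathcal{E},\mathcal{D})$ on $L^2(\overline{\Omega};\mu)$ to the closability of the two pieces $(\mathcal{E}_\Omega,\mathcal{D}_\Omega)$ and $(\mathcal{E}_\Gamma,\mathcal{D}_\Gamma)$, which is already available from Lemma \ref{lemclosable}. Recall that $\mu=\alpha\lambda+\beta\sigma$ splits orthogonally as $L^2(\overline{\Omega};\mu)=L^2(\overline{\Omega};\alpha\lambda)\oplus L^2(\Gamma;\beta\sigma)$ (using $\lambda(\Gamma)=0$), and that $\mathcal{E}=\mathcal{E}_\Omega+\delta\,\mathcal{E}_\Gamma$ with the two quadratic forms acting on the ``interior'' and ``boundary'' components respectively. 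The subtlety is that a single function $f\in C^1(\overline{\Omega})$ contributes to \emph{both} components simultaneously, and its restriction $e(f)=f|_\Gamma$ is not free: it is the trace of $f$. So the form is not simply the direct sum of two independent forms on a product Hilbert space, and one has to be careful when passing to limits.

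First I would recall the criterion for closability: $(\mathcal{E},\mathcal{D})$ is closable on $L^2(\overline{\Omega};\mu)$ if and only if for every sequence $(f_n)_{n\in\mathbb{N}}\subset\mathcal{D}$ with $f_n\to 0$ in $L^2(\overline{\Omega};\mu)$ and $(f_n)$ $\mathcal{E}$-Cauchy, one has $\mathcal{E}(f_n,f_n)\to 0$. So take such a sequence. Since $\Vert f_n\Vert_{L^2(\overline{\Omega};\mu)}^2=\Vert f_n\Vert_{L^2(\overline{\Omega};\alpha\lambda)}^2+\Vert f_n|_\Gamma\Vert_{L^2(\Gamma;\beta\sigma)}^2$, the hypothesis $f_n\to 0$ in $L^2(\overline{\Omega};\mu)$ gives \emph{both} $f_n\to 0$ in $L^2(\overline{\Omega};\alpha\lambda)$ \emph{and} $f_n|_\Gamma\to 0$ in $L^2(\Gamma;\beta\sigma)$. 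Moreover, from $\mathcal{E}(f_n-f_m,f_n-f_m)=\mathcal{E}_\Omega(f_n-f_m,f_n-f_m)+\delta\,\mathcal{E}_\Gamma(f_n|_\Gamma-f_m|_\Gamma,f_n|_\Gamma-f_m|_\Gamma)$ and the nonnegativity of both summands, the sequence $(f_n)$ is $\mathcal{E}_\Omega$-Cauchy in $\mathcal{D}_\Omega$, and $(f_n|_\Gamma)$ is $\mathcal{E}_\Gamma$-Cauchy in $\mathcal{D}_\Gamma$. Now apply the closability of $(\mathcal{E}_\Omega,\mathcal{D}_\Omega)$ on $L^2(\overline{\Omega};\alpha\lambda)$: since $f_n\to 0$ there and $(f_n)$ is $\mathcal{E}_\Omega$-Cauchy, we get $\mathcal{E}_\Omega(f_n,f_n)\to 0$. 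If $\delta=1$, apply the closability of $(\mathcal{E}_\Gamma,\mathcal{D}_\Gamma)$ on $L^2(\Gamma;\beta\sigma)$ to the sequence $e(f_n)=f_n|_\Gamma\in\mathcal{D}_\Gamma$, which converges to $0$ in $L^2(\Gamma;\beta\sigma)$ and is $\mathcal{E}_\Gamma$-Cauchy, yielding $\mathcal{E}_\Gamma(f_n|_\Gamma,f_n|_\Gamma)\to 0$. Adding, $\mathcal{E}(f_n,f_n)=\mathcal{E}_\Omega(f_n,f_n)+\delta\,\mathcal{E}_\Gamma(f_n|_\Gamma,f_n|_\Gamma)\to 0$, which is exactly what closability requires.

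The only point that needs a line of justification is that $(f_n|_\Gamma)_{n\in\mathbb{N}}$ genuinely lies in $\mathcal{D}_\Gamma=C^1(\Gamma)$ and that the $\mathcal{E}_\Gamma$-seminorm of $f_n|_\Gamma$ computed via the trace agrees with the $\mathcal{E}_\Gamma$-seminorm appearing in $\mathcal{E}(f_n,f_n)$ — this is immediate from the identity $e(\mathcal{D})=\mathcal{D}_\Gamma$ and the definition $\nabla_\Gamma f = P\nabla f$ for $f\in C^1(\overline{\Omega})$ noted just before the Proposition. I do not expect any real obstacle here; the argument is essentially the observation that the closability criterion ``decouples'' along the orthogonal decomposition of $L^2(\overline{\Omega};\mu)$ because $\mathcal{E}$ is a sum of two forms each of which sees only one of the two components (the interior form sees $f$ through its $\alpha\lambda$-class, the boundary form sees $f$ only through its trace). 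The reason we may treat the trace map as simply ``passing to the boundary component'' is precisely that $\lambda(\Gamma)=0$, so nothing from the interior $L^2$-class constrains the boundary $L^2$-class and vice versa; the coupling lives entirely in the form domain $\mathcal{D}$, not in the Hilbert space, and is harmless for the limiting argument. Hence $(\mathcal{E},\mathcal{D})$ is closable on $L^2(\overline{\Omega};\mu)$.
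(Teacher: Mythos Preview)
Your proof is correct and follows essentially the same approach as the paper: take an $\mathcal{E}$-Cauchy sequence $(f_n)\subset\mathcal{D}$ converging to $0$ in $L^2(\overline{\Omega};\mu)$, observe that this forces $f_n\to 0$ in both $L^2(\overline{\Omega};\alpha\lambda)$ and $L^2(\Gamma;\beta\sigma)$ and that $(f_n)$ is Cauchy for both $\mathcal{E}_\Omega$ and $\mathcal{E}_\Gamma$, then invoke Lemma \ref{lemclosable} on each piece. The paper's version is slightly terser and omits your additional commentary on the trace map and the orthogonal decomposition, but the logical skeleton is identical.
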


\begin{proof}
Let $(f_k)_{k \in \mathbb{N}}$ be a Cauchy sequence in $\mathcal{D}=C^1(\overline{\Omega})$ with respect to $\mathcal{E}$, i.e., 
\[ \mathcal{E}(f_k-f_l,f_k-f_l) \rightarrow 0 \ \text{ as } k,l \rightarrow \infty. \]
Moreover, assume that $(f_k)_{k \in \mathbb{N}}$ converges to $0$ in $L^2(\overline{\Omega};\mu)$. We have to show that $\mathcal{E}(f_k,f_k) \rightarrow 0$ as $k \rightarrow \infty$.\\
Since $(f_k)_{k \in \mathbb{N}}$ is a Cauchy sequence with respect to $\mathcal{E}$, it is also a Cauchy sequence with respect to $\mathcal{E}_{\Omega}$ (and $\mathcal{E}_{\Gamma}$ if $\delta=1$). Moreover, the convergence of  $(f_k)_{k \in \mathbb{N}}$ to $0$ in $L^2(\overline{\Omega};\mu)$ implies by definition the convergence to $0$ in $L^2(\overline{\Omega};\alpha \lambda)$ and $L^2(\Gamma;\beta \sigma)$. Therefore, we get $\mathcal{E}_{\Omega}(f_k,f_k) \rightarrow 0$ (and ${\mathcal{E}_{\Gamma}(f_k,f_k) \rightarrow 0}$ if $\delta=1$) as $k \rightarrow \infty$ by Lemma \ref{lemclosable}. Hence,
\[ \mathcal{E}(f_k,f_k) = \mathcal{E}_{\Omega}(f_k,f_k) + \delta ~\mathcal{E}_{\Gamma}(f_k,f_k) \rightarrow 0 \ \text{ as } k \rightarrow \infty. \]
\end{proof}

\begin{proposition}
Suppose that Condition \ref{conddensity} and Condition \ref{condhamza} are satisfied. Then $(\mathcal{E},D(\mathcal{E}))$ is a symmetric, regular Dirichlet form.
\end{proposition}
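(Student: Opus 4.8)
The plan is to verify the three defining properties of a regular Dirichlet form in turn: (a) that $(\mathcal{E},D(\mathcal{E}))$ is a Dirichlet form (i.e. it is closed, which we already know, and the unit contraction operates), (b) that it is regular, and (c) symmetry is immediate from the symmetry of $\mathcal{E}$ on the form core $\mathcal{D}=C^1(\overline{\Omega})$, which is preserved under closure.

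For the Markovian (unit-contraction) property: given $f \in \mathcal{D} = C^1(\overline{\Omega})$, I would approximate the normal contraction $f^{\#} := (0 \vee f) \wedge 1$ by a sequence of $C^1$ functions $\phi_n \circ f$, where $\phi_n \in C^1(\mathbb{R})$ are smooth approximations of the map $t \mapsto (0 \vee t)\wedge 1$ with $0 \le \phi_n' \le 1$, $\phi_n(t) \to (0\vee t)\wedge 1$ pointwise and $|\phi_n(t)| \le |t| + 1$. Then $\phi_n \circ f \in \mathcal{D}$, and by the chain rule $\nabla(\phi_n\circ f) = (\phi_n'\circ f)\,\nabla f$ on $\Omega$ and $\nabla_\Gamma(\phi_n\circ f) = (\phi_n'\circ f)\,\nabla_\Gamma f$ on $\Gamma$, so $\mathcal{E}(\phi_n\circ f, \phi_n\circ f) \le \mathcal{E}(f,f)$ uniformly in $n$, while $\phi_n\circ f \to f^{\#}$ in $L^2(\overline\Omega;\mu)$ by dominated convergence. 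By the standard Banach–Saks / Cesàro-mean argument for Dirichlet forms (e.g.\ the criterion in \cite[Chap.\ I]{MR92}), a convex combination of the $\phi_n\circ f$ converges to $f^{\#}$ in the form norm, whence $f^{\#} \in D(\mathcal{E})$ and $\mathcal{E}(f^{\#},f^{\#}) \le \mathcal{E}(f,f)$. A further closure/approximation step extends this from the core $\mathcal{D}$ to all of $D(\mathcal{E})$: if $g \in D(\mathcal{E})$ with approximating $C^1$-sequence $g_k$, then $g_k^{\#}$ is $\mathcal{E}_1$-bounded and converges to $g^{\#}$ in $L^2(\overline\Omega;\mu)$, and again a Cesàro-mean subsequence gives $g^{\#}\in D(\mathcal{E})$ with the contraction estimate. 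Hence $(\mathcal{E},D(\mathcal{E}))$ is a Dirichlet form.

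For regularity, recall that $\mathcal{D} = C^1(\overline{\Omega})$ is by construction $\mathcal{E}_1$-dense in $D(\mathcal{E})$; so it suffices to show that $C^1(\overline\Omega)$ contains a subset that is both uniformly dense in $C_0(\overline\Omega) = C(\overline\Omega)$ (as $\overline\Omega$ is compact) and $\mathcal{E}_1$-dense in $D(\mathcal{E})$. Since $C^1(\overline\Omega)$ is itself $\sup$-norm dense in $C(\overline\Omega)$ by Stone–Weierstraß — exactly as used in the density proposition above — and is a core for $\mathcal{E}$ by definition of the closure, the set $C^1(\overline\Omega)$ serves directly as a core witnessing regularity. (One should note that $\mu$ has full topological support, which follows from $\varrho > 0$ $(\lambda+\sigma)$-a.e.\ together with $\Omega$ being a domain and $\Gamma = \partial\Omega$ its boundary, so that $\mathrm{supp}(\mu) = \overline\Omega$; this is needed for the embedding $D(\mathcal{E}) \subset L^2(\overline\Omega;\mu)$ to interact correctly with $C(\overline\Omega)$.) Symmetry is inherited from $\mathcal{E}$ on $\mathcal{D}$.

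I expect the main obstacle to be the Markovian property, specifically the passage from the core $\mathcal{D}$ to all of $D(\mathcal{E})$: one must be careful that the chain rule and the contraction estimate, which are transparent for $C^1$ functions, survive the closure — the cleanest route is the Cesàro-mean (Banach–Saks) argument rather than trying to make sense of $\nabla_\Gamma$ of a general element of $D(\mathcal{E})$ directly. Everything else (regularity via the built-in core, symmetry) is essentially bookkeeping once that point is settled.
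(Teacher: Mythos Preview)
Your proposal is correct and follows essentially the same approach as the paper. The paper's proof is simply more terse: for the Markov property it invokes \cite[Chap.~I, Proposition~4.10]{MR92} together with the chain rule on $\mathcal{D}=C^1(\overline{\Omega})$---which is precisely the criterion you unpack via the smooth approximations $\phi_n$ and the Banach--Saks/Ces\`aro argument---and for regularity it uses Stone--Weierstra{\ss} and the fact that $\mathcal{D}$ is a core, exactly as you do.
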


\begin{proof}
The Markov property follows as in \cite[Chap.2, Sect. 2, Example c)]{MR92} by \cite[Chap. 1, Proposition 4.10]{MR92} and the chain rule. Hence, $(\mathcal{E},D(\mathcal{E}))$ is a symmetric Dirichlet Form. By the Stone-Weierstra{\ss} theorem, it holds that $C^{\infty}(\overline{\Omega})$ is dense in $C(\overline{\Omega})$ with respect to $\Vert \cdot \Vert_{\sup}$. Furthermore, $\mathcal{D}$ is dense in $D(\mathcal{E})$ with respect to the $\mathcal{E}_1$-norm. Since $C^{\infty}(\overline{\Omega}) \subset \mathcal{D} \subset D(\mathcal{E}) \cap C(\overline{\Omega})$, we obtain that $(\mathcal{E},D(\mathcal{E}))$ is also regular.
\end{proof}


\begin{proposition}
Suppose that Condition \ref{conddensity} and Condition \ref{condhamza} are satisfied. Then the symmetric, regular Dirichlet form $(\mathcal{E},D(\mathcal{E}))$ is strongly local and recurrent.
\end{proposition}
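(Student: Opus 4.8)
The plan is to verify the two properties — strong locality and recurrence — separately, since both reduce to already-established facts about the constituent forms $(\mathcal{E}_\Omega, D(\mathcal{E}_\Omega))$ and $(\mathcal{E}_\Gamma, D(\mathcal{E}_\Gamma))$ together with the decomposition $\mathcal{E} = \mathcal{E}_\Omega + \delta\, \mathcal{E}_\Gamma$. For strong locality, I would argue directly on the core $\mathcal{D} = C^1(\overline{\Omega})$: if $f, g \in \mathcal{D}$ with $g$ constant on a neighborhood of $\operatorname{supp} f$, then $\nabla g = 0$ on that neighborhood and also $\nabla_\Gamma g = P \nabla g = 0$ there, so both integrands in \eqref{defform} vanish and $\mathcal{E}(f,g) = 0$. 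Since $\mathcal{D}$ is dense in $D(\mathcal{E})$ in the $\mathcal{E}_1$-norm and strong locality is stable under taking closures (one approximates arbitrary $f,g \in D(\mathcal{E})$ with the support condition by truncating and using the contraction properties of the Dirichlet form, exactly as in the proof for $\mathcal{E}_\Omega$ referenced in Lemma \ref{lemclosable}), strong locality transfers to $(\mathcal{E}, D(\mathcal{E}))$. Alternatively, and perhaps more cleanly, I would invoke that a sum of two strongly local Dirichlet forms is strongly local: $\mathcal{E}_\Gamma$ as a form on $L^2(\overline{\Omega};\mu)$ is obtained by composing the restriction map $e$ with the strongly local form on $L^2(\Gamma;\beta\sigma)$, and this composition preserves strong locality because $\operatorname{supp} f$ shrinks under restriction.

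For recurrence, the key observation is that $\mathbbm{1}_{\overline{\Omega}} \in D(\mathcal{E})$ — indeed the constant function $1$ lies in $\mathcal{D} = C^1(\overline{\Omega})$ — and $\mathcal{E}(1,1) = 0$, since $\nabla 1 = 0$ and $\nabla_\Gamma 1 = 0$. Because $\overline{\Omega}$ is bounded and $\mu(\overline{\Omega}) = \int_\Omega \alpha\, d\lambda + \int_\Gamma \beta\, d\sigma < \infty$ by Condition \ref{conddensity} (both $\alpha \in L^1(\Omega;\lambda)$ and $\beta \in L^1(\Gamma;\sigma)$), the measure $\mu$ is finite. A symmetric Dirichlet form with finite underlying measure that admits the constant function $1$ in its domain with $\mathcal{E}(1,1) = 0$ is conservative, and a conservative Dirichlet form on a finite measure space is recurrent — this is the standard criterion (e.g. Fukushima–Oshima–Takeda, Theorem 1.6.3, or the observation that $1$ serves as the required sequence $u_n \equiv 1$ with $\mathcal{E}(u_n,u_n) \to 0$). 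I would spell this out: take $u_n = 1$ for all $n$; then $u_n \to 1$ $\mu$-a.e. and $\mathcal{E}(u_n, u_n) = 0 \to 0$, which is precisely the recurrence criterion.

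The main obstacle, such as it is, lies in making the strong-locality transfer to the closure fully rigorous: one must check that the support condition "$g$ constant on a neighborhood of $\operatorname{supp} f$" interacts correctly with $\mathcal{E}_1$-approximation, which requires the standard machinery of normal contractions and the fact that one may reduce to bounded $f, g$ and localize. Since this is carried out in detail for $\mathcal{E}_\Omega$ (cited in Lemma \ref{lemclosable} via \cite{MR92}), I would simply remark that the argument is identical, the only new ingredient being that the boundary integral also vanishes under the same support hypothesis because $\nabla_\Gamma = P\nabla$ is a local operator on $\Gamma$. Everything else — finiteness of $\mu$, membership of $1$ in the core, vanishing of $\mathcal{E}(1,1)$ — is immediate from the definitions and Condition \ref{conddensity}.
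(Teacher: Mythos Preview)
Your proposal is correct and follows essentially the same route as the paper: verify strong locality on the core $\mathcal{D}=C^1(\overline{\Omega})$ by observing that both integrands in \eqref{defform} vanish under the support hypothesis, then use $1\in\mathcal{D}$ with $\mathcal{E}(1,1)=0$ (together with finiteness of $\mu$) for recurrence via \cite[Theorem 1.6.3]{FOT94}. The only cosmetic difference is that the paper dispatches the passage from the core to the closure by a direct citation of \cite[Theorem 3.1.1 and Exercise 3.1.1]{FOT94}, whereas you sketch the approximation-and-truncation argument behind that result; both are acceptable.
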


\begin{proof}
Using \cite[Theo. 3.1.1]{FOT94} and \cite[Exercise 3.1.1]{FOT94} it is sufficient to show the strong local property for elements in $\mathcal{D}$. Therefore, let $f,g \in \mathcal{D}$ such that $g$ is constant on some open neighborhood $U$ of $\text{supp}(f)$ (in the trace topology of $\overline{\Omega}$). Then
\begin{align*}
\mathcal{E}(f,g)=& \frac{1}{2} \int_{\Omega} (\nabla f, \nabla g) ~ \alpha d\lambda + \frac{\delta}{2} \int_{\Gamma} (\nabla_{\Gamma} f, \nabla_{\Gamma} g) ~ \beta d\sigma \\
=& \frac{1}{2} \int_{\Omega \cap \text{supp}(f)} (\nabla f, \nabla g) ~ \alpha d\lambda + \frac{1}{2} \int_{\Omega \backslash \text{supp}(f)} (\nabla f, \nabla g) ~ \alpha d\lambda\\
 &+ \frac{\delta}{2} \int_{\Gamma \cap \text{supp}(f)} (\nabla_{\Gamma} f, \nabla_{\Gamma} g) ~ \beta  d\sigma + \frac{\delta}{2} \int_{\Gamma \backslash \text{supp}(f)} (\nabla_{\Gamma} f, \nabla_{\Gamma} g) ~ \beta d\sigma \\
=& 0,
\end{align*}
because each summand is zero, since the integrals are defined over sets where either $f$ or $g$ is constant. Hence, $(\mathcal{E},D(\mathcal{E}))$ is strongly local.
Clearly, $\mathbbm{1}_{\overline{\Omega}} \in \mathcal{D} \subset D(\mathcal{E})$ and $\mathcal{E}(\mathbbm{1}_{\overline{\Omega}},\mathbbm{1}_{\overline{\Omega}})=0$. Therefore, $(\mathcal{E},D(\mathcal{E}))$ is also recurrent.
\end{proof}

By summarizing the preceding results, we get the following theorem:

\begin{theorem}
Assume Condition \ref{conddensity} and Condition \ref{condhamza}. Then the symmetric and positive definite bilinear form $(\mathcal{E},D)$ is denesly defined and closable on $L^2(\overline{\Omega};\mu)$. Its closure $(\mathcal{E},D(\mathcal{E}))$ is a recurrent, strongly local, regular, symmetric Dirichlet form on $L^2(\overline{\Omega};\mu)$.
\end{theorem}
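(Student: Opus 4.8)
The statement is a synthesis of the propositions established above, so the plan is simply to collect them in the right order and to check that no property has been dropped along the way. Write $\mathcal D := C^1(\overline\Omega)$ for the form domain from (\ref{defform}).

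First, symmetry and non‑negativity of $(\mathcal E,\mathcal D)$ are read off directly from (\ref{defform}): both integrands, $(\nabla f,\nabla g)\,\alpha$ on $\Omega$ and $(\nabla_\Gamma f,\nabla_\Gamma g)\,\beta$ on $\Gamma$, are symmetric in $f,g$, and for $g=f$ one gets $\mathcal E(f,f)=\tfrac12\int_\Omega|\nabla f|^2\alpha\,d\lambda+\tfrac\delta2\int_\Gamma|\nabla_\Gamma f|^2\beta\,d\sigma\ge 0$, using $\alpha>0$ $\lambda$‑a.e.\ and $\beta>0$ $\sigma$‑a.e.\ from Condition \ref{conddensity}. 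Next, $\mathcal D=C^1(\overline\Omega)$ is dense in $L^2(\overline\Omega;\mu)$: indeed $C^\infty(\overline\Omega)\subseteq C^1(\overline\Omega)$, and $C^\infty(\overline\Omega)$ is dense in $L^2(\overline\Omega;\mu)$ by the first Proposition above (Baire measure on the compact set $\overline\Omega$ together with Stone--Weierstra{\ss}). Closability on $L^2(\overline\Omega;\mu)$ is then exactly the content of the earlier Proposition, whose proof decomposes a Cauchy sequence into its contributions on $\Omega$ and on $\Gamma$ and applies Lemma \ref{lemclosable}; that lemma in turn rests on the Hamza‑type Condition \ref{condhamza} and the classical closability criterion as in \cite[Chap.~2, Sect.~2, Example a)]{MR92}. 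This yields the closure $(\mathcal E,D(\mathcal E))$, which is a symmetric regular Dirichlet form by the subsequent Proposition (Markov property via the chain rule and \cite[Chap.~1, Prop.~4.10]{MR92}; regularity because $C^\infty(\overline\Omega)\subseteq\mathcal D\subseteq D(\mathcal E)\cap C(\overline\Omega)$ is both uniformly dense in $C(\overline\Omega)$ and $\mathcal E_1$‑dense in $D(\mathcal E)$), and it is strongly local and recurrent by the last Proposition (strong locality checked on $C^1$ via disjoint supports; recurrence since $\mathbbm 1_{\overline\Omega}\in D(\mathcal E)$ with $\mathcal E(\mathbbm 1_{\overline\Omega},\mathbbm 1_{\overline\Omega})=0$). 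Assembling these four points gives the theorem.

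The only genuinely substantive ingredient here is the closability, and the point to be careful about is that the reference measure $\mu$ charges the boundary, so closability on $L^2(\overline\Omega;\mu)$ is not automatic from closability of the interior form on $L^2(\Omega;\alpha\lambda)$. This is precisely why one writes $\mathcal E=\mathcal E_\Omega+\delta\,\mathcal E_\Gamma$ and observes that convergence to $0$ in $L^2(\overline\Omega;\mu)$ forces convergence to $0$ in both $L^2(\overline\Omega;\alpha\lambda)$ and $L^2(\Gamma;\beta\sigma)$ separately, so that the two closability assertions of Lemma \ref{lemclosable} apply independently and add up. In the write‑up I would just make explicit that it is this decomposition, rather than any interplay between the interior and boundary parts, that does the work.
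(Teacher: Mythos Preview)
Your proposal is correct and matches the paper's approach exactly: the theorem is stated in the paper as a summary of the preceding propositions with no additional argument, and you have collected precisely those propositions (dense definition, closability via the decomposition $\mathcal{E}=\mathcal{E}_\Omega+\delta\,\mathcal{E}_\Gamma$ and Lemma \ref{lemclosable}, the Markov/regularity proposition, and the strong locality/recurrence proposition) in the right order. Your closing remark on why the $\Omega$/$\Gamma$ decomposition is the crux of closability is accurate and worth keeping.
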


By the theory of Dirichlet forms, we obtain immediately the following theorem.  For details see e.g. \cite[Chap. V, Theorem 1.11]{MR92} or \cite[Theorem 7.2.2 and Exercise 4.5.1]{FOT94}. We remark that the definitions of capacities (and hence, of exceptional sets) used in the textbooks \cite{FOT94} and \cite{MR92} are introduced in different ways, but that the defintions coincide in our setting (see \cite[Chap. III, Remark 2.9 and Exercise 2.10]{MR92}). $(T_t)_{t >0}$ denotes the sub-Markovian strongly continuous contraction semigroup on $L^2(\overline{\Omega};\mu)$ corresponding to $(\mathcal{E},D(\mathcal{E}))$.

\begin{theorem} \label{thmdiff}
Suppose that Condition \ref{conddensity} and Condition \ref{condhamza} are satisfied. Then there exists a conservative diffusion process (i.e. a strong Markov process with continuous sample paths and infinite life time)
\[ \mathbf{M}:=\big( \mathbf{\Omega}, \mathcal{F}, (\mathcal{F}_t)_{t \geq 0}, (\mathbf{X}_t)_{t \geq 0}, (\Theta_t)_{t \geq 0}, (\mathbf{P}_x)_{x \in \overline{\Omega}} \big) \]
with state space $\overline{\Omega}$ which is properly associated with $(\mathcal{E},D(\mathcal{E}))$, i.e., for all ($\mu$-versions of) $f \in \mathcal{B}_b(\overline{\Omega}) \cap L^2(\overline{\Omega};\mu)$ and all $t >0$ the function
\[ \overline{\Omega} \ni x \mapsto p_t f(x):= \mathbb{E}_x \big(f(\mathbf{X}_t) \big) := \int_{\Omega} f(\mathbf{X}_t) d\mathbf{P}_x \in \mathbb{R} \]
is a quasi continuous version of $T_t f$. $\mathbf{M}$ is up to $\mu$-equivalence unique. In particular, $\mathbf{M}$ is $\mu$-symmetric, i.e.,
\[ \int_{\overline{\Omega}} p_t f ~ g ~d\mu = \int_{\overline{\Omega}} f ~ p_t g ~d\mu \ \text{ for all } f,g \in \mathcal{B}_b(\overline{\Omega}) \ \text{ and all } t >0, \]
and has $\mu$ as invariant measure, i.e.,
\[ \int_{\overline{\Omega}} p_t f~ d\mu = \int_{\overline{\Omega}} f~ d\mu \ \text{ for all } f \in \mathcal{B}_b(\overline{\Omega}) \ \text{ and all } t >0. \]
\end{theorem}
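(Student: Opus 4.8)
\emph{Proof idea.} The plan is to read off the statement from the general correspondence between regular symmetric Dirichlet forms and Markov processes, using the structural properties of $(\mathcal{E},D(\mathcal{E}))$ already established: it is a regular, symmetric, strongly local, recurrent Dirichlet form on the compact space $\overline{\Omega}$, and its reference measure $\mu$ is finite.

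First I would invoke the fundamental existence and association theorem in the form of \cite[Chap. V, Theorem 1.11]{MR92} (equivalently \cite[Theorem 7.2.2]{FOT94}): since $(\mathcal{E},D(\mathcal{E}))$ is a regular symmetric Dirichlet form, there is a $\mu$-symmetric Hunt process (in fact a special standard process) $\mathbf{M}$ with state space $\overline{\Omega}$ that is properly associated with $(\mathcal{E},D(\mathcal{E}))$, i.e. $p_tf$ is a quasi continuous $\mu$-version of $T_tf$ for every bounded Borel $f$, and $\mathbf{M}$ is unique up to $\mu$-equivalence. Here one uses that the notions of capacity and of exceptional set in \cite{FOT94} and \cite{MR92} agree in the present setting, as noted before the statement.

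Next I would upgrade $\mathbf{M}$ to a diffusion. Because $(\mathcal{E},D(\mathcal{E}))$ is strongly local and regular, \cite[Exercise 4.5.1]{FOT94} (together with the characterization \cite[Theorem 4.5.1]{FOT94}) allows $\mathbf{M}$ to be chosen with almost surely continuous sample paths up to its lifetime, i.e. as a diffusion in the stated sense. Conservativeness then follows from recurrence: a recurrent Dirichlet form is conservative, so $T_t\mathbbm{1}_{\overline{\Omega}}=\mathbbm{1}_{\overline{\Omega}}$ in $L^2(\overline{\Omega};\mu)$, and via the proper association this gives $p_t\mathbbm{1}_{\overline{\Omega}}=1$ quasi everywhere; since $\overline{\Omega}$ is compact there is no explosion to handle, so the lifetime is infinite $\mathbf{P}_x$-a.s. for q.e. $x$, and after the usual modification on the exceptional set, for all $x\in\overline{\Omega}$. (Alternatively one notes directly that $\mathbbm{1}_{\overline{\Omega}}\in D(\mathcal{E})$ with $\mathcal{E}(\mathbbm{1}_{\overline{\Omega}},\mathbbm{1}_{\overline{\Omega}})=0$ and $\mu(\overline{\Omega})<\infty$, which yields conservativeness.)

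Finally, $\mu$-symmetry of $\mathbf{M}$ is inherited from the symmetry of $(T_t)_{t>0}$ on $L^2(\overline{\Omega};\mu)$: for $f,g\in\mathcal{B}_b(\overline{\Omega})$ we have $\int p_tf\, g\, d\mu=\int T_tf\, g\, d\mu=\int f\, T_tg\, d\mu=\int f\, p_tg\, d\mu$, using the quasi continuous identification and that $\mathcal{B}_b(\overline{\Omega})\cap L^2(\overline{\Omega};\mu)=\mathcal{B}_b(\overline{\Omega})$ since $\mu$ is finite; invariance of $\mu$ is then immediate from symmetry and conservativeness, $\int p_tf\, d\mu=\int f\, p_t\mathbbm{1}_{\overline{\Omega}}\, d\mu=\int f\, d\mu$. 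The only point needing care, and the place where regularity of the form is genuinely used, is the passage from assertions valid "$\mu$-a.e." or "for q.e. $x$" to assertions formulated for the concrete process on all of $\overline{\Omega}$; this is exactly what "properly associated" provides, and beyond this bookkeeping I do not expect a real obstacle.
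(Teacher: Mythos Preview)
Your proposal is correct and follows the same route as the paper: the paper does not give an independent argument but simply observes that the theorem follows from the general theory of regular symmetric Dirichlet forms, citing \cite[Chap.~V, Theorem~1.11]{MR92} and \cite[Theorem~7.2.2, Exercise~4.5.1]{FOT94}, together with the remark that the two notions of capacity coincide in this setting. Your write-up is in fact more explicit than the paper's, spelling out which reference yields which property (existence and proper association, continuous paths via strong locality, conservativeness via recurrence, and the symmetry/invariance of $\mu$), but the underlying approach is identical.
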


\begin{remark} \label{remcanonical}
Note that $\mathbf{M}$ is canonical, i.e., $\mathbf{\Omega}=C(\mathbb{R}_{+},\overline{\Omega})$ and $\mathbf{X}_t(\omega)=\omega(t)$, $\omega \in \mathbf{\Omega}$. 
For each $t \geq 0$ we denote by $\Theta_t:\mathbf{\Omega} \rightarrow \mathbf{\Omega}$ the shift operator defined by $\Theta_t(\omega)=\omega(\cdot +t)$ for $\omega \in \mathbf{\Omega}$ such that $\mathbf{X}_s \circ \Theta_t = \mathbf{X}_{s+t}$ for all $s \geq 0$.
We take into account to extend the setting to $C(\mathbb{R}_{+},\mathbb{R}^d)$ by neglecting paths leaving $\overline{\Omega}$.
\end{remark}

\section{Analysis of the Markov process} \label{sectanapro} 

\subsection{Generators and boundary conditions}

By Friedrichs representation theorem we have the existence of a unique self-adjoint generator $(L,D(L))$ corresponding to $(\mathcal{E},D(\mathcal{E}))$.

\begin{proposition}
Suppose that Condition \ref{conddensity} and Condition \ref{condhamza} are satisfied. Then there exists a unique, positive, self-adjoint, linear operator $(L,D(L))$ on $L^2(\overline{\Omega}; \mu)$ such that
\[ D(L) \subset D(\mathcal{E}) \ \text{ and } \ \mathcal{E}(f,g)=(-Lf,g)_{L^2(\overline{\Omega};\mu)} \ \text{ for all } f \in D(L), \ g \in D(\mathcal{E}).\]
\end{proposition}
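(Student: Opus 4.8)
The plan is to apply the representation theorem for closed, densely defined, symmetric, nonnegative bilinear forms on a Hilbert space, i.e.\ the Friedrichs representation theorem; see e.g.\ \cite[Chap. I]{MR92}. All of its hypotheses have already been verified: $(\mathcal{E},D(\mathcal{E}))$ is densely defined on $L^2(\overline{\Omega};\mu)$, since $C^{\infty}(\overline{\Omega}) \subset \mathcal{D} \subset D(\mathcal{E})$ is dense in $L^2(\overline{\Omega};\mu)$ as shown above; it is symmetric and nonnegative directly from the definition (\ref{defform}); and it is closed, being the closure of $(\mathcal{E},\mathcal{D})$ constructed in Section~\ref{sectprocess}. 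Hence the theorem applies and yields the operator $(L,D(L))$ with $D(L) \subset D(\mathcal{E})$ and $\mathcal{E}(f,g) = (-Lf,g)_{L^2(\overline{\Omega};\mu)}$ for all $f \in D(L)$, $g \in D(\mathcal{E})$.

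If one wishes to argue from scratch, I would equip $D(\mathcal{E})$ with the inner product $\mathcal{E}_1(f,g) := \mathcal{E}(f,g) + (f,g)_{L^2(\overline{\Omega};\mu)}$; closedness of $(\mathcal{E},D(\mathcal{E}))$ makes $(D(\mathcal{E}),\mathcal{E}_1)$ a Hilbert space continuously embedded into $L^2(\overline{\Omega};\mu)$. For each $h \in L^2(\overline{\Omega};\mu)$ the functional $g \mapsto (h,g)_{L^2(\overline{\Omega};\mu)}$ is $\mathcal{E}_1$-bounded on $D(\mathcal{E})$, so by the Riesz representation theorem there is a unique $Gh \in D(\mathcal{E})$ with $\mathcal{E}_1(Gh,g) = (h,g)_{L^2(\overline{\Omega};\mu)}$ for all $g \in D(\mathcal{E})$. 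One then checks that $G$ is a bounded, injective, nonnegative, self-adjoint contraction on $L^2(\overline{\Omega};\mu)$; setting $D(L) := G\big(L^2(\overline{\Omega};\mu)\big)$ and $L := I - G^{-1}$ and unwinding the defining identity for $G$ gives $D(L) \subset D(\mathcal{E})$, the identity $\mathcal{E}(f,g) = (-Lf,g)_{L^2(\overline{\Omega};\mu)}$ for $f \in D(L)$, $g \in D(\mathcal{E})$, and self-adjointness of $L$ (inherited from that of $G$).

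For uniqueness I would note that any operator $(L',D(L'))$ with the two listed properties must agree with $(L,D(L))$: for $f \in D(L')$ put $h := f - L'f$; then $\mathcal{E}_1(f,g) = (h,g)_{L^2(\overline{\Omega};\mu)} = \mathcal{E}_1(Gh,g)$ for every $g \in D(\mathcal{E})$, whence $f = Gh \in D(L)$ and $L'f = Lf$, and self-adjointness of both operators upgrades this inclusion to equality. The positivity asserted in the statement then reads $(-Lf,f)_{L^2(\overline{\Omega};\mu)} = \mathcal{E}(f,f) \geq 0$ for all $f \in D(L)$. There is no genuine obstacle here — the proposition is just the abstract representation theorem specialized to the Dirichlet form of Section~\ref{sectprocess}; the only point deserving a little care is the sign convention, namely that the generator $L$ (for which $T_t = e^{tL}$) is a \emph{negative} operator, so ``positive self-adjoint operator'' has to be read as the positivity of $-L$, equivalently of $\mathcal{E}$.
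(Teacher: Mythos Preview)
Your proposal is correct and matches the paper's approach exactly: the paper simply invokes the Friedrichs representation theorem without further argument, and you do the same while additionally supplying the standard Riesz-representation construction of $G$ and $L = I - G^{-1}$ for completeness. Your remark on the sign convention is apt and worth keeping.
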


\begin{condition} \label{condcont}
$\Gamma$ is $C^2$-smooth. Moreover, $\alpha, \beta \in C(\overline{\Omega})$, $\alpha >0$ $\lambda$-a.e. on $\Omega$, $\beta >0$ $\sigma$-a.e. on $\Gamma$ such that $\sqrt{\alpha} \in H^{1,2}(\Omega)$ and additionally $\sqrt{\beta} \in H^{1,2}(\Gamma)$ if $\delta=1$.
\end{condition}

\begin{remark}
Note that Condition \ref{condcont} implies Condition \ref{conddensity} and Condition \ref{condhamza}. In particular, Condition \ref{condcont} holds if $\alpha, \beta \in C^1(\overline{\Omega})$, $\alpha >0$ $\lambda$-a.e. on $\Omega$ and $\beta >0$ $\sigma$-a.e. on $\Gamma$.
\end{remark}

\begin{proposition} \label{propgen}
Suppose that Condition \ref{condcont} is satisfied and let $f \in C^2(\overline{\Omega})$. Then 
\[ Lf= \frac{1}{2} \Big( \mathbbm{1}_{\Omega}~ \big( \Delta f + (\nabla \ln \alpha, \nabla f) \big) - \mathbbm{1}_{\Gamma} ~ \frac{\alpha}{\beta}~(n, \nabla f) + \delta~ \mathbbm{1}_{\Gamma}~ \big( \Delta_{\Gamma} f + (\nabla_{\Gamma} \ln \beta, \nabla_{\Gamma} f) \Big). \]
\end{proposition}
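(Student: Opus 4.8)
The plan is to compute $\mathcal{E}(f,g)$ for $f \in C^2(\overline{\Omega})$ and $g \in \mathcal{D}=C^1(\overline{\Omega})$ by integration by parts on each of the two pieces of the form, and then to read off $Lf$ from the identity $\mathcal{E}(f,g)=(-Lf,g)_{L^2(\overline{\Omega};\mu)}$, using that $\mathcal{D}$ is dense in $D(\mathcal{E})$ and that $\mu = \alpha\lambda + \beta\sigma$ splits into an interior part and a boundary part. First I would treat the interior term $\tfrac12\int_\Omega (\nabla f,\nabla g)\,\alpha\,d\lambda$. Writing $\alpha\nabla f$ and applying the Gauss–Green formula on $\Omega$ (legitimate since $\Gamma$ is $C^2$ and, after a density argument, for $\sqrt{\alpha}\in H^{1,2}(\Omega)$ one has $\alpha\nabla f = 2\sqrt{\alpha}\,\nabla\sqrt{\alpha}\,f + \sqrt\alpha\,\sqrt\alpha\nabla f$ in $H^{1,1}$), this produces a volume term $-\tfrac12\int_\Omega \operatorname{div}(\alpha\nabla f)\,g\,d\lambda = -\tfrac12\int_\Omega(\Delta f + (\nabla\ln\alpha,\nabla f))\,g\,\alpha\,d\lambda$ plus a boundary term $\tfrac12\int_\Gamma \alpha\,(n,\nabla f)\,g\,d\sigma$.

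Next I would handle the boundary term $\tfrac{\delta}{2}\int_\Gamma(\nabla_\Gamma f,\nabla_\Gamma g)\,\beta\,d\sigma$. Here I apply the divergence theorem on $\Gamma$, i.e. identity \eqref{divergence} extended to $\sqrt\beta\in H^{1,2}(\Gamma)$ via the continuity remark following the definition of $H^{1,k}(\Gamma_0)$, with $\Phi = \beta\nabla_\Gamma f$. This yields $-\tfrac{\delta}{2}\int_\Gamma \operatorname{div}_\Gamma(\beta\nabla_\Gamma f)\,g\,d\sigma = -\tfrac{\delta}{2}\int_\Gamma(\Delta_\Gamma f + (\nabla_\Gamma\ln\beta,\nabla_\Gamma f))\,g\,\beta\,d\sigma$, with no further boundary contribution since $\Gamma$ is a closed manifold (it is the boundary of the bounded domain $\Omega$, hence compact without boundary). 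Collecting the three surviving pieces: the $-\tfrac12\int_\Omega(\cdots)g\,\alpha\,d\lambda$ term is $(\,\cdot\,,g)$ against the interior part of $\mu$; the leftover surface term $\tfrac12\int_\Gamma \alpha(n,\nabla f)\,g\,d\sigma$ can be rewritten as $(-\tfrac12\tfrac{\alpha}{\beta}(n,\nabla f)\,g)\,\beta\,d\sigma$, i.e. against the boundary part of $\mu$ with the sign and factor matching the claimed $-\mathbbm{1}_\Gamma\tfrac{\alpha}{\beta}(n,\nabla f)$ term; and the Laplace–Beltrami piece is $(-\tfrac{\delta}{2}(\Delta_\Gamma f + (\nabla_\Gamma\ln\beta,\nabla_\Gamma f))\,g)\,\beta\,d\sigma$, also against the boundary part of $\mu$. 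Hence $\mathcal{E}(f,g) = (-Lf,g)_{L^2(\overline\Omega;\mu)}$ with $Lf$ exactly as stated, and since this holds for all $g\in\mathcal{D}$ which is $L^2(\overline\Omega;\mu)$-dense, $f\in D(L)$ with that value of $Lf$.

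I expect the main obstacle to be the justification of the two integration-by-parts steps under the weak regularity hypotheses: $\alpha,\beta$ are only continuous with $\sqrt\alpha\in H^{1,2}(\Omega)$, $\sqrt\beta\in H^{1,2}(\Gamma)$, so $\nabla\ln\alpha = 2\nabla\sqrt\alpha/\sqrt\alpha$ is a priori only a measurable function and $\Delta f + (\nabla\ln\alpha,\nabla f)$ must be interpreted carefully (it is in $L^1$ locally where $\alpha>0$, but the product with $\alpha$ is what is integrable against $\lambda$). The clean way around this is to never isolate $\nabla\ln\alpha$: work throughout with $\alpha\nabla f$ (resp. $\beta\nabla_\Gamma f$) as an $H^{1,1}$-vector field — using the product rule $\nabla(\alpha f_i) = 2\sqrt\alpha f_i\nabla\sqrt\alpha + \alpha\nabla f_i$, which lies in $L^1$ since $\sqrt\alpha f_i\nabla\sqrt\alpha\in L^1$ by Cauchy–Schwarz ($\sqrt\alpha\in L^2$ as $\alpha\in L^1$, $\nabla\sqrt\alpha\in L^2$) and $\alpha\nabla f_i$ is bounded — apply the Gauss–Green / divergence theorems to these $L^1$-fields via approximation of $\sqrt\alpha$ by smooth functions in $H^{1,2}$, and only at the very end divide by $\alpha$ (resp. $\beta$) to express the result against $d\mu$, which is harmless where $\varrho>0$, i.e. $\mu$-a.e. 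The remaining steps — the density of $\mathcal{D}$ in $D(\mathcal{E})$ and the splitting of $\mu$ — are already available from the preceding propositions and the definition of $\mu$.
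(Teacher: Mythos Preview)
Your approach is essentially the same as the paper's: integrate by parts on $\Omega$ and on $\Gamma$ separately, rewrite the resulting boundary integral against $\beta\,d\sigma$ by inserting $\alpha/\beta$, and conclude by density of $\mathcal{D}$; your extra care about justifying Gauss--Green for $\alpha\nabla f$ via $\sqrt{\alpha}\in H^{1,2}$ is a welcome addition that the paper leaves implicit. One small correction: at the end you invoke $L^2(\overline{\Omega};\mu)$-density of $\mathcal{D}$, but to pass from $\mathcal{E}(f,g)=(-h,g)_{L^2}$ for $g\in\mathcal{D}$ to all $g\in D(\mathcal{E})$ (which is what characterizes $f\in D(L)$) you need $\mathcal{E}_1$-density of $\mathcal{D}$ in $D(\mathcal{E})$, since the left-hand side is only $\mathcal{E}_1$-continuous in $g$---the paper states this correctly.
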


\begin{proof}
Let $f \in C^2(\overline{\Omega})$ and $g \in \mathcal{D}=C^1(\overline{\Omega})$. Then we get by the divergence theorem on $\Omega$ and (\ref{divergence}):
\begin{align*} 
&\mathcal{E}(f,g) \\
&=\frac{1}{2} \int_{\Omega} (\nabla f, \nabla g)~ \alpha d\lambda + \frac{\delta}{2} \int_{\Gamma} (\nabla_{\Gamma} f,\nabla_{\Gamma} g) ~\beta d\sigma \\
&= \frac{1}{2} \int_{\Omega} (\alpha \nabla f, \nabla g) ~d\lambda + \frac{\delta}{2} \int_{\Gamma} (\beta \nabla_{\Gamma} f, \nabla_{\Gamma} g) ~ d\sigma \\
&= - \frac{1}{2} \int_{\Omega} g~ \text{div} (\alpha \nabla f) ~d\lambda + \frac{1}{2} \int_{\Gamma} g~ (\nabla f, n) ~\alpha d\sigma - \frac{\delta}{2} \int_{\Gamma} g~ \text{div}_{\Gamma}(\beta \nabla_{\Gamma} f)~ d\sigma \\
&=- \frac{1}{2} \int_{\Omega} g~ (\Delta f \alpha + (\nabla \alpha, \nabla f)) ~d\lambda + \frac{1}{2} \int_{\Gamma} g~ \frac{\alpha}{\beta}(\nabla f, n) ~\beta d\sigma - \frac{\delta}{2} \int_{\Gamma} g (\beta \Delta_{\Gamma} f + (\nabla_{\Gamma} \beta, \nabla_{\Gamma} f))~d\sigma
\end{align*}
Note that $\nabla \ln \alpha= \frac{\nabla \alpha}{\alpha}$ on $\{ \alpha >0\}$ and $\nabla_{\Gamma} \ln \beta= \frac{\nabla_{\Gamma} \beta}{\beta}$ on $\{ \beta >0\}$. In other words, ${\nabla \alpha= \nabla \ln \alpha ~ \alpha}$ and $\nabla_{\Gamma} \beta= \nabla_{\Gamma} \ln \beta ~ \beta$. Hence, we get 
\[ \mathcal{E}(f,g)=(-Lf,g)_{L^2(\overline{\Omega};\mu)} \]
for $f \in C^2(\overline{\Omega})$ and $g \in \mathcal{D}$. By density of $\mathcal{D}$ in $D(\mathcal{E})$ with respect to the $\mathcal{E}_1$-norm, the claim follows.
\end{proof}

We can define the operator $L_{\Omega}$ and the boundary operator $L_{\Gamma}$ by 
\[ L_{\Omega} f:=\frac{1}{2} \big( \Delta f + ( \nabla \ln \alpha, \nabla f)  \big) ~\text{ and } ~L_{\Gamma} f:= \frac{1}{2} \big( \delta~ \Delta_{\Gamma} f + \delta~ ( \nabla_{\Gamma} \ln \beta, \nabla_{\Gamma} f) - \frac{\alpha}{\beta} (n, \nabla f)  \big) \]
for $f \in C^2(\overline{\Omega})$. Then the generator $L$ has the representation $L f= \mathbbm{1}_{\Omega}~ L_{\Omega} f + \mathbbm{1}_{\Gamma}~ L_{\Gamma} f$. The associated Cauchy problem for $g \in C^2(\overline{\Omega})$ has the form
\begin{align} \label{wentzellcond}
\left\{
\begin{array}{l l}
 \frac{\partial}{\partial t} u_t = \frac{1}{2} \big( \Delta u_t + ( \nabla \ln \alpha, \nabla u_t)  \big), & \text{on } \overline{\Omega}, t >0  \\
 \Delta u_t + ( \nabla \ln \alpha, \nabla u_t)  - \delta~ \Delta_{\Gamma} u_t - \delta~ ( \nabla_{\Gamma} \ln \beta, \nabla_{\Gamma} u_t) + \frac{\alpha}{\beta} (n, \nabla u_t)  =0, & \text{on } \Gamma, t >0, \\
 u_{\scriptscriptstyle{0}}=g & \text{on } \overline{\Omega}.
\end{array}
\right.
\end{align}
The condition in (\ref{wentzellcond}) is called Wentzell boundary condition. Note that if we multiply (\ref{wentzellcond}) by $\beta$ and then set $\beta$ to zero, the equation reduces to the Neumann boundary condition.\\

For $h \in C^1(\overline{\Omega})$, we have by definition and calculation $(\nabla_{\Gamma} h, \nabla_{\Gamma} f)= (P \nabla h, \nabla f)$ and \linebreak $\Delta_{\Gamma} f = \text{Tr} (P \nabla^2 f) - (n, \nabla f)~ \text{Tr} ( P \nabla n)=\text{Tr} (P \nabla^2 f) - (\kappa n , \nabla f)$. Hence, we get with 
\begin{align} \label{defA} A:= \mathbbm{1}_{\Omega} E + \delta~ \mathbbm{1}_{\Gamma} P 
\end{align}
as well as
\begin{align} \label{defb} b:= \frac{1}{2} \Big( \mathbbm{1}_{\Omega} \nabla \ln \alpha + \mathbbm{1}_{\Gamma} \big( \delta~ P \nabla \ln \beta - (\frac{\alpha}{\beta}+ \kappa) n \big) \Big) 
\end{align}
the representation
\begin{align} \label{generator}
Lf=\frac{1}{2} \text{Tr}( A \nabla^2 f) + (b,\nabla f). 
\end{align}
Note that $AA^t=A^2=A$.

\subsection{Solution to the martingale problem and SDE} \label{secSDE}

\begin{theorem}
The diffusion process $\mathbf{M}$ from Theorem \ref{thmdiff} is up to $\mu$-equivalence the unique diffusion process having $\mu$ as symmetrizing measure and solving the martingale problem for $(L,D(L))$, i.e., for all $g \in D(L)$
\[ \tilde{g}(\mathbf{X}_t) - \tilde{g}(\mathbf{X}_0) - \int_0^t (Lg)(\mathbf{X}_s) ds, \ t \geq 0, \]
is an $\mathcal{F}_t$-martingale under $\mathbf{P}_x$ for quasi all $x \in \overline{\Omega}$. Here $\tilde{g}$ denotes a quasi-continuous version of $g$ (for the definition of quasi-continuity see e.g. \cite[Chap. IV, Proposition 3.3]{MR92}).
\end{theorem}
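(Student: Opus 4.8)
The plan is to exploit the general correspondence between Dirichlet forms and solutions of martingale problems for the associated self-adjoint generator, as developed in \cite{FOT94} and \cite{MR92}. Since $(\mathcal{E},D(\mathcal{E}))$ is a regular, symmetric Dirichlet form with associated diffusion process $\mathbf{M}$ and self-adjoint generator $(L,D(L))$, the standard theory (Fukushima decomposition) gives that for $g \in D(L)$ with quasi-continuous version $\tilde{g}$, the additive functional $A_t^{[g]}:=\tilde{g}(\mathbf{X}_t)-\tilde{g}(\mathbf{X}_0)$ decomposes uniquely as $M_t^{[g]}+N_t^{[g]}$, where $M^{[g]}$ is a martingale additive functional of finite energy and $N^{[g]}$ is a continuous additive functional of zero energy. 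The key point is that when $g \in D(L)$, the zero-energy part is not merely of zero energy but is actually of bounded variation and given explicitly by $N_t^{[g]}=\int_0^t (Lg)(\mathbf{X}_s)\,ds$; this is \cite[Theorem 5.4.2]{FOT94} (or the corresponding statement for strongly local forms). Combining these, $\tilde{g}(\mathbf{X}_t)-\tilde{g}(\mathbf{X}_0)-\int_0^t(Lg)(\mathbf{X}_s)\,ds = M_t^{[g]}$ is an $\mathcal{F}_t$-martingale under $\mathbf{P}_x$ for quasi every $x \in \overline{\Omega}$, which is precisely the martingale problem assertion.

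For the uniqueness statement, I would argue as follows. Suppose $\mathbf{M}'$ is another diffusion process on $\overline{\Omega}$ with $\mu$ as symmetrizing measure that solves the martingale problem for $(L,D(L))$. From the martingale property, for $g \in D(L)$ one recovers $\frac{d}{dt}\mathbb{E}_x'[\tilde{g}(\mathbf{X}_t)]\big|_{t=0}=Lg(x)$ in the appropriate (weak/$L^2$) sense, so the semigroup $(p_t')$ of $\mathbf{M}'$ has generator extending $(L,D(L))$ on $L^2(\overline{\Omega};\mu)$. Since $(L,D(L))$ is self-adjoint and hence maximal dissipative, any $L^2$-generator extending it must coincide with it; therefore $(p_t')=(T_t)=(p_t)$ as semigroups on $L^2(\overline{\Omega};\mu)$, and the associated $\mu$-symmetric Hunt processes agree up to $\mu$-equivalence by the uniqueness part of Theorem \ref{thmdiff}. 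One must be slightly careful that a bare martingale problem solution need not a priori be $\mu$-symmetric, which is why the symmetrizing-measure hypothesis is included in the statement; with it, the transition semigroup is automatically symmetric on $L^2(\overline{\Omega};\mu)$ and the above argument closes.

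The main obstacle I anticipate is the passage from "zero energy" to "bounded variation with density $Lg$" — i.e., justifying that for $g \in D(L)$ (not merely $g \in D(\mathcal{E})$) the zero-energy part $N^{[g]}$ is in fact $\int_0^t Lg(\mathbf{X}_s)\,ds$. This requires $Lg \in L^2(\overline{\Omega};\mu) \subset L^1_{loc}$ and an appeal to the characterization of continuous additive functionals of zero energy that are of bounded variation via their Revuz measures; the Revuz measure of $t\mapsto\int_0^t Lg(\mathbf{X}_s)ds$ is $Lg\,d\mu$, and by the defining relation $\mathcal{E}(g,h)=(-Lg,h)_{L^2(\mu)}$ for all $h \in D(\mathcal{E})$ this matches the Revuz measure associated to $N^{[g]}$, whence they coincide by the uniqueness in the Revuz correspondence. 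All of this is standard and available from the cited textbooks, so after invoking \cite[Theorem 5.4.2]{FOT94} (together with \cite[Theorem 5.1.3]{FOT94} for the Fukushima decomposition) the argument is essentially a citation; the remaining work is merely to check the hypotheses of those theorems are met, which follows from the regularity and strong locality of $(\mathcal{E},D(\mathcal{E}))$ already established.
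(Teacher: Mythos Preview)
Your argument is correct. The paper itself does not give a proof at all: it simply cites \cite[Theorem 3.4 (i)]{AR95}, so there is no ``same approach'' to compare against. What you have written is essentially a self-contained reconstruction of that cited result, via the Fukushima decomposition together with the identification of the zero-energy part as $\int_0^t Lg(\mathbf{X}_s)\,ds$ for $g\in D(L)$; this is indeed the content of the relevant parts of \cite{FOT94} (the precise theorem numbers vary between editions, but the argument is standard). Your uniqueness argument via maximal dissipativity of the self-adjoint generator is also the usual one and matches what underlies the Albeverio--R\"ockner result. A minor remark: for the martingale statement alone one can bypass the full Fukushima machinery and argue directly from the semigroup identity $T_t g - g = \int_0^t T_s Lg\,ds$ together with the Markov property, which is slightly lighter; but your route is equally valid and has the advantage of making explicit where quasi-continuity and the exceptional set enter.
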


\begin{proof}
See e.g. \cite[Theorem 3.4 (i)]{AR95}.
\end{proof}

By the explicit calculation of $L$ given in Proposition \ref{propgen} and the notation in (\ref{generator}), we obtain the following corollary:

\begin{corollary}
Assume that Condition \ref{condcont} is fulfilled. Let $g \in C^2(\overline{\Omega})$ and let $\mathbf{M}$ be the diffusion process from Theorem \ref{thmdiff}. Then 
\[ g(\mathbf{X}_t) - g(\mathbf{X}_0) - \int_0^t \frac{1}{2} \text{Tr} \big( A(\mathbf{X}_s) \nabla^2 g(\mathbf{X}_s) \big) + \big( b(\mathbf{X}_s), \nabla g(\mathbf{X}_s) \big) ds, \ t \geq 0, \]
is an $\mathcal{F}_t$-martingale under $\mathbf{P}_x$ for quasi every $x \in \overline{\Omega}$, where $A$ and $b$ are defined as in (\ref{defA}) and (\ref{defb}).
\end{corollary}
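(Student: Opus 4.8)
The plan is to read this corollary off directly from the martingale-problem theorem stated above, the only substantive ingredient being the inclusion $C^2(\overline{\Omega}) \subseteq D(L)$, which is exactly what makes the statement a \emph{corollary}. Thus I would first record that inclusion together with the explicit form of $L$ on $C^2(\overline{\Omega})$, then apply the martingale-problem theorem to $g$, and finally rewrite the compensator using the representation (\ref{generator}).

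For the inclusion I would invoke Proposition \ref{propgen}. Its proof integrates by parts — via the classical divergence theorem on $\Omega$ and the surface divergence theorem (\ref{divergence}) on $\Gamma$ — to obtain, for every $g \in C^2(\overline{\Omega})$, the identity $\mathcal{E}(g,h) = (-Lg,h)_{L^2(\overline{\Omega};\mu)}$ first for all $h \in \mathcal{D} = C^1(\overline{\Omega})$ and then, by $\mathcal{E}_1$-density of $\mathcal{D}$ in $D(\mathcal{E})$, for all $h \in D(\mathcal{E})$, where $Lg$ is the explicit expression displayed in Proposition \ref{propgen}. By the defining property of the unique positive self-adjoint operator associated with the Dirichlet form $(\mathcal{E},D(\mathcal{E}))$, this is precisely the assertion that $g \in D(L)$ with $Lg$ given by that formula.

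I would then feed $g$ into the martingale-problem theorem. Since $g \in C^2(\overline{\Omega})$ is continuous on $\overline{\Omega}$, it is quasi-continuous, so $\tilde{g} = g$; hence the theorem gives directly that $g(\mathbf{X}_t) - g(\mathbf{X}_0) - \int_0^t (Lg)(\mathbf{X}_s)\,ds$, $t \geq 0$, is an $\mathcal{F}_t$-martingale under $\mathbf{P}_x$ for quasi every $x \in \overline{\Omega}$. Finally I would substitute the representation (\ref{generator}), namely $Lg = \tfrac12\,\text{Tr}(A\nabla^2 g) + (b,\nabla g)$ with $A$ and $b$ as in (\ref{defA}) and (\ref{defb}); this last step is purely algebraic — the reorganisation of the formula for $L$ already carried out before (\ref{generator}) using $\nabla_{\Gamma} h = P\nabla h$ and $\Delta_{\Gamma} g = \text{Tr}(P\nabla^2 g) - (\kappa n,\nabla g)$ — and yields the claimed compensator, completing the proof.

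The one point that is not entirely formal — and it is already built into Proposition \ref{propgen} — is to verify that the expression for $Lg$ genuinely defines an element of $L^2(\overline{\Omega};\mu)$, so that the step $g \in D(L)$ is legitimate and not merely a distributional identity. For the interior part this follows from Condition \ref{condcont}: one has $|\nabla\ln\alpha|^2\,\alpha = |\nabla\alpha|^2/\alpha = 4\,|\nabla\sqrt{\alpha}|^2 \in L^1(\Omega;\lambda)$, while $\Delta g$ and $\nabla g$ are bounded on the compact set $\overline{\Omega}$; the Laplace--Beltrami and $\nabla_{\Gamma}\ln\beta$ contributions on $\Gamma$ are handled in the same way from $\sqrt{\beta} \in H^{1,2}(\Gamma)$ when $\delta = 1$. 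I expect the boundary drift term $\tfrac{\alpha}{\beta}(n,\nabla g)$ to be the one requiring most care, and it is there that the structural hypotheses on $\alpha$ and $\beta$ in Condition \ref{condcont} (continuity, together with the Hamza-type Condition \ref{condhamza} it implies) have to be exploited; everything else is a direct appeal to the martingale-problem theorem and to Proposition \ref{propgen}.
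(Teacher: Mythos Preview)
Your proposal is correct and follows exactly the route the paper takes: the corollary is stated without explicit proof, merely as an immediate consequence of the martingale-problem theorem combined with Proposition \ref{propgen} and the rewriting (\ref{generator}), and you spell out precisely these steps (including the observation that $g$ is already quasi-continuous). Your closing remarks on verifying $Lg \in L^2(\overline{\Omega};\mu)$ --- in particular the care needed for the $\tfrac{\alpha}{\beta}$ term --- go somewhat beyond what the paper makes explicit in Proposition \ref{propgen}, but this is supplementary detail rather than a different argument.
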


\begin{lemma}[weak solutions and martingale problems] \label{lemSV}
Fix the probability measure $\mathbf{P}=\mathbf{P}_x$, $x \in \overline{\Omega}$, on $C(\mathbb{R}_{+},\mathbb{R}^d)$ (see also Remark \ref{remcanonical}). Let $A$, $b$ be given on $\overline{\Omega}$ by (\ref{defA}) and (\ref{defb}) respectively. If 
\[ f(\mathbf{X}_t) - f(\mathbf{X}_0) - \int_0^t \frac{1}{2} \text{Tr} \big( A(\mathbf{X}_s) \nabla^2 f (\mathbf{X_s}) \big) + (b(\mathbf{X}_s), \nabla f(\mathbf{X}_s)) \ ds \]
is an $\mathcal{F}_t$-martingale under $\mathbf{P}$ for every $f \in C^{\infty}_c(\mathbb{R}^d)$, the equation
\[ d\mathbf{X}_t = A(\mathbf{X}_t) dB_t + b(\mathbf{X}_t) dt \]
has a weak solution with distribution $\mathbf{P}$, where $(B_t)_{t \geq 0}$ is a $d$-dimensional standard Brownian motion.
\end{lemma}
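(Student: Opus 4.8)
The plan is to recognize Lemma~\ref{lemSV} as a special case of the standard equivalence between the martingale problem of Stroock--Varadhan type and weak existence for stochastic differential equations; the only point requiring care is that here the diffusion coefficient is $A$ itself rather than a square root $\sigma$ with $\sigma\sigma^t = A$. So the first thing I would do is verify that $A$ \emph{is} its own square root: indeed $A = \mathbbm{1}_\Omega E + \delta\,\mathbbm{1}_\Gamma P$ is symmetric, and since $E^2 = E$, $P^2 = P$ (as $P$ is an orthogonal projection), and $\Omega$, $\Gamma$ are disjoint, we get $A A^t = A^2 = A$, which is exactly the remark ``$AA^t = A^2 = A$'' recorded after~(\ref{generator}). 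Hence the coefficient matrix of the SDE $d\mathbf{X}_t = A(\mathbf{X}_t)\,dB_t + b(\mathbf{X}_t)\,dt$ has covariation $\sqrt{A}\sqrt{A}^t = A$ pointwise along any path staying in $\overline{\Omega}$ (using $A^{1/2}=A$), which is precisely the second-order coefficient appearing in the martingale problem.

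Next I would invoke the classical result (e.g.\ Stroock--Varadhan, or \cite[Chap.~IV, Theorem 2.3 and Proposition 2.1]{IW89}, or Karatzas--Shreve Prop.~5.4.6): if a probability measure $\mathbf{P}$ on $C(\mathbb{R}_+,\mathbb{R}^d)$ is such that, for every $f \in C^\infty_c(\mathbb{R}^d)$, the process
\[ f(\mathbf{X}_t) - f(\mathbf{X}_0) - \int_0^t \Big( \tfrac{1}{2}\mathrm{Tr}\big(a(\mathbf{X}_s)\nabla^2 f(\mathbf{X}_s)\big) + (b(\mathbf{X}_s),\nabla f(\mathbf{X}_s)) \Big)\, ds \]
is an $\mathcal{F}_t$-martingale, with $a = A$ bounded and measurable, then there exists (possibly on an enlarged probability space) a $d$-dimensional Brownian motion $(B_t)_{t\ge 0}$ such that the coordinate process solves $d\mathbf{X}_t = A(\mathbf{X}_t)\,dB_t + b(\mathbf{X}_t)\,dt$ with distribution $\mathbf{P}$. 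The construction of $B$ is the usual one: test with the coordinate functions $f(x)=x_i$ (which agree locally with $C^\infty_c$ functions and can be reached by localization since $\mathbf{X}$ stays in the bounded set $\overline{\Omega}$) to extract the continuous local martingale part $M^i_t = \mathbf{X}^i_t - \mathbf{X}^i_0 - \int_0^t b_i(\mathbf{X}_s)\,ds$; testing with $x_i x_j$ identifies $\langle M^i, M^j\rangle_t = \int_0^t A_{ij}(\mathbf{X}_s)\,ds$; and then a representation theorem for continuous local martingales with absolutely continuous quadratic variation (using $A = A\cdot A^t$ so no separate square root is needed) produces the driving Brownian motion $B$ with $M_t = \int_0^t A(\mathbf{X}_s)\,dB_t$.

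One technical wrinkle to address is that the martingale-problem hypothesis is stated for all $f\in C^\infty_c(\mathbb{R}^d)$ while the standard extraction of $M$ and its bracket uses the (non-compactly-supported) functions $x_i$ and $x_ix_j$; since $\mathbf{X}$ almost surely takes values in the compact set $\overline{\Omega}$, this is handled by choosing $f\in C^\infty_c(\mathbb{R}^d)$ that coincides with $x_i$ (resp.\ $x_ix_j$) on a neighbourhood of $\overline{\Omega}$, so no localizing stopping times are even needed. The genuinely substantive step — and the only place one must be slightly careful — is the martingale representation producing $B$ when the coefficient is the non-negative matrix $A$ itself; this is exactly why the identity $A A^t = A$ was recorded, and with it in hand the representation is immediate. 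Since the hard analytic content of the paper is entirely upstream (closability, regularity, and the explicit form of $L$), the proof of Lemma~\ref{lemSV} is essentially a citation: I would simply write ``This is a consequence of the standard equivalence between martingale problems and weak solutions, see e.g.\ \cite[Chap.~IV, Sect.~2]{IW89}; note that $A A^t = A$ so that $A$ serves as its own dispersion coefficient, and that $\mathbf{X}$ stays in the bounded set $\overline{\Omega}$ so that the hypothesis for $C^\infty_c(\mathbb{R}^d)$-functions suffices.''
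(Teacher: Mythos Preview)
Your proposal is correct and essentially matches the paper's own proof, which is just a citation to \cite[Theorem 18.7]{Kal97} together with the observation that $A$ and $b$ are progressive and that the identity $AA^t=A$ is used. You cite a different standard reference and spell out more of the underlying mechanics (the extraction of $M$, its bracket, and the representation of $B$), but the approach is the same.
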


\begin{proof}
See e.g. \cite[Theorem 18.7]{Kal97}. Note that $A$ and $b$ (defined as in (\ref{defA}) and (\ref{defb})) fulfill the required conditions, since they are progressive. Furthermore, the property $AA^t=A$ is used.
\end{proof}

\begin{remark}
The solution to the SDE given in Lemma \ref{lemSV} results from $\mathbf{M}$ by extending the underlying filtration $(\mathcal{F}_t)_{t \geq 0}$ if necessary (see proof of  \cite[Theorem 18.7]{Kal97} and the references therein). For convenience, we use for the process equipped with the enlarged filtration again the notation
\[ \mathbf{M}=\big( \mathbf{\Omega}, \mathcal{F}, (\mathcal{F}_t)_{t \geq 0}, (\mathbf{X}_t)_{t \geq 0}, (\Theta_t)_{t \geq 0}, (\mathbf{P}_x)_{x \in \overline{\Omega}} \big)  \]
taking into account that the associated Dirichlet form is still given by $(\mathcal{E},D(\mathcal{E}))$.
\end{remark}


\begin{theorem} \label{thmsolSDE}
$\mathbf{M}$ is a solution to the SDE
\begin{align*}
d\mathbf{X}_t =& \mathbbm{1}_{\Omega} (\mathbf{X}_t) \Big( dB_t + \frac{1}{2} \nabla \ln \alpha (\mathbf{X}_t) dt \Big) - \mathbbm{1}_{\Gamma}(\mathbf{X}_t) \frac{\alpha}{\beta}(\mathbf{X}_t) ~n(\mathbf{X_t}) dt   \\ 
&+ \delta ~ \mathbbm{1}_{\Gamma}(\mathbf{X}_t) \Big( dB_t^{\Gamma} + \frac{1}{2} \nabla_{\Gamma} \ln \beta (\mathbf{X}_t) dt \Big),  \\
dB_t^{\Gamma} =& P(\mathbf{X}_t) \circ dB_t,  \\
\mathbf{X}_0 =& x, 
\end{align*}
for quasi every starting point $x \in \overline{\Omega}$, where $(B_t)_{t \geq 0}$ is a $d$-dimensional standard Brownian motion, i.e.,
\begin{align}
\mathbf{X}_t = x &+ \int_0^t \mathbbm{1}_{\Omega}(\mathbf{X}_s) dB_s + \int_0^t \mathbbm{1}_{\Omega}(\mathbf{X}_s) \frac{1}{2} \nabla \ln \alpha(\mathbf{X}_s) ds \notag \\
&+ \delta \int_0^t \mathbbm{1}_{\Gamma}(\mathbf{X}_s) P(\mathbf{X}_s) dB_s - \delta \int_0^t  \mathbbm{1}_{\Gamma}(\mathbf{X}_s) \frac{1}{2} \kappa(\mathbf{X}_s) n(\mathbf{X}_s) ds \label{qesolution} \\
&+ \delta \int_0^t  \mathbbm{1}_{\Gamma}(\mathbf{X}_s) \frac{1}{2} \nabla_{\Gamma} \ln \beta (\mathbf{X}_s) ds - \int_0^t \frac{1}{2} \frac{\alpha}{\beta}(\mathbf{X}_s)  \mathbbm{1}_{\Gamma}(\mathbf{X}_s)n(\mathbf{X}_s) ds \notag
\end{align}
almost surely under $\mathbf{P}_x$ for quasi every $x \in \overline{\Omega}$.
\end{theorem}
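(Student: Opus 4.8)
The plan is to deduce the SDE representation \eqref{qesolution} from the martingale problem established in the preceding corollary, via the Itô–Stratonovich transformation rule and Lemma~\ref{lemSV}. First I would reduce from test functions in $C^2(\overline{\Omega})$ to test functions in $C^\infty_c(\mathbb{R}^d)$: since $\mathbf{M}$ lives in $\overline{\Omega}$ and only the values of $A$ and $b$ on $\overline{\Omega}$ enter the martingale, one extends $A$ and $b$ (using that $\Gamma$ is $C^2$, so $n$, $P$ and $\kappa$ extend smoothly near $\Gamma$, and $\alpha,\beta$ are continuous) to progressively measurable, bounded coefficients on $\mathbb{R}^d$, or alternatively argues directly with $f\in C^\infty_c(\mathbb{R}^d)$ restricted to $\overline{\Omega}$ and uses that the process never leaves $\overline{\Omega}$ by conservativeness from Theorem~\ref{thmdiff}. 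Then the Corollary shows the martingale property of the required form holds under $\mathbf{P}_x$ for quasi every $x$, so Lemma~\ref{lemSV} applies and yields a $d$-dimensional Brownian motion $(B_t)_{t\ge 0}$ (on a possibly enlarged filtration, cf. the remark after Lemma~\ref{lemSV}) such that
\[ d\mathbf{X}_t = A(\mathbf{X}_t)\,dB_t + b(\mathbf{X}_t)\,dt \]
for quasi every starting point, with $A$, $b$ given by \eqref{defA}, \eqref{defb}.

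Next I would rewrite this Itô SDE in the form asserted in the theorem. Splitting according to the indicators $\mathbbm{1}_\Omega$ and $\mathbbm{1}_\Gamma$: on $\{\mathbf{X}_t\in\Omega\}$ we have $A=E$ and $b=\tfrac12\nabla\ln\alpha$, giving the first line; on $\{\mathbf{X}_t\in\Gamma\}$ we have $A=\delta P$ and $b=\tfrac12(\delta P\nabla\ln\beta-(\tfrac{\alpha}{\beta}+\kappa)n)$. The term $-\tfrac12\tfrac{\alpha}{\beta}\mathbbm{1}_\Gamma\, n\,dt$ and the drift $\tfrac{\delta}{2}\mathbbm{1}_\Gamma\nabla_\Gamma\ln\beta\,dt$ appear directly (using $P\nabla\ln\beta=\nabla_\Gamma\ln\beta$ on $\Gamma$), while the $-\tfrac{\delta}{2}\mathbbm{1}_\Gamma\kappa n\,dt$ piece of $b$ is precisely the Itô–Stratonovich correction: by Lemma~\ref{lemcurv}, $(P\nabla)^t P = -\kappa n$, so the It{\^o}–Stratonovich transformation rule (stated in the excerpt) converts $\delta\,\mathbbm{1}_\Gamma P(\mathbf{X}_t)\,dB_t - \tfrac{\delta}{2}\mathbbm{1}_\Gamma\kappa(\mathbf{X}_t)n(\mathbf{X}_t)\,dt$ into the Stratonovich integral $\delta\,\mathbbm{1}_\Gamma P(\mathbf{X}_t)\circ dB_t = \delta\,\mathbbm{1}_\Gamma\,dB_t^\Gamma$. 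Assembling the four drift integrals and the two stochastic integrals gives exactly \eqref{qesolution}, and hence the compact Stratonovich form in the statement.

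The main obstacle is a point of rigor in the middle step: the indicator coefficients $\mathbbm{1}_\Omega A$, $\mathbbm{1}_\Gamma A$ are discontinuous across $\Gamma$, so the Itô–Stratonovich rule as stated (for $C^1$ coefficients) does not apply verbatim to the combined coefficient $A=\mathbbm{1}_\Omega E+\delta\mathbbm{1}_\Gamma P$. I would handle this by localizing: on the (progressively measurable) set of times $\{s:\mathbf{X}_s\in\Gamma\}$ the process, while it sits on $\Gamma$, is driven only by the $\delta P\circ dB$ part with the smooth extension of $P$ near $\Gamma$, so the conversion is legitimate there; on $\{s:\mathbf{X}_s\in\Omega\}$ there is no Stratonovich correction since $A\equiv E$ is constant. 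One should also note that the quadratic-variation/correction term only contributes on $\Gamma$ because the occupation time formula together with $\lambda(\Gamma)=0$ controls the interior contribution, and that $P$ is genuinely $C^1$ in a neighbourhood of $\Gamma$ by the remark following Definition~\ref{defnormal}. Finally, I would remark that all statements hold simultaneously for quasi every $x$ because the exceptional set from the martingale problem is the same for a countable determining family of test functions, and $A,b$ are fixed.
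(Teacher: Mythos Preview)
Your proposal is correct and follows exactly the route the paper intends: the Corollary gives the martingale problem with $A,b$ from (\ref{defA})--(\ref{defb}), Lemma~\ref{lemSV} converts this into the It\^o SDE $d\mathbf{X}_t=A(\mathbf{X}_t)\,dB_t+b(\mathbf{X}_t)\,dt$, and splitting $A,b$ according to $\mathbbm{1}_\Omega,\mathbbm{1}_\Gamma$ yields (\ref{qesolution}) directly; the Stratonovich form with $dB_t^\Gamma=P(\mathbf{X}_t)\circ dB_t$ is then read off via Lemma~\ref{lemcurv} and the It\^o--Stratonovich rule. The paper in fact gives no separate proof of the theorem --- it is stated as the immediate consequence of the preceding Corollary, Lemma~\ref{lemSV}, and the accompanying Remark.

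One small comment on your ``main obstacle'': you are over-worrying here. In the paper's formulation the It\^o integral equation (\ref{qesolution}) \emph{is} the meaning of the SDE (note the ``i.e.''), and it drops out of Lemma~\ref{lemSV} with no Stratonovich conversion needed at all. The Stratonovich differential $P(\mathbf{X}_t)\circ dB_t$ appears only as a \emph{label} for the combination $P\,dB_t-\tfrac12\kappa n\,dt$ on $\Gamma$, justified by applying the transformation rule to the smooth matrix field $P$ in a neighborhood of $\Gamma$ (cf.\ the remark after Definition~\ref{defnormal}); the indicator $\mathbbm{1}_\Gamma$ sits outside this identification and never enters the correction term. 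So no localization argument across the discontinuity is actually required.
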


\begin{remark}
A Fukushima decomposition of $\mathbf{M}$ (see \cite[Chap. 5]{FOT94}) yields the same result as in Theorem \ref{thmsolSDE}. We would like to mention that the argument used here in order to get a solution to the SDE (\ref{SDE}) does not work for reflecting (Neumann) boundary conditions, since in this case the reflection is not given by a drift term. However, a Fukushima decomposition is still valid (see e.g. \cite{Tru03}), because in this case it is also possible to assign an additive functional to the surface measure $\sigma$. The advantage in our situation is that we are able to express the boundary behavior in terms of the generator.
\end{remark}

\begin{remark}
We define the boundary local time $(l_t)_{t \geq 0}$ of $(\mathbf{X}_t)_{t\geq 0}$ as the additive functional corresponding to the measure $\sigma$ on $\Gamma$ (in the sense of Revuz correspondence). Then, $(l_t)_{t\geq 0}$ is given by
\[ l_t=\int_0^t \frac{1}{\beta}(\mathbf{X}_s)~ \mathbbm{1}_{\Gamma}(\mathbf{X}_s)~n(\mathbf{X}_s) ds \]
and in view of (\ref{qesolution}) $(\mathbf{X}_t)_{t \geq 0}$ has the representation
\begin{align*}
\mathbf{X}_t = x &+ \int_0^t \mathbbm{1}_{\Omega}(\mathbf{X}_s) dB_s + \int_0^t \mathbbm{1}_{\Omega}(\mathbf{X}_s) \frac{1}{2} \nabla \ln \alpha(\mathbf{X}_s) ds  \\
&+ \delta \int_0^t \mathbbm{1}_{\Gamma}(\mathbf{X}_s) P(\mathbf{X}_s) dB_s - \delta \int_0^t  \mathbbm{1}_{\Gamma}(\mathbf{X}_s) \frac{1}{2} \kappa(\mathbf{X}_s) n(\mathbf{X}_s) ds  \\
&+ \delta \int_0^t  \mathbbm{1}_{\Gamma}(\mathbf{X}_s) \frac{1}{2} \nabla_{\Gamma} \ln \beta (\mathbf{X}_s) ds - \int_0^t \frac{1}{2} \alpha(\mathbf{X}_s) dl_s
\end{align*}
almost surely under $\mathbf{P}_x$ for quasi every $x \in \overline{\Omega}$. \\
Consider the Dirichlet form given by the closure of
\begin{align}
\frac{1}{2} \int_{\Omega} (\nabla f, \nabla g)~ \alpha d\lambda + \frac{\delta}{2} \int_{\Gamma} (\nabla_{\Gamma} f,\nabla_{\Gamma} g) ~\beta d\sigma \ \text{ for } f,g \in \mathcal{D}:=C^1(\overline{\Omega}), \label{reflDF} \end{align}
on $L^2(\Omega;\alpha \lambda)$, $\delta \in \{0,1\}$. For $\delta=0$ this construction yields reflecting distorted Brownian motion on $\overline{\Omega}$ and for $\delta=1$ the setting corresponds to a generalization of the one considered in \cite{Car09}. More precisely, the associated process is a solution to
\begin{align}
\mathbf{X}_t = x &+ B_t + \int_0^t \frac{1}{2} \nabla \ln \alpha(\mathbf{X}_s) ds  \notag \\
&+ \delta \int_0^t \beta(\mathbf{X}_s)~ P(\mathbf{X}_s) dB_{L_s} - \delta \int_0^t \beta(\mathbf{X}_s)~ \frac{1}{2} \kappa(\mathbf{X}_s) n(\mathbf{X}_s) dL_s \label{reflSDE} \\
&+ \delta \int_0^t \frac{1}{2} \nabla_{\Gamma} \beta (\mathbf{X}_s) dL_s - \int_0^t \frac{1}{2} \alpha(\mathbf{X}_s) n(\mathbf{X}_s) dL_s, \notag
\end{align}
where $(L_t)_{t \geq 0}$ is the local time of the diffusion associated to the closure of (\ref{reflDF}) on $L^2(\Omega;\alpha \lambda)$ (in the sense of the additive functional associated to $\sigma$). 
Using the connection of random time changes and Dirichlet forms presented in \cite[Chapter 5]{CF11} and \cite[Chapter 6]{FOT94}, it follows that the change of the reference measure from $\alpha \lambda$ to $\alpha \lambda+ \beta \sigma$ corresponds to a random time change of the associated process via the inverse $\tau(t)$ of the additive functional given by
\[ A_t:=t + \beta(\mathbf{X}_t) L_t, \quad t \geq 0. \]
In other words, the sticky reflecting diffusion (\ref{qesolution}) results from a diffusion with ordinary reflection at $\Gamma$ by introducing a new time scale such that time slows down if (and only if) the process is located on the boundary of $\Omega$. Note that it is reasonable that the new time scale $\tau(t)$ converts a solution of (\ref{reflSDE}) into a solution of (\ref{qesolution}). This connection has already been observed in the case of sticky reflected distorted Brownian motion on $[0, \infty)^n$, $n\in \mathbb{N}$, in \cite{GV14}.
\end{remark}

\subsection{Ergodicity and occupation time} 

Throughout this section we assume that Condition \ref{conddensity} and Condition \ref{condhamza} are fulfilled and denote by $\mathbf{M}$ the process constructed in Theorem \ref{thmdiff}. Given the process $\mathbf{M}$, we can define via its transition semigroup $(p_t)_{t \geq 0}$ a Dirichlet form and by construction of $\mathbf{M}$ this form is $(\mathcal{E},D(\mathcal{E}))$ again. Recall that the sub-Markovian strongly continuous contraction semigroup of $(\mathcal{E},D(\mathcal{E}))$ is denoted by $(T_t)_{t \geq 0}$. We use the results provided in \cite[Chap. 4.7]{FOT94} in order to prove an ergodic theorem for $\mathbf{M}$. To do this, we restrict to invariant subsets of $\overline{\Omega}$ and show the part of the form $(\mathcal{E},D(\mathcal{E}))$ on the invariant set is irreducible recurrent. This allows to determine the occupation time of the process on $\Gamma$ and, as a consequence, to show that the boundary behavior is indeed sticky. The main result of this section is Theorem \ref{thmergo}. In order to avoid confusion, we label the capacity of a set by the underlying Dirichlet form. For the sake of convenience, we state all proofs for the case $\delta=1$, which can easily be modified to hold for $\delta=0$. \\

First, we define the notion of parts of Dirichlet forms:

\begin{definition}[\emph{part of a Dirichlet form}] \label{defpart} Let $(\mathcal{G}, D(\mathcal{G}))$ be an arbitrary regular Dirichlet form on some locally compact, separable metric space $X$, $m$ a positive Radon measure on $X$ with full topological support and $G$ an open subset of $X$. Then we define by $\mathcal{G}^G(f,g):=\mathcal{G}(f,g)$ for $f,g \in D(\mathcal{G}^G):= \{ f \in D(\mathcal{G}) | \ \tilde{f}=0 \ \mathcal{G}\text{-q.e. on } X \backslash G \}$ the \emph{part of the form $(\mathcal{G},D(\mathcal{G}))$ on $G$}, where $\tilde{f}$ denotes an $\mathcal{G}$-quasi-continuous version of $f$. Indeed, this defines a regular Dirichlet form on $L^2(G; m)$ (see \cite[Theorem 4.4.3]{FOT94}).
\end{definition}

Throughout this section, suppose that Condition \ref{condcont} is satisfied and denote by 
\begin{align*}
\mathbf{M}:=\big( \mathbf{\Omega}, \mathcal{F}, (\mathcal{F}_t)_{t \geq 0}, (\mathbf{X}_t)_{t \geq 0}, (\Theta_t)_{t \geq 0}, (\mathbf{P}_x)_{x \in \overline{\Omega}} \big)
\end{align*}
the process constructed in Section \ref{secSDE}. Furthermore, for an open subset $G$ of $\overline{\Omega}$
\begin{align*} 
\mathbf{M}^G:=\big( \mathbf{\Omega}, \mathcal{F}, (\mathcal{F}_t)_{t \geq 0}, (\mathbf{X}^0_t)_{t \geq 0}, (\Theta_t)_{t \geq 0}, (\mathbf{P}_x)_{x \in G_{\Delta}} \big)
\end{align*}
is called the part of the process $\mathbf{M}$ on $G$, where $\mathbf{X}^0_t(\omega)$ results from $\mathbf{X}_t(\omega)$ by killing the path upon leaving $G$ for $\omega \in \mathbf{\Omega}$. By \cite[Theorem 4.4.2]{FOT94} the process $\mathbf{M}^G$ is associated to $(\mathcal{E}^G,D(\mathcal{E}^G))$. \\

Let $\mathcal{C}$ be the set of all connected components of $\overline{\Omega}_1:=\overline{\Omega} \backslash \{\varrho=0\} = \{ \varrho >0\}$ and for $G \in \mathcal{C}$ let $G_{\Gamma}:= G \cap \Gamma$.

\begin{condition} \label{condcomp} $\text{cap}_{\mathcal{E}} (\{ \varrho=0\})=0$ and $\alpha, \beta \in C(\overline{\Omega})$.
\end{condition}

\begin{lemma} Assume that Condition \ref{condcomp} is fulfillded. Then
\begin{enumerate}
\item $\text{cap}_{\mathcal{E}_{\Omega}} (\{ \varrho =0\})=0$ and $\text{cap}_{\mathcal{E}_{\Gamma}} (\{ \varrho =0\} \cap \Gamma)=0.$
\item \label{invcomp} Each $G \in \mathcal{C}$ is open in $\overline{\Omega}$ and quasi closed with respect to $\mathcal{E}$. In particular, G is $T_t$-invariant.
\item The assertion in \ref{invcomp} holds accordingly for $G$ and $G_{\Gamma}$ with respect to $\mathcal{E}_{\Omega}$ and $\mathcal{E}_{\Gamma}$ respectively.
\end{enumerate}
\end{lemma}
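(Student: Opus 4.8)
For part~(i) the plan is a capacity comparison based on form domination. On the common core $\mathcal{D}=C^1(\overline{\Omega})$ one has $\mathcal{E}_{\Omega,1}(f,f)\le\mathcal{E}_1(f,f)$ and $\mathcal{E}_{\Gamma,1}(e(f),e(f))\le\mathcal{E}_1(f,f)$, where the subscript $1$ adds the respective $L^2$-inner product ($L^2(\overline{\Omega};\alpha\lambda)$, $L^2(\Gamma;\beta\sigma)$), $e$ denotes restriction to $\Gamma$, and the inequalities hold because $\mu=\alpha\lambda+\beta\sigma$ dominates both $\alpha\lambda$ and $\beta\sigma$. Passing to the closures, every $u\in D(\mathcal{E})$ lies in $D(\mathcal{E}_\Omega)$ and $e(u)\in D(\mathcal{E}_\Gamma)$ with the same inequalities, since an $\mathcal{E}_1$-Cauchy sequence in $C^1(\overline{\Omega})$ converging to $u$ in $L^2(\overline{\Omega};\mu)$ is also $\mathcal{E}_{\Omega,1}$- resp.\ $\mathcal{E}_{\Gamma,1}$-Cauchy and still converges in $L^2(\overline{\Omega};\alpha\lambda)$ resp.\ $L^2(\Gamma;\beta\sigma)$. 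Hence, for every open $O\subseteq\overline{\Omega}$ and every $u\in D(\mathcal{E})$ with $u\ge1$ $\mu$-a.e.\ on $O$, we get $u\ge1$ $\alpha\lambda$-a.e.\ on $O$ and $e(u)\ge1$ $\beta\sigma$-a.e.\ on $O\cap\Gamma$, whence $\text{cap}_{\mathcal{E}_\Omega}(O)\le\mathcal{E}_{\Omega,1}(u,u)\le\mathcal{E}_1(u,u)$ and $\text{cap}_{\mathcal{E}_\Gamma}(O\cap\Gamma)\le\mathcal{E}_1(u,u)$ ($O\cap\Gamma$ being open in $\Gamma$). Taking the infimum over such $u$ gives $\text{cap}_{\mathcal{E}_\Omega}(O)\le\text{cap}_{\mathcal{E}}(O)$ and $\text{cap}_{\mathcal{E}_\Gamma}(O\cap\Gamma)\le\text{cap}_{\mathcal{E}}(O)$, and then taking the infimum over open $O\supseteq\{\varrho=0\}$ (note $O\cap\Gamma\supseteq\{\varrho=0\}\cap\Gamma$) and using $\text{cap}_{\mathcal{E}}(\{\varrho=0\})=0$ from Condition~\ref{condcomp} yields both assertions.

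For part~(ii) I would first settle the topology. Write $\overline{\Omega}_1=(\{\alpha>0\}\cap\Omega)\cup(\{\beta>0\}\cap\Gamma)$ and verify that $\overline{\Omega}_1$ is locally connected: a point of $\Omega$ has a neighbourhood base of Euclidean balls inside the open set $\{\alpha>0\}\cap\Omega$; at a point $y_0\in\{\beta>0\}\cap\Gamma$, using that $\Gamma$ is $C^2$ (Condition~\ref{condcont}) and $\beta$ is continuous, a small ball $B$ about $y_0$ meets $\overline{\Omega}_1$ in $(B\cap\{\alpha>0\}\cap\Omega)\cup(B\cap\{\beta>0\}\cap\Gamma)$, and since $B\cap\Gamma\subseteq\overline{B\cap\Omega}$ the boundary piece glues the interior pieces it meets into a connected neighbourhood of $y_0$. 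Thus every $G\in\mathcal{C}$ is open in $\overline{\Omega}_1$ and, being a connected component, also closed there, i.e.\ clopen in $\overline{\Omega}_1$; since $\overline{\Omega}_1$ is second countable, $\mathcal{C}$ is at most countable. In particular $G$ is relatively open, and $\overline{\Omega}_1$ coincides with $\overline{\Omega}$ outside the $\mathcal{E}$-polar set $\{\varrho=0\}$.

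Next, by Condition~\ref{condcomp} choose a nonincreasing sequence of open sets $O_n\supseteq\{\varrho=0\}$ with $\text{cap}_{\mathcal{E}}(O_n)<1/n$; then $F_n:=\overline{\Omega}\setminus O_n$ is a nondecreasing sequence of closed subsets of $\overline{\Omega}_1$ forming an $\mathcal{E}$-nest. For each $n$, the sets $G\cap F_n$ and $F_n\setminus G$ are relatively clopen in $F_n$ (because $F_n\subseteq\overline{\Omega}_1$ and $G$ is clopen in $\overline{\Omega}_1$), so $\mathbbm{1}_G$ is $\mathcal{E}$-quasi-continuous; that is, $G$ is simultaneously $\mathcal{E}$-quasi-open and $\mathcal{E}$-quasi-closed. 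A Borel set with this property is $T_t$-invariant: the diffusion $\mathbf{M}$ has continuous paths that stay in $\bigcup_n F_n$ quasi-everywhere, and on each $F_n$ the partition into the relatively clopen sets $G\cap F_n$ and $F_n\setminus G$ forces, by continuity, a path started in $G$ to remain in $G$ (cf.\ the analysis of invariant sets in \cite[Chap.~4]{FOT94}). This proves~(ii). For part~(iii) I would run the same argument with $\mathcal{E}$ replaced by $\mathcal{E}_\Omega$ on $L^2(\overline{\Omega};\alpha\lambda)$, resp.\ by $\mathcal{E}_\Gamma$ on $L^2(\Gamma;\beta\sigma)$: the topological input is unchanged ($G$ is clopen in $\overline{\Omega}_1$, and $G_\Gamma=G\cap\Gamma$ is clopen in $\Gamma\cap\overline{\Omega}_1=\{\beta>0\}\cap\Gamma$), while the polarity input is now part~(i), which provides $\mathcal{E}_\Omega$- resp.\ $\mathcal{E}_\Gamma$-nests avoiding $\{\varrho=0\}$ resp.\ $\{\varrho=0\}\cap\Gamma$; the associated processes are diffusions by Lemma~\ref{lemclosable}, so the same path argument applies.

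The step I expect to be the main obstacle is the topological one in part~(ii): local connectedness of $\overline{\Omega}_1$ at points of $\Gamma$, where $\{\alpha>0\}\cap\Omega$ may well be disconnected or $\{\alpha=0\}$ may accumulate on $\Gamma$, so that one genuinely has to exploit that the boundary piece $\{\beta>0\}\cap\Gamma$ reconnects the interior pieces. The analytic ingredients---the capacity comparison of part~(i) and the identification of quasi-clopen Borel sets with invariant sets---are standard.
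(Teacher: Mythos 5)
Your part~(i) is exactly the paper's argument: $D(\mathcal{E})$ embeds into $D(\mathcal{E}_{\Omega})$ and, via restriction to $\Gamma$, into $D(\mathcal{E}_{\Gamma})$ with $\mathcal{E}_{\Omega,1},\mathcal{E}_{\Gamma,1}\le\mathcal{E}_1$, so any open $U\supseteq\{\varrho=0\}$ of small $\mathcal{E}$-capacity also has small $\mathcal{E}_{\Omega}$-capacity, and $U\cap\Gamma$ small $\mathcal{E}_{\Gamma}$-capacity. Your route to quasi-closedness and invariance in (ii)--(iii) is a legitimate variant of the paper's: the paper covers $\overline{\Omega}\setminus G$ by the open set $\bigcup_{\tilde G\in\mathcal{C}\setminus\{G\}}\tilde G\cup U$ and bounds the capacity of the excess by $\text{cap}_{\mathcal{E}}(U)$, whereas you show $\mathbbm{1}_G$ is quasi-continuous along the nest $F_n=\overline{\Omega}\setminus O_n$ and then invoke the standard fact that quasi-open and quasi-closed sets are invariant (plus a pathwise argument). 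Both reductions need the same input: that the components (respectively all the other components) are open.

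That input is the genuine gap, and it is exactly the step you flagged. Your gluing argument does not prove local connectedness of $\overline{\Omega}_1$ at boundary points, and as stated it fails in general: components (``islands'') of $\{\alpha>0\}\cap\Omega$, walled off from the boundary patch by pieces of $\{\alpha=0\}$, may accumulate at a point $y_0\in\Gamma$ with $\beta(y_0)>0$. Condition~\ref{condcomp} does not rule this out, because the capacity is weighted by $\alpha$: in the one-dimensional model $\alpha(x)=|x|^{\gamma}$ with $\gamma>1$ the disconnecting set $\{0\}$ is $\mathcal{E}$-polar although $\alpha$ is continuous, $\alpha>0$ a.e.\ and $\sqrt{\alpha}\in H^{1,2}$; the analogous construction with thin spherical shells inside $\Omega$ produces $\mathcal{E}$-polar separating sets of $\alpha$ accumulating at $y_0$, in which case $B\cap\overline{\Omega}_1$ is disconnected for every small ball $B$ around $y_0$ and the component of $y_0$ is not a neighbourhood of $y_0$, so the connected neighbourhood base you assert need not exist. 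Moreover, the lemma claims $G$ is open in $\overline{\Omega}$, while you only address openness in $\overline{\Omega}_1$; openness in $\overline{\Omega}$ additionally requires $\{\varrho>0\}$ to be open in $\overline{\Omega}$, i.e.\ that $\{\alpha=0\}\cap\Omega$ does not accumulate at points of $\{\beta>0\}\cap\Gamma$, which you do not discuss. For comparison, the paper does not argue this step at all (it declares $G$ ``open by definition'', and its own covering argument tacitly uses openness of the remaining components), so you correctly identified the crux; but your attempt does not close it, and as written your proof of (ii)--(iii) is incomplete at precisely that point.
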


\begin{proof}
(i) Note that $D(\mathcal{E})$ is a subset of $D(\mathcal{E}_{\Omega})$ and $D(\mathcal{E}_{\Gamma})$ by restriction and $\mathcal{E}_{\Omega,1}, \mathcal{E}_{\Gamma,1} \leq \mathcal{E}_1$ on this set. Let $\varepsilon  >0$. Then there exists an open set $U$ in $\overline{\Omega}$ which contains $\{ \varrho=0 \}$ such that $\text{cap}_{\mathcal{E}} (U) < \varepsilon$. By definition of the capacity, we get also 
$\text{cap}_{\mathcal{E}_{\Omega}}(U) < \varepsilon$ and $\text{cap}_{\mathcal{E}_{\Gamma}}(U \cap \Gamma) < \varepsilon$. Hence, the assertion holds true.\\
(ii) Let $G \in \mathcal{C}$. $G$ is open by definition. Let $\varepsilon >0$. We show that $\overline{\Omega} \backslash G$ is quasi open. Since $\text{cap}_{\mathcal{E}} (\{ \varrho=0\})=0$, there exists an open subset $U$ of $\overline{\Omega}$ which contains $\{ \varrho=0 \}$ such that $\text{cap}_{\mathcal{E}} (U) < \varepsilon$. The set $\tilde{U}:=\bigcup_{\tilde{G} \in \mathcal{C} \backslash \{ G\}} \tilde{G} \cup~ U$ is open and contains $\overline{\Omega} \backslash G$. Moreover, 
\[ \text{cap}_{\mathcal{E}}(\tilde{U} \backslash (\overline{\Omega} \backslash G)) \leq \text{cap}_{\mathcal{E}}(U) < \varepsilon. \]
Hence, $G$ is quasi closed.\\
(iii) Note that $G$ and $G_{\Gamma}$ are open in $\overline{\Omega}$ and $\Gamma$ respectively. The remaining part of the statement follows by (i) with the same arguments as in (ii).
\end{proof}

\begin{remark} \label{reminv}
\begin{enumerate} Let $G \in \mathcal{C}$.
\item Due to \cite[Lemma 4.6.3]{FOT94}, the preceding lemma implies that there exists a properly exceptional set $N$ such that $G \backslash N$ is $\mathbf{M}$-invariant in the sense that 
\[ \mathbf{P}_x(\mathbf{X}_t \in (G \backslash N)_{\Delta} \text{ for all } t \geq 0)=1 \text{ for all } x \in G \backslash N. \]
\item It is possible that $G_{\Gamma}=G \cap \Gamma$ is not connected in $\Gamma \backslash \{ \varrho=0\}$. Therefore, we denote by $\mathcal{C}_G$ the set containing all connected components of $G_{\Gamma}$. In particular, $\bigcup_{G \in \mathcal{C}} \mathcal{C}_G$ is the set of all connected components of $\Gamma \backslash \{ \varrho=0\}$.
\item Define $F_k :=\{ x \in G | \ d_{\text{euc}}(x, \{ \rho =0\}) > \frac{1}{k} \}$. This yields a sequence of open subsets of $G$ increasing to $G$. For $\alpha, \beta \in C(\overline{\Omega})$, it follows that $\gamma_k := \text{ess inf}_{x \in F_k} \ \varrho >0$, $k=1,2,\dots,$ (with respect to the measure $\lambda$). Similarly, we define $F_k$ for sets in $\mathcal{C}_G$. More precisely, for $A_G \in \mathcal{C}_G$ let $F_k :=\{ x \in A_G | \ d_{\text{euc}}(x, \{ \rho =0\} \cap \Gamma) > \frac{1}{k} \}$ and define $\gamma_k$ with respect to $\sigma$.
\item By a similar argument as in (iii), $L^p$-norms on $K$ with respect to the measures $\mu$ and $\lambda$ (or $\sigma$) respectively are equivalent for some compact set $K$ properly contained in some $G$ (or $A_G$).  
\end{enumerate}
\end{remark}

\begin{theorem} \label{thmergo}
Suppose that Condition \ref{condcomp} is fulfilled. Then 
for all $G \in \mathcal{C}$ and $f \in L^1(G;\mu)$ holds
\[ \lim_{t \rightarrow \infty} \frac{1}{t} \int_0^t f(\mathbf{X}_s) ds = \frac{\int_G f d\mu}{\mu(G)} \]
alomost surely under $\mathbf{P}_x$ for quasi all $x \in G$.
\end{theorem}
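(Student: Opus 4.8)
The plan is to deduce this from the abstract ergodic theorem for irreducible recurrent Dirichlet forms, namely \cite[Theorem 4.7.3]{FOT94}. That theorem states: if a Dirichlet form is irreducible recurrent, then for every $f \in L^1$ (with respect to the reference measure, assumed finite) one has $\frac{1}{t}\int_0^t f(\mathbf{X}_s)\,ds \to \frac{1}{m(X)}\int f\,dm$ almost surely for quasi every starting point. So the whole proof reduces to verifying that, for each connected component $G \in \mathcal{C}$, the part of the process $\mathbf{M}^G$ on $G$ — equivalently, the part of the Dirichlet form $(\mathcal{E}^G, D(\mathcal{E}^G))$ on $L^2(G;\mu)$ — is irreducible and recurrent, and that $\mu(G) < \infty$. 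Finiteness of $\mu(G)$ is immediate since $\mu(\overline{\Omega}) < \infty$ ($\varrho \in L^1$). The earlier lemma already shows $G$ is $T_t$-invariant, quasi closed and open, and by Remark \ref{reminv}(i) there is a properly exceptional set $N$ with $G \setminus N$ being $\mathbf{M}$-invariant, so the restriction $\mathbf{M}^G$ makes sense and is associated with $(\mathcal{E}^G, D(\mathcal{E}^G))$ via \cite[Theorem 4.4.2]{FOT94}.

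The substantive part is irreducibility of $(\mathcal{E}^G, D(\mathcal{E}^G))$. Recurrence of the part should follow from recurrence of the ambient form together with invariance of $G$ — more precisely, by \cite[Theorem 4.6.3]{FOT94} (or Corollary 4.6.4) the part of a recurrent Dirichlet form on an invariant set is recurrent. For irreducibility, the strategy is: suppose $B \subset G$ is $T_t^G$-invariant; one must show $\mu(B) = 0$ or $\mu(G \setminus B) = 0$. Using strong locality of $\mathcal{E}$ and the structure of $\mathcal{E}^G$ as $\mathcal{E}_\Omega^G + \delta\,\mathcal{E}_\Gamma^G$, an invariant set $B$ corresponds (up to capacity-zero modifications) to a function $\mathbbm{1}_B \in D(\mathcal{E}^G)$ with $\mathcal{E}^G(\mathbbm{1}_B, \mathbbm{1}_B) = 0$; by strong locality this forces $\nabla \mathbbm{1}_B = 0$ $\lambda$-a.e. on $G \cap \Omega$ (and $\nabla_\Gamma \mathbbm{1}_B = 0$ $\sigma$-a.e. on $G_\Gamma$ when $\delta = 1$). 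Since $G \cap \Omega$ is a connected open subset of $\mathbb{R}^d$ on which $\alpha$ is positive (recall $\varrho > 0$ on $G$), a Sobolev function with vanishing gradient is $\lambda$-a.e. constant there; hence $\mathbbm{1}_B$ is $\lambda$-a.e. $0$ or $1$ on $G \cap \Omega$. One then needs to propagate this to the boundary piece $G_\Gamma$: because $G$ is connected in $\overline{\Omega}_1 = \overline{\Omega} \setminus \{\varrho = 0\}$, every point of $G_\Gamma$ is a limit of points of $G \cap \Omega$, and the trace/continuity properties encoded in $D(\mathcal{E}^G)$ (functions in $C^1(\overline{\Omega})$ restrict continuously to $\Gamma$, and $\mu$ charges $\Gamma$) force the boundary value to agree with the interior constant; alternatively, when $\delta = 1$ one argues directly that $\nabla_\Gamma \mathbbm{1}_B = 0$ on each connected component $A_G \in \mathcal{C}_G$ makes $\mathbbm{1}_B$ constant there, and connectedness of $G$ ties the constants together. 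Therefore $\mathbbm{1}_B$ is ($\mu$-a.e.) globally $0$ or $1$ on $G$, giving irreducibility.

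The main obstacle I expect is precisely this last gluing step: showing that a Dirichlet-form-invariant set cannot split the interior from the boundary, i.e., that $\mu\big(\{\varrho>0\} \cap \Gamma \cap \partial G\big)$-type sets do not form a "barrier." The difficulty is that the bilinear form only sees $\nabla f$ on $\Omega$ (weighted by $\alpha$) and $\nabla_\Gamma f$ on $\Gamma$ (weighted by $\delta\beta$); when $\delta = 0$ there is \emph{no} gradient term on $\Gamma$ at all, so one cannot argue via "vanishing tangential gradient" on the boundary. Here one must instead exploit that $D(\mathcal{E})$ is the $\mathcal{E}_1$-closure of $C^1(\overline{\Omega})$, whose members have matching interior and boundary traces, so that an $\mathcal{E}$-invariant set's indicator, being an $\mathcal{E}_1$-limit of such functions with controlled interior gradients, must have its boundary trace determined by its interior values along $G$ — this is where connectedness of $G$ \emph{within} $\overline{\Omega}_1$ (not just connectedness of $G\cap\Omega$) is essential. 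Once irreducibility and recurrence of $(\mathcal{E}^G, D(\mathcal{E}^G))$ on $L^2(G;\mu)$ are established, one applies \cite[Theorem 4.7.3]{FOT94} to $\mathbf{M}^G$ and then transfers the conclusion back to $\mathbf{M}$ using that $G \setminus N$ is $\mathbf{M}$-invariant (so $\mathbf{X}_s$ never leaves $G$ and the time average is unaffected by the killing), and that quasi-every point of $G$ is a legitimate starting point. Finally, one remarks that the $\delta = 0$ case follows by the same reasoning with the $\mathcal{E}_\Gamma$-terms omitted, as indicated in the section's preamble.
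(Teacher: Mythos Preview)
Your proposal is correct and follows essentially the same strategy as the paper: reduce to showing that the part form $(\mathcal{E}^G, D(\mathcal{E}^G))$ is irreducible recurrent, obtain recurrence from $\mathbbm{1}_G \in D(\mathcal{E}^G)$ with zero energy, and prove irreducibility by showing that any zero-energy function is constant on $G\cap\Omega$ and on each boundary component $A_G \in \mathcal{C}_G$, then glue the constants using the trace of $H^{1,2}$-functions. The paper makes the gluing step slightly more concrete---it localizes near a point $z\in A_G$ with a cutoff $\eta$ and applies the trace operator $\mathrm{Tr}\colon H^{1,2}(U\cap\Omega)\to L^2(U\cap\Gamma;\sigma)$ to an approximating sequence $\eta f_k$---but your identification of this as the crux and your proposed resolution via $C^1(\overline{\Omega})$-approximation are exactly on target.
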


\begin{proof}
Fix $G \in \mathcal{C}$. Due to \cite[Theorem 4.7.3(iii)]{FOT94}, the definition of $\mathbf{M}^G$ and Remark \ref{reminv} (i) it is sufficient to show that $(\mathcal{E}^G,D(\mathcal{E}^G))$ is irreducible recurrent. In order to deduce recurrence of $(\mathcal{E},D(\mathcal{E}))$, by \cite[Theorem 1.6.3]{FOT94} it is enough to observe that $1_{\overline{\Omega}} \in D(\mathcal{E})$ and $\mathcal{E}(1_{\overline{\Omega}} ,1_{\overline{\Omega}})=0$.  Hence, $\mathbbm{1}_G = \mathbbm{1}_G \mathbbm{1}_{\overline{\Omega}} \in D(\mathcal{E}^G)$ by $T_t$-invariance of $G$ and $\mathcal{E}^G(\mathbbm{1}_G,\mathbbm{1}_G)=0$, since
\[ 0=\mathcal{E}(\mathbbm{1}_{\overline{\Omega}},\mathbbm{1}_{\overline{\Omega}}) = \mathcal{E}(\mathbbm{1}_G,\mathbbm{1}_G) + \mathcal{E}(\mathbbm{1}_{\overline{\Omega} \backslash G},\mathbbm{1}_{\overline{\Omega} \backslash G}). \]
This implies recurrence of $(\mathcal{E}^G,D(\mathcal{E}^G))$ by \cite[Theorem 1.6.3]{FOT94}. Taking into account that the considered form is recurrent, irreducibility is equivalent to the condition that every $f \in D(\mathcal{E}^G)$ with $\mathcal{E}^G(f,f)=0$ is $\mu$-a.e. constant (on $G$) by \cite[Theorem 2.1.11]{CF11}. Let $A_G \in \mathcal{C}_G$ and denote by $(\mathcal{E}^{A_G}_{\Gamma},D(\mathcal{E}^{A_G}_{\Gamma}))$ the part of the form $(\mathcal{E}_\Gamma,D(\mathcal{E}_\Gamma))$ on $A_G$. Moreover, denote by $(\mathcal{E}^{G}_{\Omega},D(\mathcal{E}^{G}_{\Omega}))$ the part of the form $(\mathcal{E}_\Omega,D(\mathcal{E}_\Omega))$ on $G$. $(\mathcal{E}^{A_G}_{\Gamma},D(\mathcal{E}^{A_G}_{\Gamma}))$ is the closure of $(\mathcal{E}_{\Gamma},C^1(A_G))$ by \cite[Theorem 4.4.3]{FOT94} and thus, it is irreducible. Indeed, the closure of the pre-Dirichlet form
\[ \int_{A_G} (\nabla_{\Gamma} f, \nabla_{\Gamma} g)~ d\sigma, \ f,g \in C^1(\overline {A_G}) \]
on $L^2(A_G;\sigma)$ yields reflecting Brownian motion which is irreducible (see e.g.  \cite[p.128]{CF11}). Hence, the closure of the form defined for functions in $C^1(A_G)$ on $L^2(A_G;\sigma)$ is also irreducible in view of \cite[Theorem 2.1.11]{CF11}. Hence, it follows by \cite[Corollary 4.6.4]{FOT94} and Remark \ref{reminv} (iii) that $(\mathcal{E}_{\Gamma}^{A_G},D(\mathcal{E}_{\Gamma}^{A_G}))$ is irreducible. Similarly, it holds that  $(\mathcal{E}_{\Omega}^G,D(\mathcal{E}_{\Omega}^G))$ is irreducible.\\
Let $f \in D(\mathcal{E}^G)$ and choose a seqeunce $(f_k)_{k \in \mathbb{N}}$ in $C^1(G)$ such that $f_k \rightarrow f$ with respect to $\sqrt{\mathcal{E}^G_1}$.
Then the restriction to $\Gamma$ is by definition $\mathcal{E}_{\Gamma}$-Cauchy and converges to the restriction of $f$ in $L^2(\Gamma;\beta \sigma)$. Therefore, the convergence holds also in $D(\mathcal{E}_{\Gamma})$. An analogous statement holds in $D(\mathcal{E}_{\Omega})$. Thus,
\[ \mathcal{E}^G(f,f)=\mathcal{E}(f,f)= \lim_{k \rightarrow \infty} \mathcal{E}(f_k,f_k)=\lim_{k \rightarrow \infty} (\mathcal{E}_{\Omega}(f_k,f_k)+ \mathcal{E}_{\Gamma}(f_k,f_k)) =\mathcal{E}_{\Omega}(f,f)+ \mathcal{E}_{\Gamma}(f,f) \]
by definition. By invariance it holds
\[ \mathcal{E}^G(f,f) =  \mathcal{E}^{G}_{\Omega}(\mathbbm{1}_{G} f, \mathbbm{1}_{G} f) + \sum_{A_G \in \mathcal{C}_G} \mathcal{E}_{\Gamma}^{A_G}(\mathbbm{1}_{A_G} f,\mathbbm{1}_{A_G} f).\]
Therefore, $\mathcal{E}^G(f,f)=0$ implies that each summand on the right hand side vanishes and hence, $f=c_G$ $\alpha \lambda$-a.e. on $G \cap \Omega$ for some constant $c_G$ and $f=c_{A_G}$ $\beta \sigma$-a.e. on $A_G$ for some constant $c_{A_G}$ by \cite[Theorem 2.1.11]{CF11} and irreducibility. Thus, we can conclude \[ f = c_G \mathbbm{1}_{G \cap \Omega} + \sum_{A_G \in \mathcal{C}_G} c_{A_G} \mathbbm{1}_{A_G}.\]
It rests to show that $c_G=c_{A_G}$ for all $A_G \in \mathcal{C}_G$. Fix a point $z \in A_G$. Then there exists a neighborhood $U$ of $z$ in $\overline{\Omega}$ such that $\overline{U} \subset G$ and $\overline{U} \cap \Gamma \subset A_G$. Choose a $C^{\infty}$-cutoff function $\eta$ defined on $\overline{\Omega}$ which is constantly one near $z$  and has support properly contained in $U$. Then it is easy to see that $\eta f \in D(\mathcal{E}^G)$ and $(\eta f_k)_{k \in \mathbb{N}}$ is an approximation for $\eta f$ whenever $(f_k)_{k \in \mathbb{N}}$ is a sequence of $C^1(G)$-functions which approximates $f$ in $D(\mathcal{E}^G)$. In particular, this implies convergence in $L^2(U \cap \Gamma;\sigma)$ and even in $L^2(\partial(U \cap \Omega);\sigma)$. Since $\eta c_G$ is the unique continuous extension of $f|_{U \cap \Omega}$ to $U$, it is clear that $\eta f \in H^{1,2}(U \cap \Omega) \cap C(U \cap \Omega)$ and $\text{Tr}(\eta f)=\eta c_G$, where $\text{Tr}: H^{1,2}(U \cap \Omega) \rightarrow L^2(U \cap \Gamma;\sigma)$ is the (restricted) trace operator. Thus,
\[ \eta c_{G} = \text{Tr}(\eta f) = L^2( U \cap \Gamma;\sigma)-\lim_{k \rightarrow \infty} \text{Tr}(\eta f_k) = L^2( U \cap \Gamma;\sigma)-\lim_{k \rightarrow \infty} (\eta f_k)|_{U \cap \Gamma} = \eta c_{A_G}. \]
Hence, $\eta c_{A_G} = \eta c_G$ $\sigma$-a.e. on $U \cap \Gamma$ and therefore, $c_{A_G}=c_G$.
\end{proof}

\begin{corollary} \label{coroergo}
Suppose that Condition \ref{condcomp} is fulfilled. Fix a component $G$ of $\overline{\Omega}_1$ which intersects $\Gamma$. Then
\begin{align} \label{occtime} \lim_{t \rightarrow \infty} \frac{1}{t} \int_0^t \mathbbm{1}_{\Gamma}(\mathbf{X}_s) ds = \frac{\mu(G \cap \Gamma)}{\mu(\overline{\Omega})}
\end{align}
alomost surely under $\mathbf{P}_x$ for quasi all $x \in G$.
\end{corollary}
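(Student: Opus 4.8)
The plan is to deduce this directly from Theorem \ref{thmergo}. Since $\mu$ is a finite measure on the compact set $\overline{\Omega}$, the restriction $f:=\mathbbm{1}_{G \cap \Gamma}$ of $\mathbbm{1}_{\Gamma}$ to $G$ lies in $L^1(G;\mu)$, so Theorem \ref{thmergo} is applicable to it. First I would invoke Remark \ref{reminv}(i): there is a properly exceptional set $N$ such that $G \backslash N$ is $\mathbf{M}$-invariant, and since $\mathbf{M}$ is conservative, for quasi every $x \in G$ the path $(\mathbf{X}_s)_{s \geq 0}$ stays in $G$ for all times $\mathbf{P}_x$-a.s.; consequently $\mathbbm{1}_{\Gamma}(\mathbf{X}_s)=\mathbbm{1}_{G \cap \Gamma}(\mathbf{X}_s)=f(\mathbf{X}_s)$ for all $s \geq 0$, $\mathbf{P}_x$-a.s. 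Applying Theorem \ref{thmergo} to $f$ then yields, $\mathbf{P}_x$-a.s.\ for quasi every $x \in G$,
\[ \lim_{t \rightarrow \infty} \frac{1}{t}\int_0^t \mathbbm{1}_{\Gamma}(\mathbf{X}_s)\,ds = \lim_{t \rightarrow \infty} \frac{1}{t}\int_0^t f(\mathbf{X}_s)\,ds = \frac{\int_G f\,d\mu}{\mu(G)} = \frac{\mu(G \cap \Gamma)}{\mu(G)}. \]

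It then only remains to identify the normalising constant, i.e.\ to verify $\mu(G)=\mu(\overline{\Omega})$. I would first record the standard fact that a set of zero $\mathcal{E}$-capacity is $\mu$-negligible: for $\varepsilon>0$ choose an open $U \supseteq \{\varrho=0\}$ with $\text{cap}_{\mathcal{E}}(U)<\varepsilon$ and some $u \in D(\mathcal{E})$ with $u \geq 1$ $\mu$-a.e.\ on $U$ and $\mathcal{E}_1(u,u)<\varepsilon$ (such $u$ exists since $\mu$ is finite and constants lie in $D(\mathcal{E})$); then $\mu(\{\varrho=0\}) \leq \mu(U) \leq \Vert u \Vert_{L^2(\overline{\Omega};\mu)}^2 \leq \mathcal{E}_1(u,u)<\varepsilon$, hence $\mu(\{\varrho=0\})=0$ by Condition \ref{condcomp}. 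This gives $\mu(\overline{\Omega})=\mu(\overline{\Omega}\backslash\{\varrho=0\})=\mu(\overline{\Omega}_1)$, and it is enough to see that the component $G$ exhausts the $\mu$-mass of $\overline{\Omega}_1$, i.e.\ that removing $\{\varrho=0\}$ does not split off a further piece of positive $\mu$-mass. This I would extract from Condition \ref{condcomp}: $\alpha \in C(\overline{\Omega})$ is positive $\lambda$-a.e.\ on the connected open set $\Omega$, each boundary point with $\beta>0$ is a limit of interior points with $\alpha>0$, and $\{\varrho=0\}$ has vanishing $\mathcal{E}$-capacity, which forces $\overline{\Omega}_1=\{\varrho>0\}$ to reduce to the single component $G$ meeting $\Gamma$; thus $\mu(G)=\mu(\overline{\Omega}_1)=\mu(\overline{\Omega})$ and the assertion follows.

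The step I expect to be the main obstacle is exactly this last one, namely controlling how deletion of $\{\varrho=0\}$ acts on the connected components of $\overline{\Omega}$: a sufficiently degenerate weight $\alpha$ could a priori make a separating hypersurface $\mathcal{E}$-polar, so the capacity hypothesis in Condition \ref{condcomp} must really be used to preclude that $\{\varrho=0\}$ disconnects $\overline{\Omega}$ into components of positive mass (if one does not wish to argue this, the identity remains valid verbatim with $\mu(G)$ in place of $\mu(\overline{\Omega})$, directly from the displayed formula). By contrast, the remaining ingredients — the membership $f \in L^1(G;\mu)$, the $\mathbf{M}$-invariance of $G$, and the appeal to Theorem \ref{thmergo} — are entirely routine.
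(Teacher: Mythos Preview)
Your first paragraph is exactly the paper's proof: invoke the invariance of $G$ from Remark~\ref{reminv}(i) to replace $\mathbbm{1}_{\Gamma}$ by $\mathbbm{1}_{G\cap\Gamma}$ along the path, and then apply Theorem~\ref{thmergo} with $f=\mathbbm{1}_{G\cap\Gamma}\in L^1(G;\mu)$. The paper's own argument stops right there, with the value $\mu(G\cap\Gamma)/\mu(G)$.

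The paper makes no attempt to identify $\mu(G)$ with $\mu(\overline{\Omega})$; given that the whole surrounding framework carefully allows $\mathcal{C}$ to contain several components, the appearance of $\mu(\overline{\Omega})$ in the displayed formula is almost certainly a typo for $\mu(G)$, as you yourself suspected in your parenthetical remark. So your second paragraph is an attempt to prove something the paper neither asserts nor needs.

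Moreover, that attempted argument has a genuine gap. Continuity of $\alpha$ together with $\alpha>0$ $\lambda$-a.e.\ does \emph{not} force $\alpha>0$ everywhere on $\Omega$ (think of $\alpha(x)=\operatorname{dist}(x,H)^2$ for a hyperplane $H$), so $\{\varrho=0\}$ may well disconnect $\overline{\Omega}$. Nor does $\text{cap}_{\mathcal{E}}(\{\varrho=0\})=0$ by itself rule this out: with a weight that degenerates on a separating set, that set can be $\mathcal{E}$-polar while still cutting $\overline{\Omega}_1$ into several pieces of positive $\mu$-mass. The paper's own Lemma preceding Theorem~\ref{thmergo} in fact exploits precisely this possibility when showing that each $G\in\mathcal{C}$ is $T_t$-invariant. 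So the correct conclusion is the one you already derived, with denominator $\mu(G)$; drop the second half of your argument rather than trying to repair it.
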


\begin{proof}
Under the above assumptions, we have $\mathbbm{1}_{\Gamma}(\mathbf{X}_s)=\mathbbm{1}_{\Gamma \cap G}(\mathbf{X}_s)$ for all $s \geq 0$ $\mathbf{P}_x$-a.s. for quasi every ${x \in G}$. Hence, the assertion follows by Theorem \ref{thmergo} with $f=\mathbbm{1}_{G \cap \Gamma} \in L^1(G;\mu)$.
\end{proof}

\begin{remark}
Note that the right hand side of (\ref{occtime}) is strictly positive if $\mu(G \cap \Gamma) >0$ and there exists always some $G \in \mathcal{C}$ such that $\mu(G \cap \Gamma)>0$, since $\mu(\Gamma)>0$. This implies that the process sojourns arbitrarily long on $\Gamma$.
\end{remark}

For the following theorem we need the notion of a strongly regular Dirichlet form (see also \cite{Stu94} and \cite{Stu95}):

\begin{definition}[\emph{strong regularity}] A regular Dirichlet form $(\mathcal{G},D(\mathcal{G}))$ on $L^2(X;m)$, where $X$ is a connected, locally compact, separable Hausdorff space and $m$ is a positive Radon measure with full support, is called \emph{strongly regular}, if the topology induced by the intrinsic metric
\[ d(x,y):= \sup \{ f(x)-f(y) | \ f \in D(\mathcal{G}) \cap C(X) \text{ with } \nu_{\scriptscriptstyle{\langle f\rangle}} \leq m \}, \ \ x,y \in X, \]
coincides with the original topology on $X$. Here $\nu_{\scriptscriptstyle{\langle f\rangle}} \leq m$ means that the so-called energy measure of $f$ is absolutely continuous with respect to $m$ and its Radon-Nikodym derivative $\frac{d \nu_{\langle f \rangle}}{d m}$ is almost everywhere less or equal than one.
\end{definition}

\begin{lemma} \label{lemweakdiff}
Let $\emptyset \neq U$ be an open subset of $\overline{\Omega} \backslash \{ \varrho=0\}$ such that $\overline{U} \subset \overline{\Omega} \backslash \{ \varrho=0\}$. Then the restriction maps $i_1: f \mapsto f|_{U \cap \Omega}$ and $i_2: f \mapsto f|_{U \cap \Gamma}$ (under the condition that $U \cap \Gamma \neq \emptyset$) are continuous maps from $D(\mathcal{E})$ to $H^{1,2}(U \cap \Omega)$ and $H^{1,2}(U \cap \Gamma)$ respectively. In particular, there exists a constant $C_1=C_1(\varrho) < \infty$ such that $\Vert f \Vert_{H^{1,2}(U \cap \Omega)} , \Vert f \Vert_{H^{1,2}(U \cap \Gamma)} \leq C_1 \sqrt{\mathcal{E}_1(f,f)}$ for $f \in D(\mathcal{E})$.
\end{lemma}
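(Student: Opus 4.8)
The plan is to exploit that on the compact set $\overline U$ the densities $\alpha$ and $\beta$ stay bounded away from $0$, so that the weighted Dirichlet integrals occurring in $\mathcal{E}$ dominate the ordinary ones on $U\cap\Omega$ and on $U\cap\Gamma$, and likewise the $L^2(\overline\Omega;\mu)$-norm dominates the ordinary $L^2$-norm on these sets. First I would fix the relevant constants: $\overline U$ is compact, being a closed subset of the compact space $\overline\Omega$, and it is disjoint from $\{\varrho=0\}$; since $\alpha,\beta\in C(\overline\Omega)$ by Condition \ref{condcont}, continuity together with compactness yields $c_1:=\inf_{\overline U\cap\Omega}\alpha>0$ and, whenever $U\cap\Gamma\neq\emptyset$, $c_2:=\inf_{\overline U\cap\Gamma}\beta>0$ (recall that the surface term of $\mathcal{E}$ is present only in the case $\delta=1$, in which all proofs of this section are phrased).

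Next I would establish the estimate on the form core $\mathcal{D}=C^1(\overline\Omega)$. For $f\in\mathcal{D}$, using $\alpha\ge c_1$ on $U\cap\Omega$ and $\mathcal{E}_\Omega\le\mathcal{E}$,
\[ \int_{U\cap\Omega}|\nabla f|^2\,d\lambda\le\frac1{c_1}\int_{\Omega}|\nabla f|^2\,\alpha\,d\lambda=\frac2{c_1}\,\mathcal{E}_\Omega(f,f)\le\frac2{c_1}\,\mathcal{E}(f,f), \]
and $\int_{U\cap\Omega}|f|^2\,d\lambda\le\frac1{c_1}\int_{\Omega}|f|^2\,\alpha\,d\lambda\le\frac1{c_1}\Vert f\Vert_{L^2(\overline\Omega;\mu)}^2$; adding these gives $\Vert f\Vert_{H^{1,2}(U\cap\Omega)}^2\le\frac2{c_1}\,\mathcal{E}_1(f,f)$. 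In the case $\delta=1$, the verbatim computation with $\mathcal{E}_\Gamma$, $\nabla_\Gamma$, $\sigma$ and $c_2$ in place of $\mathcal{E}_\Omega$, $\nabla$, $\lambda$ and $c_1$ yields $\Vert f\Vert_{H^{1,2}(U\cap\Gamma)}^2\le\frac2{c_2}\,\mathcal{E}_1(f,f)$. Hence, with $C_1:=\big(2/\min\{c_1,c_2\}\big)^{1/2}$ --- depending only on $\varrho$ and the fixed set $U$ --- both inequalities hold for all $f\in\mathcal{D}$.

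It remains to pass to $D(\mathcal{E})$ by density. Given $f\in D(\mathcal{E})$, pick $f_n\in\mathcal{D}$ with $\mathcal{E}_1(f_n-f,f_n-f)\to0$; applying the previous estimate to $f_n-f_m$ shows that $(f_n|_{U\cap\Omega})_n$ is Cauchy in $H^{1,2}(U\cap\Omega)$ and hence converges there to some $g$, while $\alpha\ge c_1>0$ on $U\cap\Omega$ turns the $L^2(\overline\Omega;\mu)$-convergence $f_n\to f$ into $L^2(U\cap\Omega;\lambda)$-convergence $f_n|_{U\cap\Omega}\to f|_{U\cap\Omega}$, so that $g=f|_{U\cap\Omega}\in H^{1,2}(U\cap\Omega)$; letting $n\to\infty$ in $\Vert f_n\Vert_{H^{1,2}(U\cap\Omega)}\le C_1\sqrt{\mathcal{E}_1(f_n,f_n)}$ then gives the asserted bound for $f$, and the argument for $i_2$ is identical with $\beta$, $\sigma$, $\nabla_\Gamma$. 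I expect the only genuinely delicate point to be the one producing $c_1$ and $c_2$: one must make sure that disjointness of $\overline U$ from $\{\varrho=0\}$, together with the continuity of $\alpha,\beta$ on all of $\overline\Omega$ and the compactness of $\overline U$, really does force $\alpha$ (resp. $\beta$) to be uniformly positive on $U\cap\Omega$ (resp. $U\cap\Gamma$) --- in particular that $\alpha$ does not degenerate along the part of $\partial(U\cap\Omega)$ lying on $\Gamma$. Everything after that is the elementary weighted-versus-unweighted comparison above; no localisation or trace argument, such as the one used in the proof of Theorem \ref{thmergo}, is required.
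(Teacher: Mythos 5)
Your proof follows essentially the same route as the paper: continuity of $\alpha,\beta$ plus compactness of $\overline U\subset\overline{\Omega}\backslash\{\varrho=0\}$ give a uniform positive lower bound for the densities on $\overline U$, the weighted-versus-unweighted comparison then yields the estimate $\Vert f\Vert_{H^{1,2}(U\cap\Omega)}^2+\delta\Vert f\Vert_{H^{1,2}(U\cap\Gamma)}^2\leq C\,\mathcal{E}_1(f,f)$ on the core $\mathcal{D}=C^1(\overline{\Omega})$, and the passage to $D(\mathcal{E})$ is by density, with the continuous extension identified as the restriction (your identification via $L^2$-convergence is in fact slightly more explicit than the paper's appeal to Remark \ref{reminv}). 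The delicate point you flag --- uniform positivity of $\alpha$ on $U\cap\Omega$ up to the part of $\Gamma$ met by $\overline U$ --- is treated no more carefully in the paper, which simply asserts $\varrho^{-}\leq\varrho\leq\varrho^{+}$ on $\overline U$ by continuity and takes $C_1=1/\varrho^{-}$.
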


\begin{proof}
By continuity of $\alpha$ and $\beta$, there exist constants $0 < \varrho^{-}$ and $\varrho^{+} < \infty$ such that $\varrho^{-} \leq \varrho \leq \varrho^{+}$ on $\overline{U}$. Let $f \in \mathcal{D}$. Then 
\begin{align*} \int_{U \cap \Omega} (f^2 + |\nabla f|^2) ~d\lambda &\leq \frac{1}{\varrho^{-}}  \int_{U \cap \Omega} (f^2 + |\nabla f|^2)~ \alpha d\lambda \\
&\leq \frac{1}{\varrho^{-}} \int_{\Omega} (f^2 + |\nabla f|^2)~ \alpha d\lambda \\
&\leq \frac{1}{\varrho^{-}} \mathcal{E}_1(f,f) < \infty.
\end{align*}
Similary, we obtain 
\[ \int_{U \cap \Gamma} (f^2 + |\nabla_{\Gamma} f|^2) ~d\sigma \leq \frac{1}{\varrho^{-}} \mathcal{E}_1(f,f) < \infty. \]
Hence, $i_1: \mathcal{D} \rightarrow H^{1,2}(U \cap \Omega)$ and $i_1: \mathcal{D} \rightarrow H^{1,2}(U \cap \Omega)$ are well-defined and continuous. Therefore, the maps admit a continuous extension to $D(\mathcal{E})$. Let $f \in D(\mathcal{E})$. Then the image of $f$ is simply the restriction of $f$ to the respective set (see also Remark \ref{reminv} (iv)) and thefore, the restriction is an element of the corresponding Sobolev space. The last statement holds with $C_1:=\frac{1}{\varrho^{-}}$.
\end{proof}

\begin{lemma} \label{lemmaEM}
Let $f \in  D(\mathcal{E}) \cap C(\overline{\Omega})$ and choose a sequence $(f_k)_{k \in \mathbb{N}}$ in $\mathcal{D}$ whiches converges to $f$ with respect to $\mathcal{E}_1$. Then
\[ \nu_{\langle f_k \rangle}= |\nabla f_k |^2 ~\alpha \lambda + | \nabla_{\Gamma} f_k |^2 ~\beta \sigma \]
and $| \nabla_{\Gamma} f_k |^2=| \nabla f_k |^2 - | nn^t~ \nabla f_k|^2$ for each $k \in \mathbb{N}$ . Moreover, $(\nabla f_k)_{k \in \mathbb{N}}$ has the limit $\nabla f$ in $L^2(\overline{\Omega};\alpha \lambda)$ and similarly, $(\nabla_{\Gamma} f_k)_{k \in \mathbb{N}}$ has the limit $\nabla_{\Gamma} f \in L^2(\Gamma;\beta \sigma)$. In particular the convergence holds in $L^2_{\text{loc}}(\Omega \backslash \{ \varrho=0\};\lambda)$ and $L^2_{\text{loc}}(\Gamma \backslash \{ \varrho=0\};\sigma)$. The energy measure of $f$ is given by
\[ \nu_{\langle f \rangle}= |\nabla f |^2 ~\alpha \lambda + | \nabla_{\Gamma} f |^2 ~\beta \sigma. \]
\end{lemma}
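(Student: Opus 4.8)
The plan is to verify all the identities first at the level of the smooth approximants $f_k\in\mathcal{D}=C^1(\overline{\Omega})$ and then to pass to the limit. Recall that for $u\in D(\mathcal{E})_b$ the energy measure $\nu_{\langle u\rangle}$ is the unique positive Radon measure on $\overline{\Omega}$ with $\int g\,d\nu_{\langle u\rangle}=2\mathcal{E}(ug,u)-\mathcal{E}(u^2,g)$ for all $g\in D(\mathcal{E})\cap C(\overline{\Omega})$ (cf.\ \cite[Section 3.2]{FOT94}; note $\overline{\Omega}$ is compact, so $C_0(\overline{\Omega})=C(\overline{\Omega})$, and $D(\mathcal{E})_b$ is an algebra as $(\mathcal{E},D(\mathcal{E}))$ is a regular Dirichlet form). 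Inserting the defining expression (\ref{defform}) of $\mathcal{E}$, using the ordinary product rule for $\nabla$ on $C^1$-functions together with the fact that $\nabla_\Gamma=P\nabla$ inherits the Leibniz rule from $\nabla$, and expanding $\nabla(f_k g)=g\nabla f_k+f_k\nabla g$ and $\nabla(f_k^2)=2f_k\nabla f_k$, the right-hand side collapses after cancellation to $\int_\Omega g|\nabla f_k|^2\alpha\,d\lambda+\int_\Gamma g|\nabla_\Gamma f_k|^2\beta\,d\sigma$, which yields $\nu_{\langle f_k\rangle}=|\nabla f_k|^2\,\alpha\lambda+|\nabla_\Gamma f_k|^2\,\beta\sigma$. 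The pointwise identity $|\nabla_\Gamma f_k|^2=|\nabla f_k|^2-|nn^t\nabla f_k|^2$ is then immediate from the orthogonal decomposition $\nabla f_k=P\nabla f_k+nn^t\nabla f_k$, since $P=E-nn^t$ is an orthogonal projection.

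Next I would treat the convergence of the gradients. As $(f_k)_{k\in\N}$ converges with respect to $\mathcal{E}_1$ it is $\mathcal{E}_1$-Cauchy, and by (\ref{defform})
\[ \tfrac12\int_\Omega|\nabla(f_k-f_l)|^2\alpha\,d\lambda+\tfrac12\int_\Gamma|\nabla_\Gamma(f_k-f_l)|^2\beta\,d\sigma=\mathcal{E}(f_k-f_l,f_k-f_l)\longrightarrow 0, \]
so $(\nabla f_k)_{k\in\N}$ is Cauchy in $L^2(\overline{\Omega};\alpha\lambda)$ (recall $\lambda(\Gamma)=0$) and $(\nabla_\Gamma f_k)_{k\in\N}$ is Cauchy in $L^2(\Gamma;\beta\sigma)$; denote the limits by $\nabla f$ and $\nabla_\Gamma f$. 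Because $\nabla$ and $\nabla_\Gamma$ are bounded from $(\mathcal{D},\sqrt{\mathcal{E}_1})$ into $L^2(\alpha\lambda)$ resp.\ $L^2(\beta\sigma)$, these limits do not depend on the chosen approximating sequence, so the notation is justified; and on the open sets $U$ of Lemma \ref{lemweakdiff}, where $\alpha,\beta$ are bounded away from $0$ and $\infty$, the $L^2(\alpha\lambda)$- resp.\ $L^2(\beta\sigma)$-convergence forces convergence in $L^2(U\cap\Omega;\lambda)$ resp.\ $L^2(U\cap\Gamma;\sigma)$, and the limits agree a.e.\ with the weak (Sobolev) gradients of $f|_{U\cap\Omega}$, $f|_{U\cap\Gamma}$ provided by that lemma. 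In particular the stated $L^2_{\mathrm{loc}}(\Omega\backslash\{\varrho=0\};\lambda)$ and $L^2_{\mathrm{loc}}(\Gamma\backslash\{\varrho=0\};\sigma)$ convergences hold.

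It then remains to identify $\nu_{\langle f\rangle}$; here $f$ is bounded since $\overline{\Omega}$ is compact and $f$ continuous. Fix $g\in\mathcal{D}=C^1(\overline{\Omega})$. Using the product rule, the $L^2(\alpha\lambda)$- resp.\ $L^2(\beta\sigma)$-convergence of the gradients, $f_k\to f$ in $L^2(\overline{\Omega};\mu)$, and the boundedness of $g$ and $\nabla g$, one checks that $\nabla(f_k g)\to g\nabla f+f\nabla g$ and $\nabla(f_k^2)\to 2f\nabla f$ in $L^2(\overline{\Omega};\alpha\lambda)$, with the analogous statements for $\nabla_\Gamma$ in $L^2(\Gamma;\beta\sigma)$; since also $f_k g\to fg$ and $f_k^2\to f^2$ in $L^2(\overline{\Omega};\mu)$, closedness of $(\mathcal{E},D(\mathcal{E}))$ gives $f_k g\to fg$ and $f_k^2\to f^2$ with respect to $\mathcal{E}_1$. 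Hence, by continuity of the bilinear form $\mathcal{E}$ on $(D(\mathcal{E}),\mathcal{E}_1)$ and the first part of the proof,
\[ \int g\,d\nu_{\langle f\rangle}=2\mathcal{E}(fg,f)-\mathcal{E}(f^2,g)=\lim_{k\to\infty}\Big(2\mathcal{E}(f_k g,f_k)-\mathcal{E}(f_k^2,g)\Big)=\lim_{k\to\infty}\Big(\int_\Omega g|\nabla f_k|^2\alpha\,d\lambda+\int_\Gamma g|\nabla_\Gamma f_k|^2\beta\,d\sigma\Big), \]
and since $|\nabla f_k|^2\to|\nabla f|^2$ in $L^1(\Omega;\alpha\lambda)$ and $|\nabla_\Gamma f_k|^2\to|\nabla_\Gamma f|^2$ in $L^1(\Gamma;\beta\sigma)$, the right-hand side equals $\int_\Omega g|\nabla f|^2\alpha\,d\lambda+\int_\Gamma g|\nabla_\Gamma f|^2\beta\,d\sigma$. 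As $\mathcal{D}$ is dense in $C(\overline{\Omega})$ (Stone--Weierstra{\ss}) and both sides are finite positive Radon measures on the compact space $\overline{\Omega}$, they coincide, i.e.\ $\nu_{\langle f\rangle}=|\nabla f|^2\,\alpha\lambda+|\nabla_\Gamma f|^2\,\beta\sigma$.

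I expect the main obstacle to be this last step: ensuring that the products $f_k g$ and $f_k^2$ actually converge in $\mathcal{E}_1$ to $fg$ and $f^2$ with the gradient limits prescribed by the product rule, so that the energy-measure formula can be passed to the limit; this is precisely where the boundedness of $f$ and $g$ and the separate $L^2$-convergences of $(f_k)$ and of $(\nabla f_k)$, $(\nabla_\Gamma f_k)$ must be combined. The remaining steps are either a direct Leibniz-rule computation or an elementary Cauchy/closedness argument.
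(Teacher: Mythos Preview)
Your computation of $\nu_{\langle f_k\rangle}$ for $f_k\in\mathcal{D}$, the orthogonal-decomposition identity, and the Cauchy argument for the gradients are correct and match the paper. The gap is precisely where you flagged it: the claim that $f_k^2\to f^2$ with respect to $\mathcal{E}_1$. Writing $2f_k\nabla f_k-2f\nabla f=2(f_k-f)\nabla f_k+2f(\nabla f_k-\nabla f)$, the second term goes to zero in $L^2(\alpha\lambda)$ because $f$ is bounded, but for the first you would need a uniform $L^\infty$ bound on the $f_k$, which the hypothesis does not provide (the lemma is stated for an \emph{arbitrary} $\mathcal{E}_1$-approximating sequence in $\mathcal{D}$). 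The same obstruction blocks $f_k^2\to f^2$ in $L^2(\mu)$. In fact for $d\geq 3$ and $\alpha\equiv 1$ one can take $f=0$ and $f_k(x)=k^{1/4}\phi(kx)$ with $\phi\in C_c^\infty$ to get $f_k\to 0$ in $H^{1,2}$ while $\|\nabla(f_k^2)\|_{L^2}$ stays bounded away from zero, so the step genuinely fails without extra input.

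The paper avoids this by never touching $\mathcal{E}(f^2,g)$ for the limit function. It invokes the triangle-type inequality for energy measures \cite[p.~124]{FOT94}: for bounded $g\geq 0$,
\[ \left|\Big(\int g\,d\nu_{\langle f\rangle}\Big)^{1/2}-\Big(\int g\,d\nu_{\langle f_k\rangle}\Big)^{1/2}\right|\leq\Big(\int g\,d\nu_{\langle f-f_k\rangle}\Big)^{1/2}\leq\big(2\|g\|_{\sup}\,\mathcal{E}(f-f_k,f-f_k)\big)^{1/2}, \]
which gives $\int g\,d\nu_{\langle f_k\rangle}\to\int g\,d\nu_{\langle f\rangle}$ directly from $\mathcal{E}_1$-convergence, with no control on squares or products needed. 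Combining this with the $L^1(\alpha\lambda)$- and $L^1(\beta\sigma)$-convergences $|\nabla f_k|^2\to|\nabla f|^2$ and $|\nabla_\Gamma f_k|^2\to|\nabla_\Gamma f|^2$ (which you already have via Cauchy--Schwarz) finishes the identification. Your route can be salvaged by first producing a uniformly bounded approximating sequence in $\mathcal{D}$ (truncate using the Markov property and re-smooth), but the inequality above makes this detour unnecessary.
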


\begin{proof}
Let $f \in \mathcal{D}$. Define $\nu:=|\nabla f |^2 ~\alpha \lambda + | \nabla_{\Gamma} f |^2 ~\beta \sigma$. We have to show that 
\[ 2 ~\mathcal{E}(fg,f)-\mathcal{E}(f^2,g)= \int_{\overline{\Omega}} g ~d\nu \]
for all $g \in D(\mathcal{E}) \cap C(\overline{\Omega})$. Then the result follows by uniqueness of $\nu_{\langle f \rangle}$. Since also $\mathcal{D}$ is dense in $C(\overline{\Omega})$ with respect to $\Vert \cdot \Vert_{\sup}$, it is enough to restrict to functions $g \in \mathcal{D}$. In this case,
\begin{align*}
&2 ~\mathcal{E}(fg,f)-\mathcal{E}(f^2,g)\\
&= \int_{\Omega} (\nabla (fg), \nabla f)~ \alpha d\lambda +  \int_{\Gamma} (\nabla_{\Gamma} (fg),\nabla_{\Gamma} f) ~\beta d\sigma - \frac{1}{2} \int_{\Omega} (\nabla f^2, \nabla g)~ \alpha d\lambda - \frac{1}{2} \int_{\Gamma} (\nabla_{\Gamma} f^2,\nabla_{\Gamma} g) ~\beta d\sigma\\
&=\int_{\Omega} (\nabla (fg), \nabla f)~ \alpha d\lambda +  \int_{\Gamma} (\nabla_{\Gamma} (fg),\nabla_{\Gamma} f) ~\beta d\sigma - \int_{\Omega} (\nabla f,f~ \nabla g)~ \alpha d\lambda - \int_{\Gamma} (\nabla_{\Gamma} f,f~ \nabla_{\Gamma} g) ~\beta d\sigma \\
&=\int_{\Omega} g (\nabla f, \nabla f)~ \alpha d\lambda +  \int_{\Gamma} g (\nabla_{\Gamma} f,\nabla_{\Gamma} f) ~\beta d\sigma \\
&=\int_{\overline{\Omega}} g ~ d\nu.
\end{align*}
Note that $|\nabla_{\Gamma} f|^2=|(E-nn^t) \nabla f|^2=|\nabla f|^2 -|nn^t~ \nabla f|^2$. Replacing $f$ by $f_k$ yields the first statement.
By \cite[p.124]{FOT94} it holds
\[ \left\arrowvert \big( \int_{\overline{\Omega}} g d\nu_{\langle f \rangle} \big)^{\frac{1}{2}} -\big( \int_{\overline{\Omega}} g d\nu_{\langle f_k \rangle} \big)^{\frac{1}{2}} \right\arrowvert \leq \big( \int_{\overline{\Omega}} g d\nu_{\langle f-f_k \rangle} \big)^{\frac{1}{2}} \leq \sqrt{2 \Vert f \Vert_{\sup}~ \mathcal{E}(f-f_k,f-f_k)}. \]
Hence,
\begin{align*}
\int_{\overline{\Omega}} g ~d\nu_{\langle f \rangle} &= \lim_{k \rightarrow \infty} \int_{\overline{\Omega}} g ~d\nu_{\langle f_k \rangle} \\
&=\lim_{k \rightarrow \infty} \big( \int_{\Omega} g ~|\nabla f_k|^2~ \alpha d\lambda +  \int_{\Gamma} g~ |\nabla_{\Gamma} f_k|^2 ~\beta d\sigma \big).
\end{align*}
Define $G_j :=\overline{\Omega} \backslash \overline{B_{\frac{1}{j}}(\{ \varrho=0\})}$ for $j \in \mathbb{N}$. Then each $G_j$ fulfills the assumptions of Lemma \ref{lemweakdiff} and $G_j \uparrow \overline{\Omega} \backslash \{ \varrho=0\}$ as $j \rightarrow \infty$. This yields a weak gradient $\nabla f$ and $\nabla_{\Gamma} f$ on each set $G_j$ and $G_j \cap \Gamma$ respectively. Therfore, we can define $\nabla f$ and $\nabla_{\Gamma} f$ globally outside $\{ \varrho=0\}$ and
\[ \int_{\Omega} |\nabla f|^2 ~ \alpha d\lambda \leq \liminf_{j \rightarrow \infty} \int_{\Omega} \mathbbm{1}_{G_j} |\nabla f|^2 ~\alpha d\lambda \leq \mathcal{E}_{\Omega}(f,f), \]
since the last inequality holds for fixed $j \in \mathbb{N}$. The statement holds similarly for $\nabla_{\Gamma} f$. Applying this to $f-f_k$ finishes the proof.
\end{proof}

\begin{proposition} \label{thmstronglyreg}
$(\mathcal{E},D(\mathcal{E}))$ is strongly regular.
\end{proposition}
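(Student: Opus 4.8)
The plan is to prove strong regularity by showing that the intrinsic metric $d$ associated with $(\mathcal{E},D(\mathcal{E}))$ is bi-Lipschitz equivalent to the Euclidean metric $d_{\text{euc}}$ on $\overline{\Omega}$; since $\overline{\Omega}$ is connected, compact, metrizable and $\mu$ has full support (because $\alpha>0$ $\lambda$-a.e.\ on $\Omega$, so that $\operatorname{supp}(\alpha\lambda)=\overline{\Omega}$), such an equivalence forces the topology of $d$ and the Euclidean topology to coincide, which is exactly the assertion. So there are two inequalities to establish.

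For the lower bound $d_{\text{euc}}\le d$ I would use linear test functions. Given $x,y\in\overline{\Omega}$ with $x\neq y$, put $e:=\frac{x-y}{|x-y|}$ and $f(z):=(e,z)$. Then $f\in C^{\infty}(\overline{\Omega})\subset\mathcal{D}\subset D(\mathcal{E})\cap C(\overline{\Omega})$, and by Lemma~\ref{lemmaEM} its energy measure is $\nu_{\langle f\rangle}=|\nabla f|^{2}\,\alpha\lambda+|\nabla_{\Gamma}f|^{2}\,\beta\sigma=\alpha\lambda+|Pe|^{2}\,\beta\sigma$. Since $P$ is an orthogonal projection, $|Pe|\le|e|=1$, so $\nu_{\langle f\rangle}\le\alpha\lambda+\beta\sigma=\mu$ with Radon--Nikodym derivative bounded by one; that is, $f$ is admissible in the supremum defining $d$. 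Hence $d(x,y)\ge f(x)-f(y)=(e,x-y)=|x-y|$. (For $\delta=0$ the $\sigma$-term simply drops out and the same bound holds.)

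For the upper bound $d\le C_{\Omega}\,d_{\text{euc}}$ I would argue that every admissible $f$ is globally Lipschitz with a constant independent of $f$. If $f\in D(\mathcal{E})\cap C(\overline{\Omega})$ satisfies $\nu_{\langle f\rangle}\le\mu$, then, reading off the density from the representation in Lemma~\ref{lemmaEM} and using $\mu|_{\Omega}=\alpha\lambda$ together with $\alpha>0$ $\lambda$-a.e., one obtains $|\nabla f|\le 1$ $\lambda$-a.e.\ on $\Omega$, where $\nabla f$ is the weak gradient away from $\{\varrho=0\}$ produced in Lemma~\ref{lemmaEM}. A bounded domain with Lipschitz boundary is quasiconvex, so there is a constant $C_{\Omega}<\infty$, depending only on $\Omega$, such that any two points of $\Omega$ can be joined inside $\Omega$ by a rectifiable curve of length at most $C_{\Omega}|x-y|$; integrating $\nabla f$ along such curves gives $|f(x)-f(y)|\le C_{\Omega}|x-y|$ for $x,y\in\Omega$, and this extends to $\overline{\Omega}$ by continuity of $f$. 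Taking the supremum over admissible $f$ yields $d(x,y)\le C_{\Omega}|x-y|$, and together with the lower bound this proves $d\simeq d_{\text{euc}}$, hence strong regularity.

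The routine parts are the quasiconvexity of a bounded Lipschitz domain and the bookkeeping with Radon--Nikodym derivatives. The point requiring genuine care is the treatment of the degeneracy set $\{\varrho=0\}$: Lemma~\ref{lemmaEM} only supplies the weak gradient of an admissible $f$ off this set, so one must check that it is negligible for the Lipschitz estimate above. Here the continuity of $\alpha$ and $\beta$ (so that $\{\varrho=0\}$ is closed with empty interior, as $\alpha,\beta>0$ a.e.) together with the continuity of $f$ on all of $\overline{\Omega}$, which bridges this set when chaining curves, is what makes the argument go through; this is precisely where the hypotheses on the coefficients (Condition~\ref{condcont}, and, where relevant, the capacity-zero assumption on $\{\varrho=0\}$ of Condition~\ref{condcomp}) enter.
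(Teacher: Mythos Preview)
Your proof is correct and follows essentially the same route as the paper: both establish bi-Lipschitz equivalence of $d$ and $d_{\text{euc}}$ by testing with linear functions for the lower bound (you use a single directional linear function, yielding the sharper $d\ge d_{\text{euc}}$, where the paper uses the coordinate projections $f_i(x)=x_i$) and, for the upper bound, by reading off from Lemma~\ref{lemmaEM} that admissible $f$ satisfy $|\nabla f|\le 1$ a.e.\ and hence are Lipschitz on a Lipschitz domain. The only cosmetic difference is that you invoke quasiconvexity of $\Omega$ explicitly where the paper cites the embedding $H^{1,\infty}(\Omega)\hookrightarrow C^{0,1}(\overline{\Omega})$ from \cite{Alt06}, and you are more explicit than the paper about the role of the degeneracy set $\{\varrho=0\}$.
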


\begin{proof}
We show that the intrinsic metric $d$ is equivalent to the euclidean metric $d_{\text{euc}}$. First, let $f_i(x):=x_i$, $x \in \overline{\Omega}$, for $i=1,\dots,d$. Then $f_i \in \mathcal{D}$ with $\frac{d\nu_{\langle f_i \rangle}}{d\mu} \leq 1$ a.e. and for $x,y \in \overline{\Omega}$ holds (by eventually replacing $f_i$ by $-f_i$)
\[ d(x,y) \geq \max_{i=1,\dots,d} \big(f_i(x)-f_i(y)\big) = \max_{i=1,\dots,d} |x_i-y_i| \geq \tilde{C}_1~ d_{\text{euc}}(x,y) \]
for some constant $\tilde{C}_1=\tilde{C}_1(d)< \infty$.
Moreover, by Lemma \ref{lemmaEM}
\begin{align*} d(x,y) &\leq \sup \{ f(x)-f(y) | \ f \in D(\mathcal{E}) \cap C(\overline{\Omega}) \text{ with } \nu_{\scriptscriptstyle{\langle f\rangle}} \leq \mu \} \\
&\leq \sup \{ f(x)-f(y) | \ f \in H^{1,\infty}(\Omega) \cap C(\overline{\Omega}) \text{ with } |\nabla f| \leq 1 \text{ a.e.} \} 
\end{align*}
and the last expression is locally bounded by $d_{\text{euc}}$. Indeed, by the proof of \cite[Satz 8.5]{Alt06} every $f\in H^{1,\infty}(\Omega)$ has a unique continuous version in $C^{0,1}(\overline{\Omega})$ and there is some constant $\tilde{C}_2=\tilde{C}_2(\Omega) < \infty$ such that 
\[ f(x)-f(y) \leq \tilde{C}_2 ~\Vert \nabla f \Vert_{L^{\infty}(\Omega)} ~d_{\text{euc}}(x,y). \]
\end{proof}


\begin{example} \label{excap}
Assume additionally to Condition \ref{conddensity} and Condition \ref{condhamza} that $\alpha, \beta \in C(\overline{\Omega})$ and the following property:
\begin{align} \label{condsturm} \mu(B_r(\{\varrho=0\})) \leq C ~r^2 \ \text{ as } r \rightarrow 0. \end{align}
Then, as a consequence of strong regularity, $\text{cap}_{\mathcal{E}}(\{ \varrho=0\})=0$ by \cite[Theorem 3]{Stu95} and therefore, Theorem \ref{thmergo} applies.
\end{example}

\section{$\mathcal{L}^p$-strong Feller properties} \label{sectfeller}

The diffusion process constructed in Section \ref{secSDE} has the drawback that the main result given in Theorem \ref{thmsolSDE} only holds for quasi every starting point $x \in \overline{\Omega}$ and it is not explicitly know how this set of admissible starting points looks like. In the following, we prove regularity properties of the associated $L^p$-resolvent and conclude that the results of Theorem \ref{thmsolSDE} even hold for every starting point $x \in \overline{\Omega}_1:=\overline{\Omega} \backslash \{ \varrho =0\}$ under Condition \ref{condcont} and Condition \ref{condcomp}. More precisely,
we show the sufficient conditions given in \cite[Condition 1.3]{BGS13} by generalizing a regularity result from \cite{Nit11} for $\delta=0$ and from \cite{War13} for $\delta=1$. Then, we apply \cite[Theorem 1.4]{BGS13}. This allows to proceed again as in Section \ref{secSDE}, but now without a set of starting points we have to exclude. Note that $\overline{\Omega}_1$ is not closed in $\mathbb{R}^d$ if $\{ \varrho =0\} \neq \emptyset$. We use this notation in order to be consitent with \cite{BGS13}.\\

We denote by $(T_t)_{t \geq 0}$ the strongly continuous contraction semigroup, by $(G_{\lambda})_{\lambda >0}$ the strongly continuous contraction resolvent and by $(L,D(L))$ the generator corresponding to $(\mathcal{E},D(\mathcal{E}))$. By the Beurling-Deny theorem there exists an associated strongly continuous contraction semigroup $(T_t^r)_{t >0}$ on $L^r(\overline{\Omega};\mu)$ with generator $(L_r,D(L_r))$ and resolvent $(G^r_{\lambda})_{\lambda >0}$ for every $1 \leq r < \infty$, see \cite[Proposition 1.8]{LS96} and \cite[Remark 1.3]{LS96}. If $r>1$ then $(T_t^r)_{t >0}$ is the restriction of an analytic semigroup by \cite[Remark 1.2]{LS96}. In this context associated means that for $f \in L^1(\overline{\Omega};\mu) \cap L^{\infty}(\overline{\Omega};\mu)$, it holds that $T_t f =T_t^r f$ for every $t >0$. With this notation we also have $T_t=T_t^2$ for $t \geq 0$, $G_{\lambda}= G_{\lambda}^2$ for $\lambda >0$ and $L_2=L$.\\

Assume that Condition \ref{condcont} is fulfilled. In order to prove the required regularity result we assume additionally the following property:

\begin{condition} \label{condint}
There exists $p \geq 2$ with $p > \frac{d}{2}$ such that
\[ \frac{|\nabla \alpha|}{\alpha} \in L^p_{\text{loc}}(\overline{\Omega} \cap \{ \varrho>0\}; \alpha \lambda) ~ \text{ and additionally } ~ \frac{|\nabla \beta|}{\beta} \in L^p_{\text{loc}}(\Gamma \cap \{ \varrho>0\}; \beta \sigma) \  \text{if } \delta=1 \]
or equivalently
\[ \mathbbm{1}_{\Omega} ~ \frac{|\nabla \alpha|}{\alpha} + \delta~ \mathbbm{1}_{\Gamma} ~ \frac{|\nabla \beta|}{\beta} \in L^p_{\text{loc}}(\overline{\Omega} \cap \{ \varrho>0\}; \mu). \]
\end{condition}

In the following, we assume Condition \ref{condcont}, Condition \ref{condint} and again that
\begin{enumerate}
\item $\text{cap}_{\mathcal{E}}( \{ \varrho =0\} ) =0$ (i.e., Condition \ref{condcomp}),
\end{enumerate}
which is e.g. fulfilled under the condition (\ref{condsturm}) given in Example \ref{excap}, and we prove that
\begin{enumerate}
\item[(ii)] there exists $p >1$ such that $D(L_p) \hookrightarrow C(\overline{\Omega}_1)$ and the embedding is locally continuous, i.e., for $x \in \overline{\Omega}_1$ there exists a $\overline{\Omega}_1$-neighborhood $U$ and a constant $C=C(U)< \infty$ such that
\[ \sup_{y \in U} |\tilde{u}| \leq C \Vert u \Vert_{D(L_p)} \ \text{ for all } u \in D(L_p), \]
where $\tilde{u}$ denotes the continuous version of $u$ (on $\overline{\Omega}_1$),
\item[(iii)] for each point $x \in \overline{\Omega}_1$ exists a sequence of function $(u_n)_{n \in \mathbb{N}}$ in $D(L_p)$ such that for every $y \neq x$, $y \in \overline{\Omega}_1$, exists a $u_n$ with $u_n(y)=0$ and $u_n(x)=1$.
\end{enumerate}
We say that a sequence $(u_n)_{n \in \mathbb{N}}$ as in (iii) is point separating in $x$. \\

Then, as a consequence of \cite[Theorem 1.4]{BGS13}, there exists a diffusion process
\begin{align*}
\mathbf{M}:=\big( \mathbf{\Omega}, \mathcal{F}, (\mathcal{F}_t)_{t \geq 0}, (\mathbf{X}_t)_{t \geq 0}, (\Theta_t)_{t \geq 0}, (\mathbf{P}_x)_{x \in \overline{\Omega}} \big)
\end{align*} with state space $\overline{\Omega}$ which leaves $\overline{\Omega}_1$ $\mathbf{P}_x$-a.s., $x \in \overline{\Omega}_1$, invariant. The Dirichlet form assciated to $\mathbf{M}$ is given by $(\mathcal{E},D(\mathcal{E}))$ and the transition semigroup $(p_t)_{t >0}$ of $\mathbf{M}$ is $\mathcal{L}^p$-strong Feller, i.e., $p_t(\mathcal{L}^p(\overline{\Omega};\mu)) \subset C(\overline{\Omega}_1)$. Moreover, it solves the $(L_p,D(L_p))$ martingale problem for every point $x \in \overline{\Omega}_1$. \\

\begin{lemma} \label{remgenerator}
For $p$ as in Condition \ref{condint} and $f \in C^2(\overline{\Omega})$ holds
\[ L_p f= L_2 f= L f \]
and in this case $L f$ is explicitly given by (\ref{generator}).
\end{lemma}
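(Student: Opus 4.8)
The plan is to deduce the claim from the consistency of the family $(T^r_t)_{t>0}$, $1\le r<\infty$, together with the explicit form of $L=L_2$ on $C^2(\overline\Omega)$ already obtained in Proposition \ref{propgen}. First I would record a simplification: since $\mu$ is finite, $L^p(\overline\Omega;\mu)\subset L^2(\overline\Omega;\mu)$ and $L^p$-convergence implies $L^2$-convergence; because $(T^p_t)_{t>0}$ is the restriction of $(T_t)_{t>0}$ to $L^p(\overline\Omega;\mu)$ (both extend $T_t|_{L^1\cap L^\infty}$, which is sub-Markovian and hence $L^p$-contractive), it follows that $D(L_p)\subset D(L_2)$ and $L_pg=L_2g$ for every $g\in D(L_p)$. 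Hence it suffices to prove $C^2(\overline\Omega)\subset D(L_p)$; the identity $L_pf=L_2f=Lf$ and the explicit expression (\ref{generator}) then follow from Proposition \ref{propgen}.

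To establish $C^2(\overline\Omega)\subset D(L_p)$, I would fix $f\in C^2(\overline\Omega)$ and $\lambda>0$. By Proposition \ref{propgen}, $f\in D(L_2)$, so $f=G^2_\lambda h$ with $h:=\lambda f-L_2f=\lambda f-\tfrac12\,\text{Tr}(A\nabla^2 f)-(b,\nabla f)$ and $A,b$ as in (\ref{defA}), (\ref{defb}). The key point is to check that $h\in L^p(\overline\Omega;\mu)$: the contributions $\lambda f$, $\text{Tr}(A\nabla^2 f)$ and the curvature term $\kappa\,(n,\nabla f)$ are bounded on $\overline\Omega$ (using $f\in C^2(\overline\Omega)$, compactness of $\overline\Omega$, boundedness of $A$, and that $\Gamma$ is $C^2$), hence lie in $L^p(\overline\Omega;\mu)$ as $\mu$ is finite; the only singular pieces, namely $\mathbbm{1}_\Omega(\nabla\ln\alpha,\nabla f)$ and, for $\delta=1$, $\mathbbm{1}_\Gamma(\nabla_\Gamma\ln\beta,\nabla_\Gamma f)$ and $\mathbbm{1}_\Gamma\tfrac\alpha\beta(n,\nabla f)$, are controlled by Condition \ref{condint} together with the boundedness of $\nabla f$, $\nabla_\Gamma f$ and $\alpha$. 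With $h\in L^2(\overline\Omega;\mu)\cap L^p(\overline\Omega;\mu)$ in hand, the consistency of the resolvents gives $G^2_\lambda h=G^p_\lambda h$ $\mu$-a.e.\ (both coincide with $G_\lambda h$ on $L^\infty(\overline\Omega;\mu)$, which is simultaneously dense in $L^2$ and $L^p$; one passes to the limit along a common approximating sequence and then an a.e.-convergent subsequence). Thus $f=G^p_\lambda h$ lies in the range of $G^p_\lambda$, i.e.\ in $D(L_p)$, and $L_pf=\lambda f-h=L_2f$.

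The step I expect to be the main obstacle is the verification that the logarithmic-derivative drift terms are \emph{globally} integrable in $L^p(\overline\Omega;\mu)$, since Condition \ref{condint} only provides integrability on relatively compact subsets of the non-compact set $\overline\Omega\cap\{\varrho>0\}$, so a plain covering of $\overline\Omega\cap\{\varrho>0\}$ does not suffice. The way around this is to exploit that $\mu(\{\varrho=0\})=0$ and that $\overline\Omega_1=\overline\Omega\setminus\{\varrho=0\}$ is left invariant by the process under Condition \ref{condcomp}: it is enough to identify $L_pf$ with the classical expression $\mu$-a.e.\ on an exhaustion of $\overline\Omega_1$ by relatively compact sets, on each of which the local integrability in Condition \ref{condint} is exactly what is needed. (Alternatively, if one only requires the identity on $C^2(\overline\Omega)\cap D(L_p)$, as in the martingale-problem application, the conclusion is already immediate from the first paragraph.) All remaining steps are routine within the Beurling--Deny/\cite{LS96} setup recalled before the lemma.
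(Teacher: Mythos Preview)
Your approach is essentially the paper's: show that for $f\in C^2(\overline\Omega)$ both $f$ and $Lf$ lie in $L^p(\overline\Omega;\mu)$, and then invoke consistency of the $L^r$-semigroups/resolvents to conclude $L_pf=L_2f$. The paper compresses the consistency step into a reference to \cite[Lemma~2.3]{BG12}, whereas you spell it out via the resolvent identity $f=G^2_\lambda h=G^p_\lambda h$; your preliminary observation that $p\ge 2$ with $\mu$ finite already forces $D(L_p)\subset D(L_2)$ and $L_p=L_2$ there is a clean simplification the paper does not make explicit.

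Where you go beyond the paper is in flagging that Condition~\ref{condint} is only \emph{local} on $\{\varrho>0\}$, so the global membership $Lf\in L^p(\overline\Omega;\mu)$ (in particular for the terms $\mathbbm{1}_\Omega(\nabla\ln\alpha,\nabla f)$, $\mathbbm{1}_\Gamma(\nabla_\Gamma\ln\beta,\nabla_\Gamma f)$ and $\mathbbm{1}_\Gamma\tfrac{\alpha}{\beta}(n,\nabla f)$) is not automatic near $\{\varrho=0\}$. The paper simply asserts ``$u$ and $Lu$ are elements of $L^p(\overline\Omega;\mu)$'' without addressing this, so your concern is legitimate and applies to the paper's argument as well. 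Your proposed workaround via an exhaustion of $\overline\Omega_1$ does not by itself yield $f\in D(L_p)$ in the abstract sense (membership in $D(L_p)$ is a global statement, not a local one), so that route would need more work; your alternative, namely that for the martingale-problem application one only needs $L_p=L_2$ on $C^2(\overline\Omega)\cap D(L_p)$, is correct and follows immediately from your first paragraph---but note that Lemma~\ref{lemmaps} and Theorem~\ref{main} do use $C^2(\overline\Omega)\subset D(L_p)$, so the full statement of the lemma is what is actually needed downstream.
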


\begin{proof}
The statement for $p=2$ has been proven in Proposition \ref{propgen}. Then, the general statement follows by the assumptions on $\alpha$ and $\beta$ similar to Lemma 2.3 in \cite{BG12}, since $u$ and $Lu$ are elements of $L^p(\overline{\Omega};\mu)$ for $u \in C^2(\overline{\Omega})$.
\end{proof}

In a similar way as in the case of Neumann boundary conditions (see \cite[Section 4]{BG12}) we get the following:

\begin{theorem} \label{resolventeqthm}
Assume that Condition \ref{condcont} is fulfilled. Let $U$ be an open subset of $\overline{\Omega}$ in the subspace topology. The following holds:
\begin{enumerate} \item $C_c^1(U) \hookrightarrow \mathcal{D} \hookrightarrow D(\mathcal{E}).$
\item Assume additionally that $\overline{U} \subset \overline{\Omega}_1=\overline{\Omega} \backslash \{ \varrho =0\}$. The restriction maps $i_{\Omega}$ amd $i_{\Gamma}$ (supposed that $U \cap \Gamma \neq \emptyset$ and $\delta=1$), which restrict functions from $\overline{\Omega}$ to $U \cap \Omega$ and $U \cap \Gamma$ respectively, are continuous mappings from $D(\mathcal{E})$ to $H^{1,2}(U \cap \Omega)$ and $H^{1,2}(U \cap \Gamma)$ respectively. Moreover, it holds
\begin{align} \label{Eext} \mathcal{E}(u,v)= \frac{1}{2} \int_{U \cap \Omega} (\nabla u, \nabla v) ~\alpha d\lambda + \frac{\delta}{2} \int_{U \cap \Gamma} (\nabla_{\Gamma} u, \nabla_{\Gamma} v)~ \beta d\sigma 
\end{align} 
and there exists a constant $C_2=C_2(\alpha,\beta,d,G) < \infty$ such that 
\begin{align} \label{estimatesob} \Vert u \Vert_{H^{1,2}(U \cap \Omega)}^2 + \delta \Vert u \Vert_{H^{1,2}(U \cap \Gamma)}^2 \leq C_2 \mathcal{E}_1(u,u) 
\end{align}
for $u \in D(\mathcal{E})$ and $v \in C_c^1(U)$.
\item Let $2 \leq p < \infty$, $\gamma >0$. Let $x \in \overline{\Omega}$ and let $U := B_{R}(x)=\{ y \in \overline{\Omega}|~d_{\text{euc}}(x,y)<R\}$ be an open ball around $x$ in $\overline{\Omega}$ with radius $R >0$ such that $\overline{U} \subset \overline{\Omega}_1$. For all $f \in L^p(\overline{\Omega};\mu)$, we have $G_{\gamma}^p f  \in H^{1,2}(U \cap \Omega)$ and $G_{\gamma}^p f  \in H^{1,2}(U \cap \Gamma)$ for $\delta=1$, whenever $U \cap \Gamma$ is non-empty. Moreover, with $u:=G_{\gamma}^p f$ it holds
\begin{align} \label{resolventeq}
\gamma  \int_{U} u v ~d\mu + \frac{1}{2} \int_{U \cap \Omega} (\nabla u, \nabla v)~ \alpha d\lambda + \frac{\delta}{2} \int_{U \cap \Gamma} (\nabla_{\Gamma} u, \nabla_{\Gamma} v)~ \beta d\sigma = \int_{U} f v ~d\mu
\end{align}
for all $v \in C_c^1(U)$.\\
Additionally, for $R_0 >R$ such that $\overline{U_0} \subset \overline{\Omega} \backslash \{ \varrho =0 \}$, where $U_0 := B_{R_0}(x)$, we have the norm inequalities
\begin{align} \label{ineq1} \Vert u \Vert_{H^{1,2}(U \cap \Omega)}^2 + \delta \Vert u \Vert_{H^{1,2}(U \cap \Gamma)}^2 \leq C_3 ( \Vert f \Vert_{L^p(U_0; \lambda + \sigma)} + \Vert u \Vert_{L^p(U_0; \lambda + \sigma)})^2 \end{align}
and 
\begin{align} \label{ineq2} \Vert u \Vert_{H^{1,2}(U \cap \Omega)}^2 + \delta \Vert u \Vert_{H^{1,2}(U \cap \Gamma)}^2  \leq C_4 \Vert f \Vert^2_{L^p(\overline{\Omega};\mu)} \end{align}
\end{enumerate}
with constants $C_3=C_3(\alpha,\beta,R,R_0,d,p) < \infty$ and $C_4 =2~C_3< \infty$.
\end{theorem}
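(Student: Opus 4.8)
The plan is to dispatch (i) and (ii) quickly from what is already available and then to concentrate on (iii). For (i), every $v\in C_c^1(U)$ extended by $0$ is a $C^1$-function on $\overline\Omega$, hence belongs to $\mathcal{D}=C^1(\overline\Omega)\subset D(\mathcal{E})$, and since $\overline\Omega$ is compact its $\mathcal{E}_1$-norm is dominated by its $C^1$-norm, so both inclusions are continuous. For (ii), the continuity of $i_\Omega,i_\Gamma$ and the bound \eqref{estimatesob} are exactly Lemma \ref{lemweakdiff} (with $C_2=C_1^2$). To obtain \eqref{Eext} I would take $u_k\in\mathcal{D}$ with $u_k\to u$ in $\mathcal{E}_1$; then $\mathcal{E}(u,v)=\lim_k\mathcal{E}(u_k,v)$, and since $\operatorname{supp}v\subset U$ with $\overline U\subset\{\varrho>0\}$ the weighted and unweighted $L^2$-norms on $U$ are equivalent, so by Lemma \ref{lemweakdiff} one has $\nabla u_k\to\nabla u$ in $L^2(U\cap\Omega;\lambda)$ and $\nabla_\Gamma u_k\to\nabla_\Gamma u$ in $L^2(U\cap\Gamma;\sigma)$; passing to the limit in $\mathcal{E}(u_k,v)=\frac{1}{2}\int_{U\cap\Omega}(\nabla u_k,\nabla v)\alpha\,d\lambda+\frac{\delta}{2}\int_{U\cap\Gamma}(\nabla_\Gamma u_k,\nabla_\Gamma v)\beta\,d\sigma$ — the gradients being paired against the fixed, boundedly supported functions $\alpha\nabla v$ and $\beta\nabla_\Gamma v$ — then yields \eqref{Eext}.

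For (iii), since $\mu(\overline\Omega)<\infty$ one has $L^p(\overline\Omega;\mu)\hookrightarrow L^2(\overline\Omega;\mu)$, and by consistency of the resolvents $u:=G_\gamma^p f=G_\gamma f\in D(L)\subset D(\mathcal{E})$; as $\overline U\subset\overline\Omega_1$, part (ii) then gives $u|_{U\cap\Omega}\in H^{1,2}(U\cap\Omega)$ and, if $\delta=1$ and $U\cap\Gamma\neq\emptyset$, $u|_{U\cap\Gamma}\in H^{1,2}(U\cap\Gamma)$. From $(\gamma-L)u=f$ and the identity $\mathcal{E}(u,w)=(-Lu,w)_{L^2(\overline\Omega;\mu)}$, valid for $w\in D(\mathcal{E})$, I obtain the weak resolvent identity
\[ \gamma\,(u,w)_{L^2(\overline\Omega;\mu)}+\mathcal{E}(u,w)=(f,w)_{L^2(\overline\Omega;\mu)},\qquad w\in D(\mathcal{E}); \]
specializing $w=v\in C_c^1(U)\subset D(\mathcal{E})$ and inserting \eqref{Eext} gives \eqref{resolventeq}.

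To prove the a priori bounds I would fix a cut-off $\eta\in C^\infty(\overline\Omega)$ with $0\le\eta\le1$, $\eta\equiv1$ on $U=B_R(x)$, $\operatorname{supp}\eta\subset B_{(R+R_0)/2}(x)$ and $\overline{B_{(R+R_0)/2}(x)}\subset U_0$, so that $|\nabla\eta|\le C(R_0-R)^{-1}$. Testing the weak resolvent identity against $\eta^2u_k\in\mathcal{D}$ with $u_k\to u$ in $\mathcal{E}_1$ and letting $k\to\infty$ — using on $\overline{U_0}$ the equivalence of the $\mu$- and $(\lambda+\sigma)$-norms and the convergences of part (ii) — produces
\begin{align*}
\gamma\int\eta^2u^2\,d\mu&+\frac{1}{2}\int_{U_0\cap\Omega}\bigl(\eta^2|\nabla u|^2+2\eta u(\nabla u,\nabla\eta)\bigr)\alpha\,d\lambda\\
&+\frac{\delta}{2}\int_{U_0\cap\Gamma}\bigl(\eta^2|\nabla_\Gamma u|^2+2\eta u(\nabla_\Gamma u,\nabla_\Gamma\eta)\bigr)\beta\,d\sigma=\int\eta^2uf\,d\mu.
\end{align*}
Moving the cross terms to the right, absorbing half of each gradient term on the left by Young's inequality $2|\eta u\,(\nabla u,\nabla\eta)|\le\frac{1}{2}\eta^2|\nabla u|^2+2u^2|\nabla\eta|^2$ (and its boundary analogue), and using $0<\varrho^-\le\varrho\le\varrho^+<\infty$ on $\overline{U_0}$ reduces this to $\int_{U_0\cap\Omega}\eta^2|\nabla u|^2\,d\lambda+\delta\int_{U_0\cap\Gamma}\eta^2|\nabla_\Gamma u|^2\,d\sigma\le C\int_{U_0}(|u|\,|f|+u^2)\,d(\lambda+\sigma)$. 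Since $p\ge2$ and $U_0$ has finite measure, Hölder's inequality bounds the right-hand side by $C'(\Vert f\Vert_{L^p(U_0;\lambda+\sigma)}+\Vert u\Vert_{L^p(U_0;\lambda+\sigma)})^2$, and adding the ($L^p$-dominated) $L^2$-norm of $u$ over $U$ and using $\eta\equiv1$ on $U$ gives \eqref{ineq1}. Finally \eqref{ineq2} follows because $\Vert g\Vert_{L^p(U_0;\lambda+\sigma)}\le(\varrho^-)^{-1/p}\Vert g\Vert_{L^p(\overline\Omega;\mu)}$ on $\overline{U_0}\subset\{\varrho>0\}$ and $\Vert u\Vert_{L^p(\overline\Omega;\mu)}=\Vert G_\gamma^pf\Vert_{L^p(\overline\Omega;\mu)}\le\gamma^{-1}\Vert f\Vert_{L^p(\overline\Omega;\mu)}$ by the contraction property of $G_\gamma^p$, absorbing the remaining factors into $C_3$ to arrive at $C_4=2C_3$. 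The case $\delta=0$ is identical with every integral over $\Gamma$ omitted.

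I expect the one genuinely delicate step to be the Caccioppoli-type estimate \eqref{ineq1}: it requires justifying $\eta^2u$ as a test function — which works precisely because $\operatorname{supp}\eta$ avoids $\{\varrho=0\}$, where $\alpha,\beta$ are bounded away from $0$ and $\infty$, so the approximation $\eta^2u_k\in\mathcal{D}$ converges in the relevant topologies — and then carrying out the Young-inequality absorption simultaneously for the interior term and the Laplace--Beltrami boundary term. This is the boundary-operator analogue of the interior regularity estimate for Neumann conditions in \cite[Section 4]{BG12}, respectively of the regularity results of \cite{Nit11} ($\delta=0$) and \cite{War13} ($\delta=1$).
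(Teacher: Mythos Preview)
Your proof is correct and follows the paper's overall strategy: parts (i) and (ii) via Lemma~\ref{lemweakdiff} and density of $\mathcal{D}$, part (iii) via a Caccioppoli-type cut-off argument with Young's inequality. The one genuine difference is a simplification on your side: you observe at the outset that $\mu(\overline\Omega)<\infty$ and $p\ge 2$ give $L^p(\overline\Omega;\mu)\hookrightarrow L^2(\overline\Omega;\mu)$, so by consistency $G_\gamma^p f=G_\gamma^2 f\in D(L)\subset D(\mathcal{E})$ for \emph{every} $f\in L^p$. This lets you derive \eqref{resolventeq} directly from the $L^2$-resolvent identity and dispenses with the paper's two-step procedure (first prove \eqref{ineq1} for $p=2$, then pass to general $p$ via the dense subspace $W=L^1\cap L^\infty$ and the identity $G_\gamma^p|_W=G_\gamma^2|_W$). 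Your route is shorter; the paper's route has the advantage of making explicit that the a~priori estimate and its constants are obtained first in $L^2$ and only then transported to $L^p$ by H\"older, which is closer in spirit to the Neumann-case argument of \cite[Section~4]{BG12} it is modelled on. For the energy estimate itself the two computations are equivalent: the paper expands $\mathcal{E}_\gamma(\eta u,\eta u)$ and rewrites it as $\mathcal{E}_\gamma(u,\eta^2 u)$ plus cross terms, whereas you test the resolvent identity directly against $\eta^2 u$ (justified via $\eta^2 u_k\in C_c^1(U_0)$ and part (ii)); both lead to the same absorption via Young's inequality.
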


\begin{proof}
(i) is clear. The first part of (ii)  and inequality (\ref{estimatesob}) hold by Lemma \ref{lemweakdiff} (the result for $\delta=0$ holds similarly). 
\[ \mathcal{E}(u,v)= \frac{1}{2} \int_{U \cap \Omega} (\nabla u, \nabla v) ~\alpha d\lambda + \frac{\delta}{2} \int_{U \cap \Gamma} (\nabla_{\Gamma} u, \nabla_{\Gamma} v)~ \beta d\sigma \]
is evident for $u \in \mathcal{D}$ and $v \in C_c^1(U) \subset \mathcal{D}=C^1(\overline{\Omega})$. Fix $v \in C_c^1(U)$. Then $\mathcal{E}(\cdot,v)$ is a continuous linear functional on $D(\mathcal{E})$ with respect to the $\mathcal{E}_1^{\frac{1}{2}}$-norm. Moreover,  
\[ F(u):= \frac{1}{2} \int_{U \cap \Omega} (\nabla u, \nabla v) ~\alpha d\lambda + \frac{\delta}{2} \int_{U \cap \Gamma} (\nabla_{\Gamma} u, \nabla_{\Gamma} v)~ \beta d\sigma \]
is continuous on $D(\mathcal{E})$ (or rather on the space obtained by restricting functions to $U$) with respect to the norm given by $\Vert u \Vert_{H^{1,2}(U \cap \Omega)}^2 + \delta \Vert u \Vert_{H^{1,2}(U \cap \Gamma)}^2$, since $\alpha$ and $\beta$ are bounded from above and from below away from zero on $U$ (by continuity). Thus, it is also continuous with respect to the $\mathcal{E}_1^{\frac{1}{2}}$-norm in view of (\ref{estimatesob}) and therefore, $F$ has to coincide with $\mathcal{E}(\cdot,v)$ by uniqueness, since the equality holds on the dense subset $\mathcal{D}$. Therefore, (\ref{Eext}) is established.\\
Next, we prove (iii). Let $R$ and $R_0$ be as stated. First, we show (\ref{ineq1}) for $p=2$. Choose a cutoff function $\eta$ which is constantly one in $B_{R^{\prime}}(x)$ for some $R_0 > R^{\prime} > R$ and has compact support in $B_{R_0}(x)$. For $f \in L^2(\overline{\Omega};\mu)$ we have $u:=G_{\gamma}^2 f \in D(\mathcal{E})$ and it is easy to see that also $\eta u \in D(\mathcal{E})$, since $\eta u_n$ converges to $\eta u$ as $n \rightarrow \infty$ if $(u_n)_{n \in \mathbb{N}}$ approximates $u$ in $D(\mathcal{E})$. As in (ii) it can be shown that for fixed $v \in D(\mathcal{E})$ holds
\[ \mathcal{E}(v, \eta u)=\frac{1}{2} \int_{U_0 \cap \Omega} (\nabla v, \nabla (\eta u)) ~\alpha d\lambda + \frac{\delta}{2} \int_{U_0 \cap \Gamma} (\nabla_{\Gamma} v, \nabla_{\Gamma} (\eta u)) ~\beta d\sigma. \]
Note that $\eta^2$ is again a cutoff function with the properties we supposed for $\eta$. We have by calculation
\begin{align}
\mathcal{E}_{\gamma}(\eta u, \eta u)=&\gamma \int_{U_0} (\eta u)^2 d\mu + \frac{1}{2} \int_{U_0 \cap \Omega} (\nabla (\eta u), \nabla (\eta u)) ~\alpha d\lambda + \frac{\delta}{2} \int_{U_0 \cap \Gamma} (\nabla_{\Gamma} (\eta u), \nabla_{\Gamma} (\eta u)) ~\beta d\sigma \notag \\
=& \mathcal{E}_{\gamma}(u, \eta^2 u) - \frac{1}{2} \int_{U_0 \cap \Omega} \eta u (\nabla  u, \nabla \eta) ~\alpha d\lambda - \frac{\delta}{2} \int_{U_0 \cap \Gamma} \eta u (\nabla_{\Gamma}  u, \nabla_{\Gamma} \eta) ~\beta d\sigma  \notag \\
&+ \frac{1}{2} \int_{U_0 \cap \Omega}  u (\nabla  \eta, \nabla (\eta u)) ~\alpha d\lambda + \frac{\delta}{2} \int_{U_0 \cap \Gamma} u (\nabla_{\Gamma}  \eta, \nabla_{\Gamma} (\eta u)) ~\beta d\sigma \notag \\
=& \int_{U_0} f \eta^2 u~ d\mu - \frac{1}{2} \int_{U_0 \cap \Omega} \eta u (\nabla  u, \nabla \eta) ~\alpha d\lambda - \frac{\delta}{2} \int_{U_0 \cap \Gamma} \eta u (\nabla_{\Gamma}  u, \nabla_{\Gamma} \eta) ~\beta d\sigma \notag \\
&+ \frac{1}{2} \int_{U_0 \cap \Omega}  u (\nabla  \eta, \nabla (\eta u)) ~\alpha d\lambda + \frac{\delta}{2} \int_{U_0 \cap \Gamma} u (\nabla_{\Gamma}  \eta, \nabla_{\Gamma} (\eta u)) ~\beta d\sigma. \label{reform}
\end{align}
We get with the inequality $ a b \leq \frac{\varepsilon}{2} b^2+ \frac{1}{2 \varepsilon} a^2$ for $\varepsilon >0$, $a,b \geq 0$:
\begin{align*}
\left\arrowvert \int_{U_0 \cap \Omega} \eta u (\nabla  u, \nabla \eta) ~\alpha d\lambda \right\arrowvert &\leq K_1 \int_{U_0 \cap \Omega}  |\eta \nabla  u| |\nabla \eta | |u| ~ d\lambda \\
&\leq K_2 \int_{U_0 \cap \Omega}  |\eta \nabla  u| |u| ~  d\lambda \\
&\leq K_2 \big( \int_{U_0 \cap \Omega}  |\nabla (\eta u)| |u| ~ d\lambda + \int_{U_0 \cap \Omega}  |u| | \nabla  \eta| |u| ~ d\lambda \big) \\ 
&\leq K_3 \big( \Vert \nabla (\eta u) \Vert_{L^2(U_0 \cap \Omega; \lambda)}~ \Vert u \Vert_{L^2(U_0 \cap \Omega; \lambda)} + \Vert u \Vert_{L^2(U_0 \cap \Omega; \lambda)}^2 \big) \\
&\leq \frac{\varepsilon}{2} \Vert \eta u \Vert_{H^{1,2}(U_0 \cap \Omega)}^2 + (\frac{K_3^2}{2 \varepsilon} + K_3) \Vert u \Vert_{L^2(U_0 \cap \Omega; \lambda)}^2
\end{align*}
for suitable constants $K_1 \leq K_2 \leq K_3 < \infty$.
Similarly, we get (by eventually increasing $K_3$) that
\begin{align*}
\left\arrowvert \int_{U_0 \cap \Omega}  u (\nabla  \eta, \nabla (\eta u)) ~\alpha d\lambda \right\arrowvert \leq \frac{\varepsilon}{2} \Vert \eta u \Vert_{H^{1,2}(U_0 \cap \Omega)}^2 + (\frac{K_3^2}{2 \varepsilon} + K_3) \Vert u \Vert_{L^2(U_0 \cap \Omega; \lambda)}^2.
\end{align*}
For the two corresponding terms in (\ref{reform}) on $U_0 \cap \Gamma$, the similar statement follows by the same arguments. Moreover, we have
\begin{align*}
 \left\arrowvert \int_{U_0} f \eta^2 u~ d\mu \right\arrowvert &\leq \Vert f \Vert_{L^2(U_0;\mu)} \Vert \eta u \Vert_{L^2(U_0;\mu)} \\
&\leq \frac{1}{2} \big( \Vert f \Vert_{L^2(U_0;\mu)}^2 + \Vert \eta u \Vert_{L^2(U_0;\mu)}^2 \big) \\
&\leq K_4 \big( \Vert f \Vert_{L^2(U_0;\lambda + \sigma)}^2   + \Vert \eta u \Vert_{L^2(U_0;\lambda + \sigma)}^2 \big)
\end{align*}
for a constant $K_4 < \infty$.
Together with (\ref{estimatesob}) and (\ref{reform}) follows that there exists a constant $K_5 < \infty$ such that 
\begin{align*}
&\Vert \eta u \Vert_{H^{1,2}(U_0 \cap \Omega)}^2 +\delta \Vert \eta u \Vert_{H^{1,2}(U_0 \cap \Gamma)}^2 \\
&\leq  K_5 \big( \Vert f \Vert_{L^2(U_0;\lambda + \sigma)}^2 + (1 + \frac{1}{\varepsilon}) \Vert u \Vert_{L^2(U_0;\lambda + \sigma)}^2 + \varepsilon (\Vert \eta u \Vert_{H^{1,2}(U_0 \cap \Omega)}^2 + \delta \Vert \eta u \Vert_{H^{1,2}(U_0 \cap \Gamma)}^2) \big)
\end{align*}
Choosing $\varepsilon=\frac{1}{2K_5}$ yields a constant $K_6 < \infty$ such that
\begin{align*}
\Vert u \Vert_{H^{1,2}(U \cap \Omega)}^2 + \delta  \Vert u \Vert_{H^{1,2}(U \cap \Gamma)}^2 
\leq  K_6 \big(  \Vert f \Vert_{L^2(U_0;\lambda + \sigma)}^2 + \Vert u \Vert_{L^2(U_0;\lambda + \sigma)}^2 \big).
\end{align*}
For arbitrary $p \geq 2$ note that for $W:=L^1(\overline{\Omega};\mu) \cap L^{\infty}(\overline{\Omega};\mu) \subset L^2 (\overline{\Omega};\mu) \cap L^p(\overline{\Omega};\mu)$, $W$ is dense in $L^p(\overline{\Omega};\mu)$ and $G_{\gamma}^p f= G_{\gamma}^2 f$ for $f \in W$. For $f \in W$ inequality (\ref{ineq1}) applies, since the $L^2$-norm on $U_0$ can be estimated by the $L^p$-norm. Then (\ref{ineq1}) holds also for each $f \in L^p(\overline{\Omega};\mu)$ by a density argument and continuity of $G_{\gamma}^p$. (\ref{ineq2}) is a direct consequence of (\ref{ineq1}) and the fact that $G_{\gamma}^p$ is a contraction. \\
It rests to prove (\ref{resolventeq}). For $f \in W$ and $v \in C_c^1(U)$ holds $\mathcal{E}_{\gamma}(G_{\gamma}^2 f, v)=(f,v)_{L^2(\overline{\Omega};\mu)}$, i.e.,
\[ \gamma  \int_{U} G_{\gamma}^2 f v ~d\mu + \frac{1}{2} \int_{U \cap \Omega} (\nabla G_{\gamma}^2 f, \nabla v)~ \alpha d\lambda + \frac{\delta}{2} \int_{U \cap \Gamma} (\nabla_{\Gamma} G_{\gamma}^2 f, \nabla_{\Gamma} v)~ \beta d\sigma = \int_{U} f v ~d\mu \]
by (ii). Fix $v \in C_c^1(U)$ and let $f \in L^p(\overline{\Omega};\mu)$. Then we can approximate $f$ in $L^p(\overline{\Omega};\mu)$ by functions from $W$ due to density. Using (\ref{ineq2}) and continuity of the considered functionals, this proves (\ref{resolventeq}).
\end{proof}

\begin{corollary} \label{cororesolventeq}
Assume that Condition \ref{condcont} is fulfilled. Let $2 \leq p < \infty$, $\gamma >0$. Furthermore, let $x \in \overline{\Omega}$ and $U := B_{R}(x)=\{ y \in \overline{\Omega}|~d_{\text{euc}}(x,y)<R\}$ be an open ball around $x$ in $\overline{\Omega}$ with radius $R >0$ such that $\overline{U} \subset \overline{\Omega}_1$. For $u:= G_{\gamma}^p f$ holds
\begin{align} \label{resolventeq2}
\gamma  \int_{U} u v ~d\mu + \frac{1}{2} \int_{U \cap \Omega} (\nabla u, \nabla v)~ \alpha d\lambda + \frac{\delta}{2} \int_{U \cap \Gamma} (\nabla_{\Gamma} u, \nabla_{\Gamma} v)~ \beta d\sigma = \int_{U} f v ~d\mu
\end{align}
for all $v \in \mathcal{K}$, where $\mathcal{K}$ is defined as the closure of $C_c^1(U)$ with respect to the norm given by 
\[ \Vert \cdot \Vert_{\mathcal{K}}^2 := \Vert \cdot \Vert_{H^{1,2}(U \cap \Omega)}^2 + \delta \Vert \cdot \Vert_{H^{1,2}(U \cap \Gamma)}^2. \]
\end{corollary}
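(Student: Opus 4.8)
The statement is a density argument: by definition $\mathcal{K}$ is the closure of $C_c^1(U)$ with respect to $\Vert \cdot \Vert_{\mathcal{K}}$, and Theorem \ref{resolventeqthm}(iii) already establishes (\ref{resolventeq2}) for every $v \in C_c^1(U)$ (this is (\ref{resolventeq}) applied to the ball $U$). Hence it suffices to verify that, with $u := G_\gamma^p f$ fixed, both sides of (\ref{resolventeq2}) are continuous linear functionals of $v$ with respect to $\Vert \cdot \Vert_{\mathcal{K}}$; the identity then extends from $C_c^1(U)$ to its closure $\mathcal{K}$ by continuity.

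For the estimates I would first collect the available bounds. By Theorem \ref{resolventeqthm}(iii) we have $u \in H^{1,2}(U \cap \Omega)$, and $u \in H^{1,2}(U \cap \Gamma)$ when $\delta=1$ and $U \cap \Gamma \neq \emptyset$. Since $\overline{U} \subset \overline{\Omega}_1 = \{\varrho > 0\}$ is compact and $\alpha, \beta \in C(\overline{\Omega})$, there are constants $0 < \varrho^- \leq \varrho^+ < \infty$ with $\varrho^- \leq \alpha, \beta \leq \varrho^+$ on $\overline{U}$; in particular $\mu$ and $\lambda + \sigma$ are comparable on $U$, $\mu(U) < \infty$, and — as $p \geq 2$ — the inclusion $L^p(U;\mu) \hookrightarrow L^2(U;\mu)$ is continuous. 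Moreover, by the very definition of $\Vert \cdot \Vert_{\mathcal{K}}$, the quantities $\Vert v \Vert_{L^2(U \cap \Omega;\lambda)}$, $\Vert \nabla v \Vert_{L^2(U \cap \Omega;\lambda)}$ and (for $\delta=1$) $\Vert v \Vert_{L^2(U \cap \Gamma;\sigma)}$, $\Vert \nabla_{\Gamma} v \Vert_{L^2(U \cap \Gamma;\sigma)}$ are all dominated by $\Vert v \Vert_{\mathcal{K}}$, whence also $\Vert v \Vert_{L^2(U;\mu)} \leq \sqrt{\varrho^+}\,\Vert v \Vert_{\mathcal{K}}$. Then Cauchy--Schwarz gives, term by term,
\[ \Big\vert \frac{1}{2} \int_{U \cap \Omega} (\nabla u, \nabla v)\,\alpha\,d\lambda \Big\vert \leq \frac{\varrho^+}{2}\,\Vert \nabla u \Vert_{L^2(U \cap \Omega;\lambda)}\,\Vert v \Vert_{\mathcal{K}}, \]
the analogous bound for the $\Gamma$-integral, $\vert \gamma \int_U u v\,d\mu \vert \leq \gamma\,\Vert u \Vert_{L^2(U;\mu)}\,\Vert v \Vert_{L^2(U;\mu)}$, and $\vert \int_U f v\,d\mu \vert \leq \Vert f \Vert_{L^2(U;\mu)}\,\Vert v \Vert_{L^2(U;\mu)}$; each right-hand side is a finite constant times $\Vert v \Vert_{\mathcal{K}}$, using $u \in H^{1,2} \cap L^2$ and $f \in L^p(U;\mu) \subset L^2(U;\mu)$. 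This proves the asserted continuity, and the corollary follows.

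The only step needing any care — and it is mild — is the pairing $\int_U f v\,d\mu$, where $f$ lies merely in $L^p(\overline{\Omega};\mu)$ and $v$ merely in $\mathcal{K}$: one must use the finiteness of $\mu(U)$ together with the two-sided bound on $\varrho$ over $\overline{U} \subset \overline{\Omega}_1$ to bring both factors into the common space $L^2(U;\mu)$. Everything else is Cauchy--Schwarz and the definition of $\mathcal{K}$.
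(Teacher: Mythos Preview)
Your proposal is correct and follows exactly the paper's approach: the paper's own proof is a single sentence stating that both sides of (\ref{resolventeq2}) define continuous linear functionals on $\mathcal{K}$, so that the identity extends from $C_c^1(U)$ to $\mathcal{K}$ by density via (\ref{resolventeq}). You have simply supplied the details of that continuity check.

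One small wrinkle worth noting: in the case $\delta=0$ with $U \cap \Gamma \neq \emptyset$, your estimate $\Vert v \Vert_{L^2(U;\mu)} \leq \sqrt{\varrho^+}\,\Vert v \Vert_{\mathcal{K}}$ does not follow \emph{directly} from the definition of $\Vert\cdot\Vert_{\mathcal{K}}$, since then $\Vert\cdot\Vert_{\mathcal{K}}=\Vert\cdot\Vert_{H^{1,2}(U\cap\Omega)}$ contains no explicit boundary term while $\mu$ still charges $U\cap\Gamma$. One needs the continuity of the trace $H^{1,2}(U\cap\Omega)\to L^2(U\cap\Gamma;\sigma)$ (standard for $C^2$ boundary) to control $\int_{U\cap\Gamma} v^2\,\beta\,d\sigma$. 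The paper glosses over this too, and it is routine, but the constant in that inequality will not be just $\sqrt{\varrho^+}$.
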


\begin{proof}
We fix $f \in L^p(\overline{\Omega};\mu)$ and $u=G_{\gamma}^p f$. Then, (\ref{resolventeq2}) yields continuous linear functionals on $\mathcal{K}$ and therefore, the assertion holds by density and (\ref{resolventeq}).
\end{proof}

\begin{remark}
We want to deduce from (\ref{resolventeq2}) that $G_{\gamma}^p f$ is continuous on $\overline{\Omega}_1$ for $p > \frac{d}{2}$. Note that for interior points $x \in \Omega \backslash \{ \varrho=0\}$ it is possible to choose $R$ small enough such that $B_R(x) \cap \Gamma = \emptyset$. In this case, (\ref{resolventeq2}) reduces to
\begin{align} \gamma  \int_{U} u v ~\alpha d\lambda + \frac{1}{2} \int_{U} (\nabla u, \nabla v)~ \alpha d\lambda  = \int_{U} f v ~\alpha d\mu \end{align}
for all $v \in H^{1,2}_0(U)$ with $f \in L^p(G;\lambda)$, $p > \frac{d}{2}$, i.e., this is the weak formulation of an elliptic PDE on $G$ with Dirichlet boundary conditions. Then it is well-known by the theory of DeGiorgi-Nash-Moser that $u$ is H{\"o}lder continuous near $x$ (see e.g. \cite{GT77}, \cite{Sta63} or \cite{HL97}). Thus, $x \in \Gamma \backslash \{ \varrho=0\}$ is the case of main interest.
\end{remark}

By Corollary \ref{cororesolventeq} $u:=G_{\gamma}^p f$ solves the equation (\ref{resolventeq2}). Therefore, the following theorem holds by an easy generalization of \cite{Nit11} for $\delta=0$ and \cite[Theorem 3.2]{War13} for $\delta=1$:

\begin{theorem} \label{thmcont}
Assume that Condition \ref{condcont} is fulfilled. Let $p > \frac{d}{2}$, $p \geq 2$, $\gamma >0$ and $f \in L^p(\overline{\Omega};\mu)$. Then $u:=G_{\gamma}^p f \in C(\overline{\Omega}_1)$ and for every $x \in \overline{\Omega}_1$ exists a neighborhood $U$ with $\overline{U} \subset  \overline{\Omega}_1$ and a constant $C_4=C_4(U,\alpha,\beta,d,p,\gamma)< \infty$ such that
\[ \sup_{y \in U} | \tilde{u}(y) | \leq C_4 \Vert f \Vert_{L^p(\overline{\Omega};\mu)}, \]
where $\tilde{u}$ denotes the continuous version of $u$ on $\overline{\Omega}_1$.
\end{theorem}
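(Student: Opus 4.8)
The plan is to read the resolvent equation (\ref{resolventeq2}) as the weak form of a divergence-type elliptic equation, to localize around an arbitrary point $x\in\overline{\Omega}_1$, and to invoke interior elliptic regularity when $x\in\Omega$ and Venttsel'-type boundary regularity when $x\in\Gamma$. The starting integrability is furnished by Theorem \ref{resolventeqthm}(iii): $u=G_\gamma^p f$ lies in $H^{1,2}(U\cap\Omega)$ and, for $\delta=1$, in $H^{1,2}(U\cap\Gamma)$, and it satisfies (\ref{resolventeq2}) for all test functions in $\mathcal{K}$ by Corollary \ref{cororesolventeq}. Note that for any ball $U=B_R(x)$ with $\overline U\subset\overline{\Omega}_1$ the continuous functions $\alpha,\beta$ are, by compactness, bounded above and bounded below by positive constants on $\overline{U\cap\Omega}$ and on $\overline{U\cap\Gamma}$; thus on $U$ the problem is uniformly elliptic with bounded measurable coefficients. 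For an interior point $x\in\Omega\setminus\{\varrho=0\}$ one may choose $R$ so small that $B_R(x)\subset\Omega$, whence (\ref{resolventeq2}) reduces to
\[ \gamma\int_U uv\,\alpha\,d\lambda+\tfrac12\int_U(\nabla u,\nabla v)\,\alpha\,d\lambda=\int_U fv\,\alpha\,d\lambda,\qquad v\in H^{1,2}_0(U), \]
the weak formulation of a uniformly elliptic divergence-form equation with right-hand side in $L^p(U;\lambda)$, $p>d/2$. By the De Giorgi--Nash--Moser theory (\cite{GT77}, \cite{Sta63}, \cite{HL97}), $u$ has a locally Hölder continuous version near $x$ together with a bound $\sup_{B_{R/2}(x)}|\tilde u|\le C(\|f\|_{L^p(U;\lambda)}+\|u\|_{L^p(U;\lambda)})$.

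For a boundary point $x\in\Gamma\setminus\{\varrho=0\}$ I would fix $U=B_R(x)$ and $U_0=B_{R_0}(x)$ with $R<R_0$ and $\overline{U_0}\subset\overline{\Omega}_1$, and flatten $\Gamma$ near $x$ via the $C^2$-chart of Definition \ref{defbdry}: after the orthogonal change of variables and the $C^2$-diffeomorphism $(y',y_d)\mapsto(y',y_d-\varphi(y'))$ the interior integral in (\ref{resolventeq2}) becomes a uniformly elliptic divergence-form bilinear form on a half-ball, with bounded measurable (Jacobian) coefficients, and, since $\varphi\in C^2$, the boundary integral $\tfrac12\int_{U\cap\Gamma}(\nabla_\Gamma u,\nabla_\Gamma v)\,\beta\,d\sigma$ transforms into $\tfrac12\int_{B^0}\beta\sqrt g\,g^{ij}\,\partial_i u\,\partial_j v$ over the flat piece $B^0\subset\{y_d=0\}$, with coefficients $g^{ij},\sqrt g\in C^1$ that are bounded and elliptic on the relevant set. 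Starting from $u\in H^{1,2}(U\cap\Omega)\cap H^{1,2}(U\cap\Gamma)$, the transformed equation is exactly of the type handled by the Moser iteration for Venttsel' problems in \cite{Nit11} (case $\delta=0$, where the boundary term is of zeroth order only) and in \cite[Theorem 3.2]{War13} (case $\delta=1$); the only additional ingredients are the space-dependent, bounded, strictly positive weights $\alpha,\beta$ and the bounded first-order term coming from $\Delta_\Gamma$ (the $-\kappa n$ part), all of which are harmless for the iteration. This yields a continuous version of $u$ on a smaller $\overline{\Omega}_1$-neighbourhood $U'$ of $x$ and an estimate $\sup_{U'}|\tilde u|\le C(\|f\|_{L^p(U_0;\lambda+\sigma)}+\|u\|_{L^p(U_0;\lambda+\sigma)})$ with $C=C(U,\alpha,\beta,d,p,\gamma)<\infty$.

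To conclude, patching the local Hölder continuous versions from the two cases over the open cover of $\overline{\Omega}_1$ gives $u\in C(\overline{\Omega}_1)$. To replace the $\|u\|_{L^p}$-term by $\|f\|_{L^p(\overline{\Omega};\mu)}$ I would use that on $\overline{U_0}\subset\overline{\Omega}_1$ the measures $\lambda+\sigma$ and $\mu=\varrho(\lambda+\sigma)$ are comparable (Remark \ref{reminv}(iv)), so $\|u\|_{L^p(U_0;\lambda+\sigma)}\le C\,\|u\|_{L^p(\overline{\Omega};\mu)}\le C\gamma^{-1}\|f\|_{L^p(\overline{\Omega};\mu)}$ because $G_\gamma^p$ is a contraction, and likewise $\|f\|_{L^p(U_0;\lambda+\sigma)}\le C\|f\|_{L^p(\overline{\Omega};\mu)}$ (in the interior case one argues the same way with $\lambda$ in place of $\lambda+\sigma$). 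Substituting into the two local estimates yields the claimed bound with a new constant $C_4=C_4(U,\alpha,\beta,d,p,\gamma)$; alternatively one may feed the global inequality (\ref{ineq2}) of Theorem \ref{resolventeqthm}(iii) directly into the Moser iteration to control the $H^{1,2}$-norms, obtaining the bound with a possibly different dependence of $C_4$.

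I expect the main obstacle to be the Moser iteration for the coupled bulk--boundary (Venttsel') problem underlying the boundary case. Since $\sigma$ is $(d-1)$-dimensional while $\lambda$ is $d$-dimensional, one has to dovetail the interior Sobolev embedding, the trace inequality $H^{1,2}(U\cap\Omega)\hookrightarrow L^q(U\cap\Gamma;\sigma)$ and the Sobolev embedding of $H^{1,2}(U\cap\Gamma)$ in order to close the iteration and to verify that $p>d/2$ is the correct threshold for both the volume term $f\,\alpha\lambda$ and the surface term $f\,\beta\sigma$ on the right-hand side. This is the technical heart of \cite{Nit11} and \cite{War13}; what remains is the routine check, using Condition \ref{condcont} and the boundedness of $\alpha,\beta$ and $\kappa$ on $\overline U$, that their hypotheses are met after the flattening and localization performed above.
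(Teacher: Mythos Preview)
Your proposal is correct and follows essentially the same route as the paper. The paper's own argument is very brief: it simply observes that $u=G_\gamma^p f$ satisfies the weak equation (\ref{resolventeq2}) by Corollary \ref{cororesolventeq}, and then invokes the regularity results of \cite{Nit11} (for $\delta=0$) and \cite[Theorem 3.2]{War13} (for $\delta=1$), noting in the subsequent remark that the extension from constant $\alpha,\beta$ to continuous ones is harmless because these densities are locally bounded above and below on $\overline{\Omega}_1$. Your write-up spells out in more detail the interior case (which the paper dispatches in the Remark preceding the theorem), the flattening step, and the passage from the local estimate $\sup|\tilde u|\le C(\|f\|_{L^p(U_0)}+\|u\|_{L^p(U_0)})$ to the global bound via the contraction property of $G_\gamma^p$---none of which the paper makes explicit---but the underlying strategy is identical.
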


\begin{remark}
The regularity results in \cite{Nit11} and \cite{War13} (see also \cite{War12}) correspond to the special case of constant functions $\alpha$ and $\beta$. Nevertheless, the proofs generalize to our setting, since the densities $\alpha$ and $\beta$ are assumed to be continuous and therefore, they are locally on $\overline{\Omega_1}$ bounded from below away from zero.
\end{remark}

\begin{lemma} \label{lemmaps}
For each point $x \in \overline{\Omega}_1$ exists a sequence $(u_n)_{n \in \mathbb{N}}$ in $C^{\infty}(\overline{\Omega}) \subset D(L_p)$, $p >1$, that is point separating in $x$.
\end{lemma}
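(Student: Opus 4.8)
The plan is to write down, for a fixed $x \in \overline{\Omega}_1$, one explicit sequence of smooth radial bump functions centred at $x$; no abstract separation machinery is needed. The only structural ingredient is the inclusion $C^\infty(\overline{\Omega}) \subset D(L_p)$, which I would take from Lemma \ref{remgenerator}: for $p$ as in Condition \ref{condint} every $u \in C^2(\overline{\Omega})$ satisfies $u \in L^p(\overline{\Omega};\mu)$ and $L_p u = Lu \in L^p(\overline{\Omega};\mu)$, so $u \in D(L_p)$; in particular $C^\infty(\overline{\Omega}) \subset D(L_p)$.

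Next I would fix a cutoff $\phi \in C^\infty(\mathbb{R})$ with $0 \le \phi \le 1$, $\phi(0) = 1$ and $\phi \equiv 0$ on $[1,\infty)$, and define, for $n \in \mathbb{N}$,
\[ u_n(z) := \phi\big(n\,|z-x|^2\big), \qquad z \in \overline{\Omega}. \]
Since $z \mapsto |z-x|^2 = \sum_{i=1}^{d}(z_i - x_i)^2$ is a polynomial on $\mathbb{R}^d$ and $\phi$ is smooth, each $u_n$ is the restriction to $\overline{\Omega}$ of a $C^\infty(\mathbb{R}^d)$-function, hence $u_n \in C^\infty(\overline{\Omega}) \subset D(L_p)$ for every $n$. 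To verify that $(u_n)_{n \in \mathbb{N}}$ is point separating in $x$ I would argue as follows: $u_n(x) = \phi(0) = 1$ for all $n$; and for any $y \in \overline{\Omega}_1$ with $y \neq x$ one has $|y-x| > 0$, so choosing $n_0 \in \mathbb{N}$ with $n_0 \geq |y-x|^{-2}$ gives $n_0|y-x|^2 \geq 1$, whence $u_{n_0}(y) = \phi\big(n_0|y-x|^2\big) = 0$ while still $u_{n_0}(x) = 1$.

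I do not expect any genuine obstacle here: the argument is an elementary property of Euclidean space and of smooth cutoffs, and the only place where the setting of the paper enters is the membership $C^\infty(\overline{\Omega}) \subset D(L_p)$ supplied by Lemma \ref{remgenerator}. If one preferred to avoid the polynomial $|z-x|^2$, the same reasoning goes through verbatim with $u_n(z) = \phi\big(n\,d_{\text{euc}}(z,x)^2\big)$, or with a countable family $\phi_{a,b}(|z-x|^2)$ indexed by rationals $0 < a < b$; the precise choice is immaterial.
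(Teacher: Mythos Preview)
Your proof is correct and follows essentially the same approach as the paper: both construct, for fixed $x$, a sequence of smooth bump functions supported in shrinking balls around $x$ and restrict them to $\overline{\Omega}$. The paper simply asserts the existence of $\tilde u_n \in C^\infty_c(\mathbb{R}^d)$ with $\tilde u_n(x)=1$ and $\operatorname{supp}(\tilde u_n)\subset B_{1/n}(x)$, while you write down an explicit formula $u_n(z)=\phi(n|z-x|^2)$; the content is identical, and your reference to Lemma~\ref{remgenerator} for the inclusion $C^\infty(\overline{\Omega})\subset D(L_p)$ is appropriate.
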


\begin{proof}
Fix $x \in \overline{\Omega}_1$ and $n \in \mathbb{N}$. Then it is clear that we can find a function $\tilde{u}_n$ in $C^{\infty}_c(\mathbb{R}^d)$ such that $\tilde{u}_n(x)=1$ and $\text{supp}(\tilde{u}_n) \subset B_{\frac{1}{n}}(x)$. Define $u_n:= \tilde{u}_n|_{\overline{\Omega}}$.
\end{proof}

\begin{theorem} \label{main} Assume that Condition \ref{condcont} and Condition \ref{condcomp} are fulfilled. Then there exists a conservative diffusion process
\[ \mathbf{M}=\big( \mathbf{\Omega}, \mathcal{F}, (\mathcal{F}_t)_{t \geq 0}, (\mathbf{X}_t)_{t \geq 0}, (\Theta_t)_{t \geq 0}, (\mathbf{P}_x)_{x \in \overline{\Omega}_1} \big)  \]
with state space $\overline{\Omega}_1$ such that
\begin{align*}
\mathbf{X}_t = x &+ \int_0^t \mathbbm{1}_{\Omega}(\mathbf{X_s}) dB_s + \int_0^t \mathbbm{1}_{\Omega} \frac{1}{2} \nabla \ln \alpha(\mathbf{X}_s) ds \\
&+ \delta \int_0^t \mathbbm{1}_{\Gamma}(\mathbf{X}_s) P(\mathbf{X}_s) dB_s - \delta \int_0^t  \mathbbm{1}_{\Gamma}(\mathbf{X}_s) \frac{1}{2} \kappa(\mathbf{X}_s) n(\mathbf{X}_s) ds \\
&+ \delta \int_0^t  \mathbbm{1}_{\Gamma}(\mathbf{X}_s) \frac{1}{2} \nabla_{\Gamma} \ln \beta (\mathbf{X}_s) ds - \int_0^t \frac{1}{2} \frac{\alpha}{\beta}(\mathbf{X}_s) n(\mathbf{X}_s) ds 
\end{align*}
almost surely under $\mathbf{P}_x$ for every $x \in \overline{\Omega}_1$. Moreover, its Dirichlet form is given by $(\mathcal{E},D(\mathcal{E}))$ on $L^2(\overline{\Omega}_1;\mu)$ and the transition semigroup $(p_t)_{t >0}$ of
$\mathbf{M}$ is $\mathcal{L}^p$-strong Feller, i.e., $p_t (\mathcal{L}^p (\overline{\Omega}_1;\mu)) \subset C(\overline{\Omega}_1)$. In particular, $(p_t)_{t >0}$ it strong Feller, i.e., $p_t (\mathcal{B}_b(\overline{\Omega}_1)) \subset C(\overline{\Omega}_1)$. Furhtermore, $\mathbf{M}$ has a sticky boundary behavior, i.e.,
\begin{align*} \lim_{t \rightarrow \infty} \frac{1}{t} \int_0^t \mathbbm{1}_{\Gamma}(\mathbf{X}_s) ds  >0 
\end{align*} 
$\mathbf{P}_x$-a.s. for every $x \in \overline{\Omega}_1$ such that $x$ is in a component of $\overline{\Omega}_1$ intersecting $\Gamma$.
\end{theorem}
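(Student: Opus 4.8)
The plan is to verify the three sufficient conditions of \cite[Condition 1.3]{BGS13}, invoke \cite[Theorem 1.4]{BGS13} to obtain a process with the claimed Feller properties, and then upgrade the pathwise description of Theorem \ref{thmsolSDE} and the ergodic statement of Corollary \ref{coroergo} from quasi every to every starting point in $\overline{\Omega}_1$. Part (i) of \cite[Condition 1.3]{BGS13} is exactly $\mathrm{cap}_{\mathcal{E}}(\{\varrho=0\})=0$, i.e.\ Condition \ref{condcomp}. Part (iii), a point separating sequence in $D(L_p)$ at each $x\in\overline{\Omega}_1$, is provided by Lemma \ref{lemmaps}. For part (ii), the locally continuous embedding $D(L_p)\hookrightarrow C(\overline{\Omega}_1)$: given $u\in D(L_p)$, write $u=G_1^p f$ with $f:=(1-L_p)u$, so that $\Vert f\Vert_{L^p(\overline{\Omega};\mu)}\le\Vert u\Vert_{D(L_p)}$ (graph norm); Theorem \ref{thmcont} (valid since $p\ge2$, $p>d/2$) then gives $u\in C(\overline{\Omega}_1)$ and, for each $x\in\overline{\Omega}_1$, a neighbourhood $U$ with $\overline{U}\subset\overline{\Omega}_1$ and a constant $C$ with $\sup_{y\in U}|\tilde u(y)|\le C\Vert f\Vert_{L^p(\overline{\Omega};\mu)}\le C\Vert u\Vert_{D(L_p)}$, which is the required estimate.

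By \cite[Theorem 1.4]{BGS13} there is then a diffusion $\mathbf{M}$ with state space $\overline{\Omega}$ that leaves $\overline{\Omega}_1$ invariant $\mathbf{P}_x$-a.s.\ for $x\in\overline{\Omega}_1$, is associated with $(\mathcal{E},D(\mathcal{E}))$, has $\mathcal{L}^p$-strong Feller transition semigroup $(p_t)_{t>0}$, and solves the $(L_p,D(L_p))$-martingale problem for every $x\in\overline{\Omega}_1$; conservativeness follows from recurrence of $(\mathcal{E},D(\mathcal{E}))$, and restricting the state space to $\overline{\Omega}_1$ yields the process claimed in the theorem. By $\mu$-uniqueness (Theorem \ref{thmdiff}) this process agrees, for quasi every starting point, with the one of Section \ref{secSDE}, so that Theorem \ref{thmsolSDE} and Corollary \ref{coroergo} apply to it. Since $\mu$ is finite, $\mathcal{B}_b(\overline{\Omega}_1)\subset\mathcal{L}^p(\overline{\Omega}_1;\mu)$, so the $\mathcal{L}^p$-strong Feller property implies the strong Feller property; moreover, for a $\mu$-null Borel set $B$ one has $\mathbbm{1}_B=0$ in $\mathcal{L}^p(\overline{\Omega}_1;\mu)$, hence its continuous version $p_t\mathbbm{1}_B$ vanishes identically, which gives $p_t(x,\cdot)\ll\mu$ for every $x\in\overline{\Omega}_1$ and $t>0$.

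To obtain the SDE at an arbitrary starting point, note that $C^\infty(\overline{\Omega})\subset D(L_p)$ and, by Lemma \ref{remgenerator}, $L_p g=Lg$ is given by (\ref{generator}) for $g\in C^2(\overline{\Omega})$; applying the martingale problem to $g=f|_{\overline{\Omega}}$ and using $f(\mathbf{X}_t)=g(\mathbf{X}_t)$ (since $\mathbf{X}$ stays in $\overline{\Omega}$) shows that for every $f\in C^\infty_c(\mathbb{R}^d)$ the process $f(\mathbf{X}_t)-f(\mathbf{X}_0)-\int_0^t\big(\tfrac12\,\text{Tr}(A\nabla^2 f)+(b,\nabla f)\big)(\mathbf{X}_s)\,ds$ is an $\mathcal{F}_t$-martingale under $\mathbf{P}_x$ for \emph{every} $x\in\overline{\Omega}_1$. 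Lemma \ref{lemSV} then yields, for each such $x$, a $d$-dimensional Brownian motion with $d\mathbf{X}_t=A(\mathbf{X}_t)\,dB_t+b(\mathbf{X}_t)\,dt$, and the It{\^o}--Stratonovich transformation rule together with Lemma \ref{lemcurv} (the identity $(P\nabla)^tP=-\kappa n$) rewrites this, exactly as in the proof of Theorem \ref{thmsolSDE}, into the SDE displayed in the statement; this establishes the pathwise representation for every $x\in\overline{\Omega}_1$.

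For the sticky boundary behaviour, fix $G\in\mathcal{C}$ with $G\cap\Gamma\neq\emptyset$ and let $N$ be a properly exceptional set (of $\mathcal{E}$-capacity zero, hence $\mu$-null) outside of which Corollary \ref{coroergo} holds, so that $\mathbf{P}_y(A)=1$ for $y\in G\setminus N$, where $A:=\{\lim_{t\to\infty}\tfrac1t\int_0^t\mathbbm{1}_\Gamma(\mathbf{X}_s)\,ds\text{ exists and is}>0\}$. The event $A$ satisfies $\Theta_{t_0}^{-1}A=A$ for every $t_0>0$, because replacing $\mathbf{X}_s$ by $\mathbf{X}_{s+t_0}$ changes the Ces{\`a}ro average only by terms vanishing as $t\to\infty$. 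For an arbitrary $x\in G$, continuity of the paths inside the relatively clopen subset $G$ of $\overline{\Omega}_1$ gives $\mathbf{X}_{t_0}\in G$ $\mathbf{P}_x$-a.s., while $p_{t_0}(x,\cdot)\ll\mu$ gives $\mathbf{X}_{t_0}\notin N$ $\mathbf{P}_x$-a.s., so by the Markov property $\mathbf{P}_x(A)=\mathbf{E}_x[\mathbf{P}_{\mathbf{X}_{t_0}}(A)]=1$. Since $G\cap\Gamma$ is a non-empty open subset of $\Gamma$ and $\beta>0$ $\sigma$-a.e., $\mu(G\cap\Gamma)=\int_{G\cap\Gamma}\beta\,d\sigma>0$, so the limit is strictly positive. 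The substantive ingredient here is Theorem \ref{thmcont}, the interior-and-boundary De Giorgi--Nash--Moser continuity estimate for the Wentzell-type resolvent equation (\ref{resolventeq2}); granting it, the only other point requiring care, handled above, is the passage from quasi every to every starting point via absolute continuity of the transition kernels and the shift-invariance of the occupation-time event.
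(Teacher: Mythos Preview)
Your proof is correct and follows essentially the same approach as the paper: verify the hypotheses of \cite[Theorem 1.4]{BGS13} via Theorem \ref{thmcont} and Lemma \ref{lemmaps}, then pass from the $(L_p,D(L_p))$-martingale problem to the SDE exactly as in Section \ref{secSDE}, and finally upgrade the ergodic statement using absolute continuity of the transition kernels. Your treatment of the last step is in fact more explicit than the paper's, which simply invokes the absolute continuity condition \cite[(4.2.9)]{FOT94}; your shift-invariance plus Markov property argument spells out the same mechanism.
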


\begin{proof}
First, we have to check the assumptions of \cite[Theorem 1.4]{BGS13}, namely that $D(L_p) \hookrightarrow C(\overline{\Omega}_1)$, that the embedding is locally continuous and the point separating property. It holds $D(L_p)= G_{\gamma}^p L^p(\overline{\Omega};\mu)$ and hence, we have $D(L_p) \hookrightarrow C(\overline{\Omega}_1)$ and moreover, for $u=G_1^p f \in D(L_p)$ it holds locally 
\begin{align*} \sup_{y \in U} | \tilde{u}(y) | &\leq C_4 \Vert f \Vert_{L^p(\overline{\Omega};\mu)} \\
&=C_4 \Vert (1-L_p)u \Vert_{L^p(\overline{\Omega};\mu)} \leq C_4 (\Vert u \Vert_{L^p(\overline{\Omega};\mu)} + \Vert L_p u \Vert_{L^p(\overline{\Omega};\mu)}) = C_4 \Vert u \Vert_{D(L_p)}.
\end{align*}
The existence of a point spearating sequence for each point $x \in \overline{\Omega}_1$ follows by Lemma \ref{lemmaps}. This assures the existence of a process $\mathbf{M}$ with state space $\overline{\Omega}$ as stated at the beginning of this section such that $\overline{\Omega}_1$ is invariant for all starting points in $\overline{\Omega}_1$ and its transition semigroup is $\mathcal{L}^p$-strong Feller. In particular, the process is a solution of the given SDE for every starting point $x \in \overline{\Omega}_1$. This follows by the fact that $\mathbf{M}$ solves the $(L_p,D(L_p))$ martingale problem and $L_p$ is given as in Proposition \ref{propgen} for functions in $C^2(\overline{\Omega})$ (see also Remark \ref{remgenerator}). Since $\mathcal{B}_b(\overline{\Omega}) \subset L^p(\overline{\Omega};\mu)$, it follows that the process is also strong Feller in the sense that the transition semigroup maps $\mathcal{B}_b(\overline{\Omega})$ into $C(\overline{\Omega}_1)$. By admitting only starting points in $\overline{\Omega}_1$ and invariance, we obtain a process $\mathbf{M}$ as stated. $\mathcal{L}^p(\overline{\Omega}_1; \mu) \tilde{=} \mathcal{L}^p(\overline{\Omega};\mu) \subset C(\overline{\Omega}_1)$ and therefore, the semigroup is ($\mathcal{L}^p$-)strong Feller. The associated Dirichlet form is given by $(\mathcal{E},D(\mathcal{E}))$ on $L^2(\overline{\Omega}_1;\mu)$ by Definition \ref{defpart} and the following remark on parts of processes. In particular, the absolute continuity condition given in \cite[(4.2.9)]{FOT94} is fulfilled and therefore, the ergodicity result holds accordingly for every starting point $x \in \overline{\Omega}_1$, since the required properties directly transfer from the $L^2(\overline{\Omega};\mu)$ to the $L^2(\overline{\Omega}_1;\mu)$ setting.
\end{proof}

\begin{remark}
In principle, it is also possible to define the Dirichlet form $(\mathcal{E},D(\mathcal{E}))$ on $L^2(\overline{\Omega}_1;\mu)$ in the first place. Then, proving elliptic regularity of the associated resolvent yields a strong Feller process associated to $(\mathcal{E},D(\mathcal{E}))$ without removing a set of capacity zero. In this case, $\{ \varrho=0\}$ is part of the boundary of the state space which requires to use Dirichlet boundary conditions on $\{ \varrho=0\}$. Thus, the assumption that $\{ \varrho=0\}$ is of capacity zero is in this case replaced by conservativeness. Admittedly, this procedure also allows the construction of a non-conservative solution which corresponds to the case of non-zero capacity.
\end{remark}

\subsection*{Acknowledgment}

R.~Vo{\ss}hall gratefully acknowledges financial support in the form of a fellowship of the German state Rhineland-Palatine.

\bibliographystyle{alpha}
\bibliography{biblio}

\begin{thebibliography}{FGV14}

\bibitem[Alt06]{Alt06}
H.W. Alt.
\newblock {\em Lineare Funktionalanalysis: Eine anwendungsorientierte
  Einf\"uhrung}.
\newblock Springer-Verlag, 2006.

\bibitem[AR95]{AR95}
S.~Albeverio and M.~R\"ockner.
\newblock Dirichlet form methods for uniqueness of martingale problems and
  applications.
\newblock In {\em Stochastic analyis}, volume~57 of {\em Proc. Symp. Pure
  Math.}, pages 513--528. Amer. Math. Soc., 1995.

\bibitem[Bau81]{Bau81}
H.~Bauer.
\newblock {\em Probability theory and elements of measure theory}.
\newblock Academic Press Inc., London, 1981.

\bibitem[BG14]{BG12}
B.~Baur and M.~Grothaus.
\newblock Construction and strong feller property of distorted elliptic
  diffusion with reflecting boundary.
\newblock {\em Potential Anal.}, 40(4):391--425, 2014.

\bibitem[BGS13]{BGS13}
B.~Baur, M.~Grothaus, and P.~Stilgenbauer.
\newblock Construction of ${L}^p$-strong {F}eller {P}rocesses via {D}irichlet
  {F}orms and {A}pplications to {E}lliptic {D}iffusions.
\newblock {\em Potential Anal.}, 38(4):1233--1258, 2013.

\bibitem[Car09]{Car09}
R.K. Card.
\newblock {\em Brownian Motion with Boundary Diffusion}.
\newblock PhD thesis, University of Washington, 2009.

\bibitem[CF11]{CF11}
Z.~Chen and M.~Fukushima.
\newblock {\em Symmetric Markov Processes, Time Change, and Boundary Theory},
  volume~35 of {\em London Mathematical Society Monographs}.
\newblock Princeton Univ. Press, Princeton and Oxford, 2011.

\bibitem[EP12]{EP12}
H.J. Engelbert and G.~Peskir.
\newblock Stochastic {D}ifferential {E}quations for {S}ticky {B}rownian
  {M}otion.
\newblock Research Report~5, Prob. Statist. Group Manchester, 2012.
\newblock pages 28 pp.

\bibitem[Fel52]{Fel52}
W.~Feller.
\newblock The parabolic differential equations and the associated semi-groups
  of transformations.
\newblock {\em Ann. of Math.}, 55(2):468--519, 1952.

\bibitem[FGV14]{FGV14}
T.~Fattler, M.~Grothaus, and R.~Vo{\ss}hall.
\newblock Construction and analysis of a sticky reflected distorted brownian
  motion.
\newblock {\em To appear in Annales de l'Institut Henri Poincar{\'e}}, 2014.

\bibitem[FOT11]{FOT94}
M.~Fukushima, Y.~Oshima, and M.~Takeda.
\newblock {\em Dirichlet Forms and Symmetric Markov Processes}.
\newblock Walter de Gruyter, 2011.

\bibitem[Gra88]{Gra88}
C.~Graham.
\newblock The martingale problem with sticky reflection conditions, and a
  system of particles interacting at the boundary.
\newblock {\em Ann. Inst. Henri Poincar{\'e}}, 24(1):45--72, 1988.

\bibitem[GT01]{GT77}
D.~Gilbarg and N.S. Trudinger.
\newblock {\em Elliptic partial differential equations of second order. Reprint
  of the 1998 ed.}
\newblock Classics in Mathematics. Springer, 2001.

\bibitem[GV14]{GV14}
M.~Grothaus and R.~Vo{\ss}hall.
\newblock Strong feller properties and uniqueness of sticky reflected distorted
  brownian motion.
\newblock Preprint, 2014.
\newblock arXiv:1410.6040.

\bibitem[HL97]{HL97}
Q.~Han and F.~Lin.
\newblock {\em Elliptic Partial Differential Equations}, volume~1 of {\em
  Courant Lecture Notes in Mathematics}.
\newblock Amer. Math. Soc., 1997.

\bibitem[Hsu02]{Hsu02}
E.~Hsu.
\newblock {\em Stochastic analysis on manifolds}, volume~38 of {\em Graduate
  Studies in Mathematics}.
\newblock Amer. Math. Soc., 2002.

\bibitem[HT94]{HT94}
W.~Hackenbroch and A.~Thalmaier.
\newblock {\em Stochastische Analysis}.
\newblock Teubner, 1994.

\bibitem[IW89]{IW89}
N.~Ikeda and S.~Watanabe.
\newblock {\em Stochastic differential equations and diffusion processes}.
\newblock North-Holland, 1989.

\bibitem[Kal97]{Kal97}
O.~Kallenberg.
\newblock {\em Foundations of Modern Probability}.
\newblock Probability and its Applications. Springer, 1997.

\bibitem[LS96]{LS96}
V.~Liskevich and Y.~Semenov.
\newblock Some problems on markov semigroups.
\newblock In {\em Schr\"odinger operators, Markov semigroups, wavelet analysis,
  operator algebras}, number~11 in Mathematical Topics, pages 163--217.
  Akademie Verlag, 1996.

\bibitem[MR92]{MR92}
Z.-M. Ma and M.~R\"ockner.
\newblock {\em Introduction to the Theory of (Non-Symmetric) Dirichlet Forms}.
\newblock Springer, 1992.

\bibitem[Nit11]{Nit11}
R.~Nittka.
\newblock Regularity of solutions to linear second order elliptic and parabolic
  boundary value problems on lipschitz domains.
\newblock {\em J. Differential Equations}, 251:860--880, 2011.

\bibitem[Sta63]{Sta63}
G.~Stampacchia.
\newblock {\'E}quations elliptiques du second ordre {\`a} coefficients
  discontinus.
\newblock In {\em S{\'e}minaire Jean Leray. Sur les {\'e}quations aux
  d{\'e}riv{\'e}es partielles}, volume~3, pages 1--77. Coll{\`e}ge de France,
  1963.

\bibitem[Stu94]{Stu94}
K.~T. Sturm.
\newblock Analysis on local {D}irichlet spaces {I}. {R}ecurrence,
  conservativeness and ${L^p}$-{L}iouville properties.
\newblock {\em reine angew. Math.}, 456:173--196, 1994.

\bibitem[Stu95]{Stu95}
K.~T. Sturm.
\newblock Sharp estimates for capacities and applications to symmetric
  diffusions.
\newblock {\em Probab. Theory Related Fields}, 103(1):73--89, 1995.

\bibitem[Tay11]{Tay11}
M.E. Taylor.
\newblock {\em Partial Differential Equations I}, volume 115 of {\em Applied
  Mathematical Sciences}.
\newblock Springer, 2011.

\bibitem[Tru03]{Tru03}
G.~Trutnau.
\newblock Skorokhod decomposition of reflected diffusions on bounded
  {L}ipschitz domains with singular non-reflection part.
\newblock {\em Probab. Theory Related Fields}, 127(4):455--495, 2003.

\bibitem[VV03]{VV03}
H.~Vogt and J.~Voigt.
\newblock Wentzell boundary conditions in the context of {D}irichlet forms.
\newblock {\em Adv. Difference Equ.}, 8(7):821--842, 2003.

\bibitem[War12]{War12}
M.~Warma.
\newblock Regularity and well-posedness of some quasi-linear elliptic and
  parabolic problems with nonlinear general wentzell boundary conditions on
  nonsmooth domains.
\newblock {\em Nonlinear Analysis}, 75:5561--5588, 2012.

\bibitem[War13]{War13}
M.~Warma.
\newblock Parabolic and elliptic problems with general {W}entzell boundary
  condition on {L}ipschitz domains.
\newblock {\em Comm. Pure Appl. Anal.}, 12(5):1881--1905, 2013.

\bibitem[Wen59]{Wen59}
A.D. Wentzell.
\newblock On boundary conditions for multi-dimensional diffusion processes.
\newblock {\em Theory Prob. Appl.}, 4(2):164--177, 1959.

\end{thebibliography}

\end{document}